\documentclass[oneside,english]{amsart}
\usepackage[T1]{fontenc}
\usepackage[latin9]{inputenc}
\usepackage{amsthm}
\usepackage{amstext}
\usepackage{amssymb}
\usepackage{esint}
\usepackage{amscd}
\usepackage{color}
\makeatletter
\newtheorem*{theoremA}{Theorem A}
\newtheorem*{theoremB}{Theorem B}
\numberwithin{equation}{section}
\numberwithin{figure}{section}
\theoremstyle{plain}
\newtheorem{theorem}{\protect\theoremname}[section]
\newtheorem{proposition}[theorem]{\protect\propositionname}
\theoremstyle{plain}
\theoremstyle{definition}
\newtheorem{definition}[theorem]{\protect\definitionname}
\theoremstyle{plain}
\newtheorem{lemma}[theorem]{\protect\lemmaname}
\newtheorem{corollary}[theorem]{\protect\corollaryname}
\theoremstyle{plain}
\newtheorem{remark}[theorem]{\protect\remarkname}
\theoremstyle{plain}
\makeatother

\usepackage{babel}
\providecommand{\definitionname}{Definition}
\providecommand{\lemmaname}{Lemma}
\providecommand{\theoremname}{Theorem}
\providecommand{\corollaryname}{Corollary}
\providecommand{\remarkname}{Remark}
\providecommand{\propositionname}{Proposition}
	
\DeclareMathOperator{\loc}{loc}

\DeclareMathOperator{\dist}{dist}

\DeclareMathOperator{\R}{\mathbb{R}}

\begin{document}

\title[Neumann-to-Steklov limits]{Weighted Neumann-to-Steklov limits for nonlinear eigenvalues and trace constants}

\author{Alexander Menovschikov}
\thanks{This work was supported by the Russian Science Foundation under grant No~.25-71-00064, https://rscf.ru/en/project/25-71-00064/} 

\begin{abstract}
We study a nonlinear Neumann-to-Steklov limit generated by a family of
interior weights concentrating at the boundary.  On a class of admissible
possibly irregular domains obtained from the unit ball by trace-compatible
Sobolev homeomorphisms, we consider the first nontrivial weighted
\((p,q)\)-Neumann eigenvalue with respect to a concentrating bulk weight
\(\gamma_a\).  We prove that, as \(a\to0\), these eigenvalues converge to the
corresponding weighted \((p,q)\)-Steklov eigenvalue with boundary weight
\(\beta\).  Moreover, normalized minimizers converge, up to subsequences,
strongly in \(W^{1,p}\) to Steklov minimizers.  Equivalently, the best constants
in the weighted Poincar\'e inequalities converge to the best constants in the
weighted trace inequalities; in fact, a quantitative convergence estimate is
obtained in the subcritical trace range.
\end{abstract}
\maketitle

\footnotetext{\textbf{Key words and phrases:} Weighted Sobolev embeddings; Neumann-to-Steklov convergence;
nonlinear eigenvalue problems; concentrating measures; sharp trace constants; composition operators } 
\footnotetext{\textbf{MSC 2020:} Primary 46E35; Secondary 35P30, 35J92, 35B40, 35B25, 30C65}

\section{Introduction}

The Steklov problem and its variational counterparts occupy a central place in
spectral geometry and in the theory of trace inequalities; for general background
we refer to the surveys of Girouard--Polterovich~\cite{GirouardPolterovich2017}
and Colbois--Girouard--Gordon--Sher~\cite{ColboisGirouardGordonSher2024}. In
this paper we study a nonlinear Neumann-to-Steklov limit generated by a family of
interior weights concentrating at the boundary. More precisely, we consider the
first nontrivial weighted \((p,q)\)-Neumann eigenvalue \(\Lambda^N_{p,q}(\gamma_a)\) for the \(p\)-Laplace operator \(\Delta_p\) associated with a
bulk weight \(\gamma_a\), where the parameter \(a>0\) controls the concentration
of mass near \(\partial\Omega\). As \(a\to0\), the corresponding weighted measures
accumulate on the boundary, and one naturally expects the Neumann problem to
approach a weighted \((p,q)\)-Steklov problem for the \(p\)-Laplace and eigenvalues \(\Lambda^N_{p,q}(\gamma_a)\) converge to the first non-trivial Steklov eigenvalue \(\Lambda^{St}_{p,q}(\beta)\). Our goal is
to justify this limit in a weighted nonlinear setting and to express it
equivalently in terms of the first nonlinear eigenvalues and of the best
constants \(C^N_{p,q}(\gamma_a)\) and \(C^{St}_{p,q}(\beta)\) in the corresponding weighted Poincar\'e and trace inequalities.

Several neighboring lines of research clarify different aspects of this picture,
but mostly in settings that remain essentially disjoint. In the linear smooth
case, direct Neumann-to-Steklov limits generated by concentration of mass near
the boundary were studied by Lamberti and Provenzano~\cite{LambertiProvenzano2017}
and, at a finer asymptotic level, by Dalla Riva and
Provenzano~\cite{DallaRivaProvenzano2018}. On the variational side, the
convergence of bulk Sobolev-type constants to trace constants was investigated,
in a form particularly close in spirit to the present work, by Arrieta,
Rodr\'iguez-Bernal, and Rossi~\cite{ArrietaRodriguezBernalRossi2008}; see also
\cite{BonderRossi2002,BonderRossiFerreira2003}. A conceptually related
measure-theoretic framework for the linear Laplace operator and Steklov eigenvalues, based on Radon
measures and continuity of variational spectra, was developed by Girouard,
Karpukhin, and Lagac\'e~\cite{GirouardKarpukhinLagace2021}. On the other hand,
for singular domains and Jacobian-induced weights, the natural analytic language
is that of composition operators on Sobolev spaces and induced weighted
embeddings, as in the work of Gol'dshtein and
Ukhlov~\cite{GoldshteinUkhlov2009}; in this direction, weighted Neumann and
weighted Steklov problems on outward cuspidal domains have recently been studied
in~\cite{GarainGoldshteinUkhlov2025,MenovschikovUkhlov2026,LambertiUkhlov2026}.
The main point of the present paper is to connect these themes. We establish a nonlinear weighted Neumann-to-Steklov limit for
Jacobian-induced concentrating measures on admissible possibly singular domains,
together with quantitative convergence of the corresponding sharp constants and
strong convergence of minimizers.

Our setting is based on a transfer-from-the-ball construction. Namely, we assume
that \(\Omega\) is the image of the unit ball under a trace-compatible Sobolev
homeomorphism \(\varphi:\overline B\to\overline\Omega\), so that the
corresponding composition operator transfers Sobolev and trace information from
the model domain \(B\) to \(\Omega\). Starting from a bounded boundary weight
\(\alpha\) on \(\partial B\), we define a family of radial concentrating
densities
\[
\rho_a(x)=\frac{n}{a}|x|^{n/a-n},
\qquad x\in B,
\]
and then transport them to \(\Omega\) through \(\varphi\), which produces the
bulk weights \(\gamma_a\) in \(\Omega\) and the limiting boundary weight
\(\beta\) on \(\partial\Omega\). This framework includes, in particular,
bilipschitz images of the ball and standard outward cuspidal domains, while
allowing the geometry of \(\Omega\) to enter the problem through Jacobian
factors rather than through classical smoothness assumptions on \(\partial\Omega\).

Our main results concern quantitative convergence of the corresponding sharp
constants and variational levels, as well as compactness and strong convergence
of normalized minimizers. The exact statements are provided in Section~5.

\begin{theoremA}
\label{thm:quantitative-neumann-to-steklov-intro}
Let \(1<p<n\), \(1<q<\frac{p(n-1)}{n-p}\), and let
\(\Omega\subset\mathbb R^n\) be admissible.
Then for every \(s\in(0,\delta_{p,q})\) there exists a constant \(C_s>0\) such that
\[
\left|
C^N_{p,q}(\gamma_a)-C^{St}_{p,q}(\beta)
\right|
\le
C_s a^s
\qquad\text{for every }a\in(0,1].
\]
Consequently,
\[
C^N_{p,q}(\gamma_a)\to C^{St}_{p,q}(\beta)
\qquad\text{as }a\to0,
\]
In particular,
\[
\left|
\Lambda^N_{p,q}(\gamma_a)-\Lambda^{St}_{p,q}(\beta)
\right|
\le
C_s^{\,\Lambda} a^s, \qquad
\Lambda^N_{p,q}(\gamma_a)\to \Lambda^{St}_{p,q}(\beta)
\quad\text{as }a\to0.
\]
\end{theoremA}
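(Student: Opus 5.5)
The argument reduces everything to the ball by composition with $\varphi$ and then compares the radial bulk measures $\rho_a\,dx$ with the boundary measure on $\partial B$. Write the constants as Rayleigh-type quantities:
\[
C^N_{p,q}(\gamma_a)=\sup_{f}\frac{\inf_c\|f-c\|_{L^q(\Omega,\gamma_a)}}{\|\nabla f\|_{L^p(\Omega)}},\qquad
C^{St}_{p,q}(\beta)=\sup_{f}\frac{\inf_c\|f-c\|_{L^q(\partial\Omega,\beta)}}{\|\nabla f\|_{L^p(\Omega)}}.
\]
Here the weights are defined so that $\int_\Omega |f|^q\gamma_a\,dy=\int_B |f\circ\varphi|^q\rho_a\alpha(x/|x|)\,dx$ and $\int_{\partial\Omega}|f|^q\beta\,d\sigma=\int_{\partial B}|f\circ\varphi|^q\alpha\,d\sigma$. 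Admissibility, i.e. trace compatibility of the composition operator $\varphi^*:L^{1,p}(\Omega)\to W^{1,r}(B)$ with $r$ chosen so that the trace embedding into $L^q(\partial B)$ is still subcritical, transfers gradient control to $B$. The key step is therefore a comparison estimate on $B$ for $g=f\circ\varphi$:
\[
\Bigl|\,\|g\|_{L^q(B,\rho_a\alpha)}-\|g\|_{L^q(\partial B,\alpha)}\Bigr|\le C a^{s}\|g\|_{W^{1,r}(B)},\qquad s<\delta_{p,q},
\]
where $\delta_{p,q}$ is the subcritical gap exponent quantifying the room between $q$ and the critical trace exponent.

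To prove this comparison, use polar coordinates. Note that $\rho_a\,dx=(n/a)t^{n/a-1}\,dt\,d\sigma$, which is a probability measure in $t\in(0,1)$ for each direction $\omega$, concentrating at $t=1$. Write $|g(t\omega)|^q-|g(\omega)|^q$ via the fundamental theorem along rays, or more robustly by an interpolation argument:
\[
\int_{\partial B}\bigl||g(t\cdot)|^q-|g|^q\bigr|\,d\sigma\lesssim (1-t)^{\theta}\|g\|^{q}_{W^{1,r}(B)}.
\]
This follows from fractional (Besov) regularity of the trace map $t\mapsto g(t\cdot)\in L^q(\partial B)$, which holds with Hölder exponent $\theta$ up to the gap $\delta_{p,q}$. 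Integrating against $(n/a)t^{n/a-1}$ gives $\int_0^1(1-t)^\theta (n/a)t^{n/a-1}dt\sim a^\theta$. Boundedness of $\alpha$ handles the weight, and the difference of $q$-th powers converts to a difference of norms. Doing this for the $L^q$ norm inside the $\inf_c$ gives the same bound uniformly in $c$ after replacing $g$ by $g-c$, since the mean-zero normalization only costs a Poincaré constant.

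With the comparison in hand, for each admissible $f$ the two Rayleigh numerators differ by at most $Ca^s\|\nabla f\|_{L^p(\Omega)}$, using $\|g-c\|_{W^{1,r}}\lesssim\|\nabla f\|_{L^p}$ for the optimal $c$ together with the composition bound and Poincaré on $B$. Taking suprema over both families gives $|C^N_{p,q}(\gamma_a)-C^{St}_{p,q}(\beta)|\le C_sa^s$. Eigenvalues satisfy $\Lambda=C^{-p}$, and $C^{St}>0$ is finite by the compact trace embedding, so for small $a$ the constants stay in a compact subinterval of $(0,\infty)$. The map $x\mapsto x^{-p}$ is Lipschitz there, which yields the $\Lambda$ estimate; for $a$ bounded away from $0$ the estimate is trivial after enlarging the constant, using a uniform bound on $C^N$.

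The main obstacle is the fractional-in-$t$ estimate for $t\mapsto \|g(t\cdot)\|_{L^q(\partial B)}$ with an exponent matching $\delta_{p,q}$, uniformly for $g$ in the image of the composition operator. I would first try to obtain it from the embedding $W^{1,r}(B)\subset B^{1-1/r}_{r,r}$, combined with a Hölder step and the subcritical margin, and only if that loses too much switch to a direct one-dimensional Hardy-type argument along rays.
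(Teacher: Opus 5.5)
Your route is genuinely different from the paper's and can be completed. As written, however, its central estimate is only asserted and several steps are misstated.

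The paper proves the ball estimate $\|v-\mathcal R[T_Bv]\|_{L^q(B,\mu_a)}\le C_s a^s\|\nabla v\|_{L^p(B)}$ by a splitting. With $v_0$ the mean-free part of $v$, it writes $v_0-\mathcal R[Tv_0]=(v_0-\mathcal P[Tv_0])+(\mathcal P[Tv_0]-\mathcal R[Tv_0])$.
The $W^{1,p}_0$ part is handled by Hardy's inequality (interpolated with Sobolev when $q>p$) against $\sup_B d^m\rho_a\lesssim a^{m-1}$. The harmonic part is handled by the Poisson-semigroup rate $(1-r)^s$ in $W^{s,q}(\partial B)$ together with $W^{1-1/p,p}(\partial B)\hookrightarrow W^{s,q}(\partial B)$; this is where the restriction $s<\delta_{p,q}$ enters. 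The seminorm comparison then follows from the triangle inequality and $\|\mathcal R[h]\|_{L^q(B,\mu_a)}=\|h\|_{L^q(\partial B,\alpha)}$.

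You instead integrate a ray-wise modulus of continuity of $t\mapsto g(t\,\cdot)$ against the radial probability density $\frac na t^{n/a-1}\,dt$. This bypasses the Dirichlet solution operator, the Poisson semigroup and Hardy's inequality, and, carried out as below, even reaches the endpoint $s=\delta_{p,q}$. The paper's splitting, in turn, isolates a boundary-layer lemma for $W^{1,p}_0(B)$, on which its remark about other concentrating families is built.

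Here is what you have to supply.

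(i) The key estimate itself. For $t\in[1/2,1)$, the fundamental theorem of calculus along rays and H\"older give $\|g(t\,\cdot)-T_Bg\|_{L^p(\partial B)}\le C(1-t)^{1-1/p}\|\nabla g\|_{L^p(B)}$. Trace (Besov) regularity only yields a bound on $g(t\,\cdot)$ in $L^{p(n-1)/(n-p)}(\partial B)$ uniform in $t$, via the critical trace embedding for the rescaled function. Interpolating between the two, the exponent of $1-t$ in $L^q(\partial B)$ is exactly $1-\frac np+\frac{n-1}q$ for $q\ge p$, and $1-\frac1p$ for $q\le p$, i.e.\ $\delta_{p,q}$.

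(ii) Estimate $g(t\,\cdot)-T_Bg$ rather than $|g(t\,\cdot)|^q-|g|^q$, and use Minkowski to bound $\|g-\mathcal R[T_Bg]\|_{L^q(B,\mu_a)}$. A bound on a difference of $q$-th powers only gives $a^{\theta/q}$ for the difference of norms of a single function (e.g.\ when $T_Bg=0$). So your functionwise claim about the Rayleigh numerators does not follow as written; it can be recovered only at the level of the constants, using $C^{St}_{p,q}(\beta)>0$.

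(iii) Uniformity in $c$ comes from the invariance of $g-\mathcal R[T_Bg]$ under $g\mapsto g-c$. A right-hand side $\|g-c\|_{W^{1,r}(B)}$ is not uniform in $c$, and the bulk and boundary optimal constants are different.

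(iv) Your pointwise-in-$t$ bound fails near $t=0$: for $g=|x|^{-\gamma}$ with small $\gamma>0$ the sphere restrictions blow up. So split off $B_{1/2}$, where $\rho_a\le\frac na 2^{n-n/a}$, and use $q<p^*$ there.

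(v) Take $r=p$. Weak $p$-quasiconformality gives $\|\nabla(u\circ\varphi)\|_{L^p(B)}\le K_p(\varphi)\|\nabla u\|_{L^p(\Omega)}$, whereas another $r$ would change the exponent to $\delta_{r,q}$.

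Finally, extending the $\Lambda$ bound to $a$ away from $0$ requires a uniform \emph{lower} bound on $C^N_{p,q}(\gamma_a)$ there. The paper states that bound only for small $a$.
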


\begin{theoremB}
\label{thm:convergence-of-minimizers-intro}
Let \(1<p<n\), \(1<q<\frac{p(n-1)}{n-p}\), and let
\(\Omega\subset\mathbb R^n\) be admissible. Let \(a_k\to0\), and let
\(u_k\) be normalized minimizers for
\(\Lambda^N_{p,q}(\gamma_{a_k})\). Then, after passing to a subsequence,
\[
u_k\to u
\qquad\text{strongly in }W^{1,p}(\Omega),
\]
where \(u\) is a normalized minimizer for
\(\Lambda^{St}_{p,q}(\beta)\).
\end{theoremB}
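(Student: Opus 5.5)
Theorem B will follow from Theorem A together with a uniform $W^{1,p}$ bound and a transfer-to-the-ball compactness argument. We normalize minimizers so that
\[
\int_\Omega |u_k|^q\gamma_{a_k}\,dx=1
\quad\text{and}\quad
\int_\Omega |u_k|^{q-2}u_k\gamma_{a_k}\,dx=0,
\]
so that $\|\nabla u_k\|_{L^p}^p=\Lambda^N_{p,q}(\gamma_{a_k})$. This quantity is bounded by Theorem A.

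\textbf{Step 1: a uniform $L^p$ bound.} We first control $\|u_k\|_{L^p(\Omega)}$. Pulling back by $\varphi$, the function $v_k=u_k\circ\varphi$ lies in $W^{1,p}(B)$, and its gradient is controlled through the composition operator. We then use a Poincaré inequality relative to the weighted mean, uniform in $a\in(0,1]$. It comes from the elementary radial identity
\[
\int_B |v|^q\rho_a\,dx\to\int_{\partial B}|v|^q\,d\sigma\quad(\text{times } \alpha\text{-factors}),
\]
with an error $O(a^s\|v\|_{W^{1,p}}^q)$ in the subcritical range. Presumably this is the same estimate that underlies Theorem A, and I would reuse it. If a uniform bound failed, a rescaled sequence would yield a nonconstant limit with zero gradient, which is a contradiction.

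\textbf{Step 2: passage to the limit.} With $\|u_k\|_{W^{1,p}}$ bounded, we extract $u_k\rightharpoonup u$ weakly in $W^{1,p}(\Omega)$. By the trace-compatible composition structure and compactness of the trace $W^{1,p}(B)\to L^q(\partial B)$ for $q<p(n-1)/(n-p)$, we also get strong convergence of the traces in $L^q(\partial\Omega,\beta)$. The key estimate is
\[
\Big|\int_\Omega |u_k|^q\gamma_{a_k}-\int_{\partial\Omega}|u|^q\beta\Big|\le \Big|\int_\Omega |u_k|^q\gamma_{a_k}-\int_{\partial\Omega}|u_k|^q\beta\Big|+\Big|\int_{\partial\Omega}(|u_k|^q-|u|^q)\beta\Big|.
\]
The first term is $\le C a_k^s\|u_k\|^q_{W^{1,p}}\to0$ by the quantitative bulk-to-boundary estimate from Step 1. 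The second term tends to $0$ by trace compactness. Treating $|u|^{q-2}u$ the same way gives the constraints $\int_{\partial\Omega}|u|^q\beta=1$ and $\int_{\partial\Omega}|u|^{q-2}u\beta=0$, so $u$ is admissible for $\Lambda^{St}_{p,q}(\beta)$.

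\textbf{Step 3: minimality and strong convergence.} Weak lower semicontinuity combined with Theorem A gives
\[
\|\nabla u\|_p^p\le\liminf\|\nabla u_k\|_p^p=\lim\Lambda^N_{p,q}(\gamma_{a_k})=\Lambda^{St}_{p,q}(\beta)\le\|\nabla u\|_p^p.
\]
Hence $u$ is a minimizer and $\|\nabla u_k\|_p\to\|\nabla u\|_p$. Uniform convexity of $L^p$ (Radon–Riesz) then upgrades $\nabla u_k\rightharpoonup\nabla u$ to strong convergence. For the $L^p$ part, we use the compact embedding $W^{1,p}(\Omega)\hookrightarrow L^p(\Omega)$ on admissible domains, obtained via the composition operator from the ball. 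Together these give $u_k\to u$ strongly in $W^{1,p}(\Omega)$.

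\textbf{Main obstacle.} I expect the main difficulty to be Step 1 and the first term in Step 2: the uniform-in-$a$ comparison between the concentrating bulk integrals and the boundary integrals for $W^{1,p}$-bounded families. On irregular $\Omega$ this must be done on the ball, with Jacobian factors absorbed in the definitions of $\gamma_a$ and $\beta$. There it reduces to a one-dimensional radial estimate,
\[
|v(r\theta)|^q-|v(\theta)|^q\lesssim \int_r^1|\nabla v|\,|v|^{q-1},
\]
integrated against $\rho_a$, whose mass concentrates at scale $a$ near $r=1$. Hölder's inequality and the subcritical integrability then produce the factor $a^s$.
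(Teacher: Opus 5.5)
Your proposal is correct and follows the paper's proof essentially step by step. You bound the gradients via Theorem A, prove $W^{1,p}$-boundedness by a contradiction argument resting on the bulk-to-boundary concentration estimate (Proposition~\ref{prop:quantitative-ball-trace-approx} and Lemma~\ref{lem:omega-concentration}), pass both constraints to the limit using compactness of the induced trace, and conclude with lower semicontinuity, Theorem A, and uniform convexity of $L^p$. The only cosmetic difference is the boundedness step: the paper writes $u_k=m_k+v_k$ with the unweighted mean and excludes $|m_k|\to\infty$ through the normalization $\int_\Omega |u_k|^q\gamma_{a_k}\,dy=1$, whereas you rescale by $\|u_k\|_{L^p}$ and use the orthogonality constraint to exclude a nonzero constant limit.
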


The proof proceeds in two stages. We first analyze a model concentration problem
on the unit ball, where we combine a quantitative comparison between the Poisson
extension and the radial extension of the trace, a boundary-layer estimate for
zero-trace functions, and the convergence of weighted bulk moments to the
corresponding boundary moments. We then transfer these estimates to the general
domain \(\Omega\) by means of the map \(\varphi\).
This yields a quantitative comparison of the bulk and boundary quotient
seminorms, from which the convergence of sharp constants and eigenvalues follows,
while the convergence of minimizers is obtained by combining the same
concentration mechanism with compactness of the induced trace operator. 

The paper is organized as follows. In Section~2 we introduce the analytic and
geometric framework, including admissible transfer mappings, admissible domains,
and the induced bulk and boundary weights. Section~3 contains the variational
formulation of the weighted Neumann and weighted Steklov problems, the existence
of the first nontrivial eigenvalues, and their interpretation in terms of
sharp weighted Poincar\'e and trace constants. In Section~4 we establish the
boundary concentration mechanism, first on the unit ball and then on general
admissible domains via the transfer map \(\varphi\). Finally, Section~5 is
devoted to the proofs of the main results and the unweighted boundary limit as a particular case.

\section{Analytic and geometric framework}

Throughout the paper, \(B=B(0,1)\subset \mathbb R^n\) denotes the unit ball,
\(\partial B=S^{n-1}\), and \(\Omega\subset \mathbb R^n\) is a bounded domain. 
As a standard assumption on the regularity of $\Omega$, we will assume that its boundary is locally \((n-1)\)-rectifiable.
\begin{definition}
A Borel set \(E\subset\mathbb R^n\) is called locally \((n-1)\)-rectifiable if
\(E\) is countably \((n-1)\)-rectifiable and
\[
\mathcal H^{n-1}(E\cap K)<\infty
\qquad\text{for every compact }K\subset\mathbb R^n.
\]
Equivalently, there exist Lipschitz maps
\[
F_j:\mathbb R^{n-1}\to\mathbb R^n,\qquad j=1,2,\dots,
\]
such that
\[
\mathcal H^{n-1}\!\left(E\setminus\bigcup_{j=1}^\infty F_j(\mathbb R^{n-1})\right)=0,
\]
and \(\mathcal H^{n-1}(E\cap K)<\infty\) for every compact \(K\subset\mathbb R^n\).
\end{definition}

We write \(d\sigma\) for the surface measure on \(\partial B\) and \(dS := d\mathcal H^{n-1}\!\lfloor_{\partial\Omega}\) for the
$n-1$ Hausdorff measure on \(\partial\Omega\).

\subsection{Function spaces}

We begin by fixing the basic terminology concerning function spaces.

Let \((X,\mu)\) be a measure space. A \emph{weight} on \(X\) is a nonnegative
measurable function \(w:X\to[0,\infty)\). When \(X\) is a locally compact
measure space, we shall usually assume \(w\in L^1_{\mathrm{loc}}(X,\mu)\).

For \(1\le p<\infty\), the \textit{weighted Lebesgue space} \(L^p(X,w)\) consists of all measurable functions \(f\) such that
\[
\|f\|_{L^p(X,w)} := \left(\int_X |f|^p w\,d\mu\right)^{1/p}
<\infty .
\]
As usual, functions are identified if they agree almost everywhere with respect to the weighted measure $w\,d\mu$. With the above norm this is a Banach space, see
Brezis~\cite[Theorem~4.8]{Brezis}.

\begin{lemma}[Bounded weights preserve continuous and compact embeddings]
\label{lem:bounded-weight-embedding}
Let \(U\subset\mathbb R^n\) be a bounded measurable set, let \(1\le p<\infty\),
and let \(w\in L^\infty(U)\), \(w\ge0\). If a Banach space \(Y\) is continuously
(resp. compactly) embedded into \(L^p(U)\), then \(Y\) is also continuously
(resp. compactly) embedded into \(L^p(U,w)\).

The same statement holds for \(U=\partial\Omega\) with respect to the surface
measure.
\end{lemma}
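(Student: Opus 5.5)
The argument rests on one pointwise inequality. The identity map from \(L^p(U)\) to \(L^p(U,w)\) is a bounded linear operator whose norm is at most \(\|w\|_{L^\infty(U)}^{1/p}\). The embedding \(Y\hookrightarrow L^p(U,w)\) is then the composition of the given embedding \(Y\hookrightarrow L^p(U)\) with this identity map. Both the continuous and the compact case follow from the fact that composing with a bounded operator preserves continuity and compactness.

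We first establish the bound on the identity map. For a measurable \(f\) on \(U\) we have \(|f|^p w\le \|w\|_{L^\infty(U)}|f|^p\) almost everywhere. Integrating gives
\[
\|f\|_{L^p(U,w)}\le \|w\|_{L^\infty(U)}^{1/p}\,\|f\|_{L^p(U)} .
\]
There is a well-definedness point to check: elements of \(L^p(U,w)\) are classes modulo \(w\,dx\)-null sets. Any Lebesgue-null set is also \(w\,dx\)-null, so two functions that agree Lebesgue-a.e. agree \(w\,dx\)-a.e. Hence the identity induces a well-defined linear map \(I_w:L^p(U)\to L^p(U,w)\), and it is bounded with the norm estimate above.

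Now let \(J:Y\to L^p(U)\) be the given embedding. If \(J\) is continuous, then \(I_w\circ J\) is continuous and
\[
\|u\|_{L^p(U,w)}\le \|w\|_\infty^{1/p}\|J\|\,\|u\|_Y .
\]
If \(J\) is compact, take a bounded sequence \((u_k)\) in \(Y\). Compactness gives a subsequence with \(Ju_{k_j}\to g\) in \(L^p(U)\). By the estimate above, \(I_wJu_{k_j}\to I_w g\) in \(L^p(U,w)\). So \(I_w\circ J\) maps bounded sets to relatively compact sets, that is, it is compact. Boundedness of \(U\) is not needed here beyond making the setting meaningful; the only input is \(w\in L^\infty\).

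For \(U=\partial\Omega\) we repeat the same argument with Lebesgue measure replaced by the surface measure \(dS=d\mathcal H^{n-1}\lfloor_{\partial\Omega}\) and \(w\in L^\infty(\partial\Omega,dS)\). The pointwise bound holds \(dS\)-a.e. The only real subtlety is measure-theoretic: the \(L^\infty\) bound on \(w\) and the equivalence classes must both refer to \(dS\), so that \(dS\)-null sets are \(w\,dS\)-null. Once this is fixed, the proof carries over verbatim.
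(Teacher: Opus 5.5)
Your proposal is correct and takes the same approach as the paper, whose proof consists solely of the bound \(\|f\|_{L^p(U,w)}\le\|w\|_{L^\infty(U)}^{1/p}\|f\|_{L^p(U)}\). You additionally spell out what the paper leaves implicit: composing with this bounded identity map preserves continuity and compactness, and the map is well defined on equivalence classes (including the \(dS\)-a.e. version on \(\partial\Omega\)).
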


\begin{proof}
The claim follows from the estimate
\[
\|f\|_{L^p(U,w)}
\le
\|w\|_{L^\infty(U)}^{1/p}\|f\|_{L^p(U)}.
\]
\end{proof}

Let \(U\subset\mathbb R^n\) be an open set and let \(1<p<\infty\). The \textit{Sobolev
space} \(W^{1,p}(U)\) consists of all functions \(u\in L^p(U)\) whose weak first
order derivatives belong to \(L^p(U)\). It is equipped with the norm
\[
\|u\|_{W^{1,p}(U)}
:=
\|u\|_{L^p(U)}+\|\nabla u\|_{L^p(U)} .
\]
We denote by \(W^{1,p}_0(U)\) the closure of \(C^\infty_0(U)\) in
\(W^{1,p}(U)\).

In the model domain \(B=B(0,1)\), we shall use the standard Sobolev, Poincare,
Hardy, compactness and trace theorems (see, e.g. \cite{Brezis}). On a general domain \(\Omega\), whose
boundary will only be assumed to satisfy the structural hypotheses introduced
below, the corresponding properties will either be imposed explicitly or derived
from the composition-operator assumptions. 

\begin{proposition}[Sobolev inequality on the ball]\label{prop:sobolev-ball}
Let \(1<p<n\), and let
\[
p^*:=\frac{np}{n-p}.
\]
Then there exists a constant \(C=C(n,p)>0\) such that
\[
\|u\|_{L^{p^*}(B)}
\le
C\|\nabla u\|_{L^p(B)}
\qquad\text{for all }u\in W^{1,p}_0(B).
\]
\end{proposition}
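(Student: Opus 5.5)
The plan is to prove the Gagliardo--Nirenberg--Sobolev inequality first for \(u\in C^\infty_0(B)\) and then pass to all of \(W^{1,p}_0(B)\) by density. Extending \(u\) by zero gives a compactly supported function in \(C^1_c(\mathbb R^n)\). The constant will therefore depend only on \(n\) and \(p\), not on \(B\), which is consistent with the scale-invariance of the exponent \(p^*\).

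\textbf{Step 1 (the case \(p=1\)).} For \(v\in C^1_c(\mathbb R^n)\) and each coordinate direction \(i\), the fundamental theorem of calculus gives
\[
|v(x)|\le\int_{\mathbb R}|\partial_i v(x_1,\dots,t,\dots,x_n)|\,dt .
\]
Multiplying these \(n\) bounds, raising to the power \(1/(n-1)\), and integrating successively in \(x_1,\dots,x_n\) with the generalized H\"older inequality at each stage (the standard Gagliardo--Nirenberg argument) yields
\[
\|v\|_{L^{n/(n-1)}(\mathbb R^n)}\le\prod_{i=1}^n\|\partial_i v\|_{L^1}^{1/n}\le\|\nabla v\|_{L^1}.
\]
This is the main technical step. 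The only delicate point is the bookkeeping of the iterated H\"older inequality with \(n-1\) factors at each stage, and I would follow the classical presentation (e.g. Brezis, Theorem 9.9) verbatim.

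\textbf{Step 2 (general \(1<p<n\)).} Apply Step 1 to \(v=|u|^{\gamma}\) with \(\gamma=\frac{p(n-1)}{n-p}>1\); this \(v\) is \(C^1\) with compact support. Since \(|\nabla v|=\gamma|u|^{\gamma-1}|\nabla u|\), H\"older's inequality gives
\[
\|u\|_{L^{\gamma n/(n-1)}}^{\gamma}\le\gamma\,\|u\|_{L^{(\gamma-1)p'}}^{\gamma-1}\|\nabla u\|_{L^p}.
\]
The choice of \(\gamma\) makes \(\gamma n/(n-1)=(\gamma-1)p'=p^*\). Dividing by \(\|u\|_{L^{p^*}}^{\gamma-1}\), which is finite and may be assumed nonzero, gives \(\|u\|_{L^{p^*}}\le\gamma\|\nabla u\|_{L^p}\).

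\textbf{Step 3 (density).} For \(u\in W^{1,p}_0(B)\), choose \(u_k\in C^\infty_0(B)\) with \(u_k\to u\) in \(W^{1,p}(B)\). By the inequality from Step 2, \((u_k)\) is Cauchy in \(L^{p^*}(B)\), so it converges there. Its \(L^{p^*}\) limit agrees a.e. with \(u\), because \(u_k\to u\) in \(L^p\). Passing to the limit in the inequality for \(u_k\) gives the claim with \(C=\frac{p(n-1)}{n-p}\). Alternatively, as the paper permits, one may simply cite Brezis for the standard Sobolev theorem.
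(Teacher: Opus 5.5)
Your argument is correct: it is the classical Gagliardo--Nirenberg--Sobolev proof. It proceeds in three steps: the \(p=1\) case via iterated H\"older, then \(v=|u|^{\gamma}\) with \(\gamma=\frac{p(n-1)}{n-p}>1\) and the exponent identity \(\gamma n/(n-1)=(\gamma-1)p'=p^*\), and finally density of \(C^\infty_0(B)\) in \(W^{1,p}_0(B)\). The paper gives no proof of its own and simply cites the result as standard from Brezis, whose proof is the argument you reproduce, so your route is the same as the paper's.
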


\begin{proposition}[Poincare inequality on the ball]\label{prop:poincare-ball}
Let \(1<p<\infty\). Then there exists a constant \(C=C(n,p)>0\) such that
\[
\left\|u-\frac{1}{|B|}\int_B u\,dx\right\|_{L^p(B)}
\le
C\|\nabla u\|_{L^p(B)}
\qquad\text{for all }u\in W^{1,p}(B).
\]
\end{proposition}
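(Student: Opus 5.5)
The plan is the classical contradiction argument based on Rellich--Kondrachov compactness on the ball.

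\emph{Setup.} Write $u_B:=\frac{1}{|B|}\int_B u\,dx$. Suppose the inequality fails for every constant. Then for each $k\in\mathbb N$ there is $u_k\in W^{1,p}(B)$ with
\[
\|u_k-(u_k)_B\|_{L^p(B)}>k\|\nabla u_k\|_{L^p(B)}.
\]
Subtracting the mean and rescaling (both sides are homogeneous, and subtracting a constant does not change the gradient), we may take $v_k:=u_k-(u_k)_B$ normalized so that
\[
(v_k)_B=0,\qquad \|v_k\|_{L^p(B)}=1,\qquad \|\nabla v_k\|_{L^p(B)}<1/k .
\]

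\emph{Compactness.} The sequence $(v_k)$ is bounded in $W^{1,p}(B)$. Since $B$ is a bounded Lipschitz domain, the Rellich--Kondrachov theorem gives a compact embedding $W^{1,p}(B)\hookrightarrow L^p(B)$; this is the standard compactness theorem on $B$ that the paper allows us to use. Reflexivity of $W^{1,p}(B)$ for $1<p<\infty$ then yields a subsequence with $v_k\rightharpoonup v$ weakly in $W^{1,p}(B)$ and $v_k\to v$ strongly in $L^p(B)$.

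\emph{Identifying the limit.} Strong $L^p$ convergence gives $\|v\|_{L^p(B)}=1$. Because $B$ is bounded, $L^p(B)\subset L^1(B)$ continuously, so the means pass to the limit and $v_B=0$. Weak lower semicontinuity of the norm gives
\[
\|\nabla v\|_{L^p(B)}\le\liminf_{k\to\infty}\|\nabla v_k\|_{L^p(B)}=0,
\]
so $\nabla v=0$ a.e. Alternatively, one can check directly that $\nabla v_k\to0$ in $L^p$ forces $\nabla v=0$ in the distributional sense.

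\emph{Conclusion.} Since $B$ is connected, a function in $W^{1,p}(B)$ with zero weak gradient is a.e. constant; this follows by mollification, since the mollified functions have zero gradient on compactly contained subdomains. The zero mean then forces $v=0$, which contradicts $\|v\|_{L^p(B)}=1$. The constant obtained depends only on $n$ and $p$, because the argument involves only $B$, which is fixed.

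The only nontrivial ingredient is the compact embedding, and that is standard for the ball. A constructive alternative would be to average $|u(x)-u(y)|$ along segments, which works because $B$ is convex and yields an explicit constant. I do not expect a real obstacle; the only point needing care is the connectedness step.
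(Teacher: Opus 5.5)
Your argument is correct. The paper gives no proof of this proposition; it quotes it as a standard fact with a pointer to Brezis. Your compactness-by-contradiction proof is the standard textbook one, and it is the same scheme the paper itself uses for the Friedrichs inequality in Proposition~\ref{prop:friedrichs-neumann}.

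One small point: the paper's own Rellich--Kondrachov statement (Proposition~\ref{prop:rellich-ball}) covers only $p<n$. For $p\ge n$ you therefore need the general Rellich--Kondrachov theorem. Alternatively, use $W^{1,p}(B)\subset W^{1,r}(B)$ with $r<n$ chosen so that $nr/(n-r)>p$.
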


\begin{proposition}[Rellich--Kondrachov theorem on the ball]\label{prop:rellich-ball}
Let \(1<p<n\) and \(1\le r<p^*\), where
\[
p^*=\frac{np}{n-p}.
\]
Then the embedding
\[
W^{1,p}(B)\hookrightarrow L^r(B)
\]
is compact.
\end{proposition}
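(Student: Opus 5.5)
The plan is to derive the compactness of \(W^{1,p}(B)\hookrightarrow L^r(B)\) for \(1\le r<p^*\) from a compactness statement on a slightly larger ball, combined with an interpolation argument.

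\emph{Step 1: reduction to \(W^{1,p}_0\).} Since \(B\) is a Lipschitz (indeed smooth) domain, there is a bounded linear extension operator
\[
E:W^{1,p}(B)\to W^{1,p}_0(B(0,2)),\qquad Eu|_B=u,
\]
obtained by reflection across \(\partial B\) in polar coordinates, \(u(x)\mapsto u(x/|x|^2)\) for \(1<|x|<2\), followed by multiplication with a cutoff. By Proposition~\ref{prop:sobolev-ball}, rescaled to \(B(0,2)\), a bounded sequence \((u_k)\) in \(W^{1,p}(B)\) gives a sequence \(v_k=Eu_k\) that is bounded both in \(W^{1,p}(\mathbb R^n)\) and in \(L^{p^*}\), and each \(v_k\) is supported in \(\overline{B(0,2)}\).

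\emph{Step 2: \(L^1\) precompactness via Kolmogorov--Riesz--Fr\'echet.} For \(v\in C_c^\infty\) we have
\[
\|v(\cdot+h)-v\|_{L^1}\le |h|\,\|\nabla v\|_{L^1}\le C|h|\,\|\nabla v\|_{L^p},
\]
where the last inequality uses Hölder's inequality on the bounded support. By density this bound holds for all \(v_k\). The sequence is therefore equicontinuous in \(L^1\), bounded in \(L^1\), and uniformly supported in a fixed compact set. Kolmogorov's criterion then gives a subsequence converging in \(L^1(B(0,2))\), and hence in \(L^1(B)\).

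\emph{Step 3: interpolation up to \(r<p^*\).} For \(1\le r<p^*\), choose \(\theta\in(0,1]\) with \(1/r=\theta+(1-\theta)/p^*\). Hölder's inequality gives
\[
\|u_k-u_j\|_{L^r}\le\|u_k-u_j\|_{L^1}^{\theta}\,\|u_k-u_j\|_{L^{p^*}}^{1-\theta}.
\]
The second factor is uniformly bounded by Step 1, and the first factor tends to zero by Step 2. Hence the subsequence is Cauchy in \(L^r(B)\), which proves that the embedding is compact.

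The main technical obstacle is Step 1. One must justify that the extension preserves \(W^{1,p}\) bounds: the inversion \(x\mapsto x/|x|^2\) is a smooth diffeomorphism of the annulus, so this is routine. The strict inequality \(r<p^*\) is genuinely needed, since it is what makes \(\theta>0\) in Step 3.
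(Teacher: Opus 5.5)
Your proof is correct. The paper gives no argument of its own here; it quotes the result as a standard fact (citing Brezis). Your route, namely extension to $W^{1,p}_0(B(0,2))$, then Kolmogorov--Riesz--Fr\'echet in $L^1$, then interpolation between $L^1$ and $L^{p^*}$, is essentially the textbook proof being cited. Brezis interpolates inside the translation estimate rather than after extracting an $L^1$-convergent subsequence, which is a cosmetic difference.

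The one step that deserves an extra line is gluing $u$ to $x\mapsto u(x/|x|^2)$ across $\partial B$. This is immediate for $u\in C^1(\overline B)$, because the inversion fixes $\partial B$ pointwise, and it passes to all of $W^{1,p}(B)$ by density of $C^1(\overline B)$.
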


\begin{proposition}[Hardy inequality on the ball]\label{prop:hardy-ball}
Let \(1<p<n\), and let
\[
d(x):=\operatorname{dist}(x,\partial B)=1-|x|,
\qquad x\in B.
\]
Then there exists a constant \(C=C(n,p)>0\) such that
\[
\int_B \frac{|u(x)|^p}{d(x)^p}\,dx
\le
C\int_B |\nabla u(x)|^p\,dx
\qquad\text{for all }u\in W^{1,p}_0(B).
\]
\end{proposition}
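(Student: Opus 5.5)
The Hardy inequality on the ball, Proposition~\ref{prop:hardy-ball}, will be proved by first reducing to smooth compactly supported functions and then splitting the ball into an inner region and a boundary layer. Since \(C^\infty_0(B)\) is dense in \(W^{1,p}_0(B)\), it suffices to prove the estimate for \(u\in C^\infty_0(B)\). The extension to \(W^{1,p}_0(B)\) then follows from Fatou's lemma: if \(u_k\to u\) in \(W^{1,p}\) and a.e., then \(\int_B |u|^p d^{-p}\le\liminf_k\int_B |u_k|^p d^{-p}\). I would split \(B\) into the inner ball \(B_{1/2}=\{|x|<1/2\}\), where \(d\ge 1/2\), and the annulus \(A=\{1/2\le|x|<1\}\).

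On the inner ball, \(d^{-p}\le 2^p\), so
\[
\int_{B_{1/2}}|u|^p d^{-p}\,dx\le 2^p\|u\|_{L^p(B)}^p.
\]
For \(u\in W^{1,p}_0(B)\), the Sobolev inequality (Proposition~\ref{prop:sobolev-ball}) combined with H\"older's inequality on the bounded set \(B\) gives \(\|u\|_{L^p(B)}\le C\|u\|_{L^{p^*}(B)}\le C\|\nabla u\|_{L^p(B)}\). So the inner part is controlled by \(\|\nabla u\|_{L^p(B)}^p\).

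The main step is the boundary layer \(A\), where I would use polar coordinates \(x=r\theta\) with \(r\in[1/2,1)\) and \(\theta\in S^{n-1}\). Fix \(\theta\) and set \(g(t)=u((1-t)\theta)\) for \(t\in(0,1/2]\). Then \(g\) is smooth, \(g\) vanishes near \(t=0\), and \(|g'(t)|\le|\nabla u((1-t)\theta)|\). The one-dimensional Hardy inequality \(\int_0^{1/2}|g|^p t^{-p}\,dt\le C_p\int_0^{1/2}|g'|^p\,dt\) needs a boundary term at \(t=1/2\), because \(g\) does not vanish there. To handle it, I would multiply \(u\) by a smooth cutoff \(\eta(|x|)\) with \(\eta=1\) on \([3/4,1]\) and \(\eta=0\) on \([0,1/2]\). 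The function \(\eta u\) vanishes at both ends of the interval, so the classical Hardy inequality \(\int_0^\infty |h|^p t^{-p}\,dt\le (p/(p-1))^p\int_0^\infty|h'|^p\,dt\) applies directly. This inequality is proved by writing \(|h(t)|\le\int_0^t|h'|\) and using the weighted averaging inequality.

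Integrating over \(\theta\) with the Jacobian \(r^{n-1}\in[2^{1-n},1]\) gives
\[
\int_A |\eta u|^p d^{-p}\,dx\le C\int_A |\nabla(\eta u)|^p\,dx\le C\bigl(\|\nabla u\|_{L^p(B)}^p+\|u\|_{L^p(B)}^p\bigr).
\]
On the region where \(\eta<1\) we have \(d\ge1/4\), so that part is again bounded by \(4^p\|u\|_{L^p}^p\). Absorbing every \(\|u\|_{L^p}\) term through the Sobolev step above yields the claim with \(C=C(n,p)\).

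The only delicate point is the boundary term in the one-dimensional Hardy inequality, which the cutoff removes. The restriction \(p<n\) is not really needed for the argument; it is used only so that Proposition~\ref{prop:sobolev-ball} can be quoted. One could instead cite the Poincar\'e inequality for \(W^{1,p}_0\).
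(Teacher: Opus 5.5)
Your proof is correct, and it takes a different route from the paper. The paper gives no proof of Proposition~\ref{prop:hardy-ball}: it lists the Hardy inequality among the standard facts on the ball (Sobolev, Poincar\'e, Hardy, Rellich--Kondrachov, trace) and refers to Brezis.

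You give a self-contained argument instead:
\begin{enumerate}
\item You reduce to \(C^\infty_0(B)\) by density and Fatou's lemma.
\item You apply the one-dimensional Hardy inequality \(\int_0^\infty |h|^p t^{-p}\,dt\le (p/(p-1))^p\int_0^\infty |h'|^p\,dt\) along radii to the cut-off function \(\eta u\). The cutoff removes the boundary term at \(t=1/2\), and the bounds \(2^{1-n}\le r^{n-1}\le 1\) let you pass between radial and volume integrals.
\item You absorb the lower-order terms \(\|u\|_{L^p(B)}\) with the Sobolev (or Friedrichs) inequality on \(W^{1,p}_0(B)\). These terms come from \(\nabla\eta\) and from the region \(\{d>1/4\}\).
\end{enumerate}

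What each approach buys: the citation is short and connects to the general theory, in which Hardy inequalities hold on bounded Lipschitz domains, and on convex domains such as \(B\) even with the sharp constant \((p/(p-1))^p\). Your proof is elementary and transparent, with an explicit constant depending only on \(n\), \(p\) and the fixed cutoff. It also correctly shows that the hypothesis \(p<n\) plays no role beyond allowing Proposition~\ref{prop:sobolev-ball} to be quoted.

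One small streamlining: the separate estimate on \(B_{1/2}\) is redundant. That ball already lies in the set \(\{\eta<1\}\subset\{d>1/4\}\), which your final step handles.
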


For a general domain \(U\subset\mathbb R^n\), we define the \emph{abstract trace space}
of \(W^{1,p}(U)\) by
\[
Tr(W^{1,p}(U))
:=
W^{1,p}(U)/W^{1,p}_0(U),
\]
equipped with the quotient norm
\[
\|[u]\|_{Tr(W^{1,p}(U))}
:=
\inf\Bigl\{
\|u+v\|_{W^{1,p}(U)}:\ v\in W^{1,p}_0(U)
\Bigr\}.
\]
Here \([u]\) denotes the coset of \(u\in W^{1,p}(U)\) modulo \(W^{1,p}_0(U)\).

Whenever a bounded linear trace operator
\[
T_U:W^{1,p}(U)\to X(\partial U)
\]
is available and satisfies
\[
\ker T_U=W^{1,p}_0(U),
\]
it induces a canonical isomorphism between \(Tr(W^{1,p}(U))\) and the image \(T_U(W^{1,p}(U))\).

If \(\Omega\subset\mathbb R^n\) is a bounded Lipschitz domain, then the classical trace
theory (see, for instance, Grisvard~\cite[Theorem~1.5.1.2 and Corollary~1.5.1.6]{Grisvard}) yields a bounded surjective trace operator
\[
T:W^{1,p}(\Omega)\to W^{1-1/p,p}(\partial \Omega),
\]
and the abstract trace space \(Tr(W^{1,p}(\Omega))\) can be canonically
identified with the fractional Sobolev space \(W^{1-1/p,p}(\partial \Omega)\).

For completeness, recall that if \(\Gamma\subset\mathbb R^n\) is a compact Lipschitz
hypersurface, \(0<s<1\), and \(1\le p<\infty\), then \(W^{s,p}(\Gamma)\) may be described
via the Slobodeckij norm
\[
\|g\|_{W^{s,p}(\Gamma)}^p
=
\|g\|_{L^p(\Gamma)}^p
+
\iint_{\Gamma\times\Gamma}
\frac{|g(\xi)-g(\eta)|^p}{|\xi-\eta|^{(n-1)+sp}}
\,dS(\xi)\,dS(\eta).
\]

\begin{proposition}[Trace theorem on the ball]
Let \(1<p<\infty\). Then the trace operator
\[
T:W^{1,p}(B)\to W^{1-1/p,p}(\partial B)
\]
is bounded and surjective.
\end{proposition}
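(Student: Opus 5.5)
The plan is to prove the two halves separately: boundedness of the trace operator, then surjectivity via an explicit right inverse. Both reduce to the half-space \(\mathbb R^n_+=\{(x',x_n):x_n>0\}\) by local charts. Since \(\partial B=S^{n-1}\) is a smooth compact hypersurface, we cover it by finitely many open sets \(U_1,\dots,U_N\). In each \(U_j\) we fix a smooth diffeomorphism \(\Phi_j\) that flattens \(\partial B\cap U_j\) onto a piece of \(\{x_n=0\}\) and sends \(B\cap U_j\) into \(\mathbb R^n_+\). Stereographic or graph coordinates over tangent hyperplanes will do. We add an interior set \(U_0\Subset B\) and a smooth partition of unity \(\{\psi_j\}_{j=0}^N\) subordinate to this cover.

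For any \(u\in W^{1,p}(B)\), the pieces \(\psi_j u\circ\Phi_j^{-1}\) are compactly supported in \(\overline{\mathbb R^n_+}\). Their \(W^{1,p}\) norms are controlled by \(\|u\|_{W^{1,p}(B)}\), because composition with a smooth diffeomorphism and multiplication by a smooth cutoff preserve \(W^{1,p}\). For the same reason, the Slobodeckij seminorm on \(\partial B\) is equivalent to the sum of the flat seminorms of the pulled-back pieces. This equivalence uses that the Jacobians of the charts are bounded above and below and that \(|\xi-\eta|\) is comparable to the chart distance on each patch. Pairs \((\xi,\eta)\) with \(|\xi-\eta|\) bounded below contribute at most a multiple of the \(L^p(\partial B)\) norm. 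With these reductions, the statement on \(B\) follows from the half-space statement: \(T:W^{1,p}(\mathbb R^n_+)\to W^{1-1/p,p}(\mathbb R^{n-1})\) is bounded and surjective.

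\textbf{Boundedness on the half-space.} By density of \(C^\infty_c(\overline{\mathbb R^n_+})\) in \(W^{1,p}(\mathbb R^n_+)\), it suffices to prove the estimate for smooth \(u\) and set \(Tu=u(\cdot,0)\).
\begin{enumerate}
\item The \(L^p\) bound follows from \(|u(x',0)|^p\le p\int_0^\infty|u|^{p-1}|\partial_nu|\,dx_n\) and Young's inequality.
\item For the seminorm, fix \(x',y'\) with \(h=|x'-y'|\), and let \(z\) be the midpoint with height \(h\). We write \(u(x',0)-u(y',0)\) as the difference of \(u(x',0)-\bar u_Q\) and \(u(y',0)-\bar u_Q\), where \(\bar u_Q\) is the mean of \(u\) over a cube \(Q\) of side comparable to \(h\) placed at height \(h\).
\item Each of these differences is bounded by integrating \(|\nabla u|\) along segments or over cones. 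We then integrate in \(x',y'\) against \(h^{-(n-1)-(p-1)}\).
\item The resulting weighted integral is reorganized by Fubini and closed with the one-dimensional Hardy inequality. This yields \(\iint\frac{|Tu(x')-Tu(y')|^p}{|x'-y'|^{n+p-2}}\lesssim\|\nabla u\|_{L^p}^p\).
\end{enumerate}
This is Gagliardo's argument, and I expect the Hardy-inequality bookkeeping in the last step to be the main technical obstacle.

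\textbf{Surjectivity on the half-space.} Given \(g\in W^{1-1/p,p}(\mathbb R^{n-1})\), we define the extension
\[
Eg(x',x_n)=\eta(x_n)\,(g*\theta_{x_n})(x'),
\]
where \(\theta_t(x')=t^{1-n}\theta(x'/t)\) is a mollifier and \(\eta\) is a cutoff equal to \(1\) near \(0\). Then \(Eg(\cdot,x_n)\to g\) in \(L^p\) as \(x_n\to0\), so \(T(Eg)=g\).

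For the gradient, we use that \(\int\nabla\theta=0\), and likewise for \(\partial_t\theta_t\). This gives \(|\nabla Eg(x',t)|\lesssim t^{-n}\int_{|y'|<t}|g(x'-y')-g(x')|\,dy'+|\text{cutoff terms}|\). Applying Jensen and integrating in \(t\), we get
\[
\int_0^\infty\int t^{-n-p+1}\int_{|y'|<t}|g(x'-y')-g(x')|^p\,dy'\,dx'\,dt .
\]
Performing the \(t\)-integral first, \(\int_{|y'|}^\infty t^{-n-p+1}\,dt\simeq |y'|^{-(n-1)-(p-1)}\), so this quantity is exactly the Slobodeckij seminorm. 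The \(L^p\) part of \(\|Eg\|_{W^{1,p}}\) is immediate from Young's convolution inequality.

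Transporting \(E\) back to \(\partial B\) via the charts and the partition of unity gives a bounded right inverse of \(T\) on the ball. This yields the surjectivity half of the statement.
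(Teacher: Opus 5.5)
Your proposal is correct. Reducing to the half-space by charts, proving boundedness with Gagliardo's Hardy-inequality argument, and building a right inverse through the mollifier extension $Eg(x',t)=\eta(t)\,(g*\theta_t)(x')$ is the classical proof of this trace theorem. The paper gives no argument of its own here: it records the statement as a standard fact covered by the classical trace theory it cites (Grisvard, which applies to the ball as a Lipschitz domain), so your route is the one the paper implicitly relies on.
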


The next two statements are standard consequences of the fractional Sobolev
embedding and compactness theorems, applied in local charts on the compact
smooth manifold \(\partial B\); see, for example, Di Nezza--Palatucci--Valdinoci
\cite{DNPV}.

\begin{proposition}[Compactness of the trace embedding on the sphere]\label{compsph}
Let \(1<p<n\) and
\[
1\le q<\frac{p(n-1)}{n-p}.
\]
Then the embedding
\[
W^{1-1/p,p}(\partial B)\hookrightarrow L^q(\partial B)
\]
is compact.
\end{proposition}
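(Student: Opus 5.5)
The plan is to reduce the statement to the fractional Sobolev embedding on the compact \((n-1)\)-dimensional manifold \(\partial B\), and then upgrade continuity to compactness by the standard Rellich-type argument with interpolation. Throughout, write \(s=1-1/p\in(0,1)\) and \(N=n-1\). Note that \(sp=p-1<N\) because \(p<n\). The fractional Sobolev exponent on \(\partial B\) is
\[
q^*:=\frac{Np}{N-sp}=\frac{(n-1)p}{n-1-(p-1)}=\frac{p(n-1)}{n-p},
\]
which is exactly the endpoint appearing in the statement.

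First, I will cover \(\partial B\) by finitely many coordinate charts, for instance stereographic projections or graph charts over balls in \(\mathbb R^{n-1}\). I will fix a smooth partition of unity \(\{\psi_j\}\) subordinate to this cover. For \(g\in W^{s,p}(\partial B)\), each localized piece \(\psi_j g\), pulled back to a bounded set \(U_j\subset\mathbb R^{n-1}\), lies in \(W^{s,p}(\mathbb R^{n-1})\). Its norm is controlled by \(\|g\|_{W^{s,p}(\partial B)}\): the charts are bi-Lipschitz, so the Gagliardo kernel \(|\xi-\eta|^{-(N+sp)}\) and the surface measure are comparable to their Euclidean counterparts. The product with the Lipschitz cutoff \(\psi_j\) stays in \(W^{s,p}\) by the standard estimate \(|\psi_j(x)g(x)-\psi_j(y)g(y)|\le |\psi_j(x)||g(x)-g(y)|+|g(y)||\psi_j(x)-\psi_j(y)|\). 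The second term is handled by bounding \(\int |x-y|^{p-N-sp}\,dx\) over a bounded set, which converges since \(p-sp=1>0\).

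Next, I will invoke the results of Di Nezza--Palatucci--Valdinoci~\cite{DNPV} in \(\mathbb R^{N}\). The first is the continuous embedding \(W^{s,p}\hookrightarrow L^{q^*}\). The second is the compact embedding \(W^{s,p}(\mathbb R^N)\hookrightarrow L^r_{\loc}\) for \(1\le r<q^*\). Concretely, a bounded sequence supported in a fixed compact set has a subsequence converging in \(L^p\); this uses the Fréchet--Kolmogorov criterion, with translation estimates coming from the Gagliardo seminorm. I will then take a bounded sequence \((g_k)\) in \(W^{s,p}(\partial B)\) and extract, by a diagonal argument over the finitely many charts, a subsequence along which every \(\psi_j g_k\) converges in \(L^p\). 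Summing over \(j\) shows that \(g_k\) converges in \(L^p(\partial B)\).

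To pass from \(L^p\) to \(L^q\) with \(q<q^*\), I will interpolate. If \(q\le p\), the result follows from Hölder's inequality, since \(\partial B\) has finite measure. If \(p<q<q^*\), I choose \(\theta\in(0,1]\) with \(1/q=\theta/p+(1-\theta)/q^*\). Then
\[
\|g_k-g_l\|_{L^q}\le\|g_k-g_l\|_{L^p}^{\theta}\|g_k-g_l\|_{L^{q^*}}^{1-\theta}.
\]
The second factor is bounded by the continuous embedding, so \((g_k)\) is Cauchy in \(L^q(\partial B)\).

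The main obstacle is the localization step, namely checking that the Slobodeckij seminorm on \(\partial B\) dominates the seminorms of the localized, pulled-back pieces. The off-diagonal contributions between different charts must also be controlled. I would handle these by using that \(|\xi-\eta|\) is bounded below on the supports of distinct pieces, and that \(sp<p\) makes the cutoff error integrable. Everything else is a citation of known results.
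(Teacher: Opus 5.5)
Your proposal is correct and follows essentially the route the paper indicates. The paper gives no detailed proof: it only says the result is a standard consequence of the fractional Sobolev embedding and compactness theorems of Di Nezza--Palatucci--Valdinoci applied in local charts on \(\partial B\). Your chart localization, the \(L^p\)-compactness via Fr\'echet--Kolmogorov, and the interpolation up to \(q<q^*=p(n-1)/(n-p)\) make that citation explicit.
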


\begin{proposition}[Fractional embeddings on the sphere]
\label{prop:fractional-sphere-embedding}
Let \(1<p<n\), \(1<q<p(n-1)/(n-p)\), and let
\[
\delta_{p,q}
:=
\min\left\{
1-\frac1p,\,
1-\frac{n}{p}+\frac{n-1}{q}
\right\}.
\]
Then, for every \(0<s<\delta_{p,q}\), the embedding
\[
W^{1-1/p,p}(\partial B)\hookrightarrow W^{s,q}(\partial B)
\]
is continuous.
\end{proposition}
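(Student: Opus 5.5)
The embedding will follow from the fractional Sobolev embedding theorem on the compact smooth $(n-1)$-dimensional manifold $\partial B$. The plan is to reduce it to two elementary embeddings and compose them. We write $N:=n-1$ for the dimension of $\partial B$ and $t:=1-1/p$ for the smoothness of the source space. The claimed range of $s$ is exactly the condition in the fractional Sobolev embedding $W^{t,p}\hookrightarrow W^{s,q}$ on an $N$-dimensional compact manifold. In the case $q\ge p$ that condition reads
\[
s<t \quad\text{and}\quad t-\frac{N}{p}\ \ge\ s-\frac{N}{q},
\]
that is, $s\le t-N/p+N/q = 1-\frac1p-\frac{n-1}{p}+\frac{n-1}{q}=1-\frac np+\frac{n-1}{q}$. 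Since $q<p(n-1)/(n-p)$, this quantity is positive, so $\delta_{p,q}>0$ and the range of $s$ is nonempty.

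\textbf{Case $q\ge p$.} Fix $0<s<\delta_{p,q}$. We cover $\partial B$ by finitely many charts, for instance stereographic projections from two poles or a finite smooth atlas, together with a smooth partition of unity $\{\chi_j\}$. Multiplication by $\chi_j$ is bounded on $W^{t,p}(\partial B)$ and on $W^{s,q}(\partial B)$. Pulling back through a smooth diffeomorphism changes the Slobodeckij seminorm only by a constant, because the distortion of $|\xi-\eta|$ and of the surface measure is bilipschitz on compact chart domains. Each localized piece is therefore a compactly supported function in $W^{t,p}(\mathbb R^{N})$. We then apply the Euclidean embedding $W^{t,p}(\mathbb R^N)\hookrightarrow W^{s,q}(\mathbb R^N)$ for $s<t$ and $t-N/p\ge s-N/q$, following \cite{DNPV}. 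Concretely, with $\theta=t-s>0$ and $\frac1q\ge\frac1p-\frac{\theta}{N}$, this is the embedding of Besov spaces $B^{t}_{p,p}\hookrightarrow B^{s}_{q,q}$. After that we glue the pieces back together and sum the estimates.

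\textbf{Case $q<p$.} Here $\delta_{p,q}=1-1/p$ or smaller, and on the compact manifold, which has finite measure, we use the chain
\[
W^{t,p}(\partial B)\hookrightarrow W^{s,p}(\partial B)\hookrightarrow W^{s,q}(\partial B),
\]
valid for $s<t$. The first inclusion is monotonicity in the smoothness index: split the double integral into $|\xi-\eta|<1$ and $|\xi-\eta|\ge1$ and compare the kernels. The second inclusion follows from Hölder's inequality applied to both the $L^q$ norm and the double integral, since the measure $|\xi-\eta|^{-(N+sq)}\,dS\,dS$ is not finite. For the double integral the plan is to write $s=s'+\varepsilon$ with $s<s'<t$ and apply Hölder to the kernel with the extra integrable factor $|\xi-\eta|^{-N+\varepsilon'}$, which is integrable on a compact $N$-manifold. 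This costs an arbitrarily small loss in $s$, which the strict inequality $s<\delta_{p,q}$ allows.

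The main obstacle is the Euclidean fractional embedding with a \emph{change of integrability} and loss of smoothness, $W^{t,p}\hookrightarrow W^{s,q}$ for $q>p$. The plan is to cite it rather than prove it. If a self-contained argument were needed, I would try interpolating between the fractional Sobolev embedding $W^{t,p}\hookrightarrow L^{p_t^*}$, where $p_t^*=Np/(N-tp)$ (this requires $tp<N$, which holds since $(p-1)<n-1$), and the identity map of $W^{t,p}$ into itself. This uses the Gagliardo--Nirenberg-type inequality $[u]_{W^{s,q}}\lesssim \|u\|_{L^{r}}^{1-\lambda}[u]_{W^{t,p}}^{\lambda}$ from the fractional literature. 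Choosing the exponents compatibly with $s<\delta_{p,q}$, so that there is strict room, is where care is needed.
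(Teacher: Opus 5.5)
Your proposal is correct and follows the same route as the paper, which gives no separate proof and simply invokes the Euclidean fractional Sobolev embedding in local charts on the compact manifold $\partial B$, citing \cite{DNPV}. Your case split and the H\"older argument for $q<p$ only make explicit what the paper leaves implicit; note one slip there: you mean $s'=s+\varepsilon$ with $s<s'\le t$, and then H\"older with exponents $p/q$ and $p/(p-q)$ leaves the integrable kernel $|\xi-\eta|^{-N+(s'-s)pq/(p-q)}$.
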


The particular exponent \(\delta_{p,q}\) is chosen for later use: it is the
largest regularity exponent needed simultaneously in the boundary-layer
estimate and in the Poisson approximation argument.

\subsection{Change of variables in the bulk and on the boundary}

We recall the change-of-variables formulae used below.  In the bulk we shall
use the Sobolev change-of-variables formula, while on the boundary we shall use
the density of the image surface measure, interpreted under a doubling
assumption as a volume derivative.  For the bulk formula we refer to
Federer~\cite{Federer} and Haj{\l}asz~\cite{Hajlasz1993}; for the set-function
and volume-derivative point of view, see Vodop'yanov--Ukhlov
\cite{VodopyanovUkhlov2004,VodopyanovUkhlov2005}.

Let \(\Omega,\widetilde\Omega\subset\mathbb R^n\) be domains. We say that a mapping
\(\varphi:\Omega\to \widetilde\Omega\) belongs to \(W^{1,p}_{\mathrm{loc}}(\Omega;\mathbb R^n)\),
\(1\le p\le\infty\), if its coordinate functions belong to
\(W^{1,p}_{\mathrm{loc}}(\Omega)\). In this case the weak differential
\(D\varphi(x)\) is defined for a.e. \(x\in \Omega\), and we write
\[
J_\varphi(x):=\det D\varphi(x).
\]
We denote by \(|D\varphi(x)|\) the operator norm of the matrix \(D\varphi(x)\).

Let us recall the change of variables formula for homeomorphisms $\varphi:B \to\Omega$. It can be found, for example, in \cite{Halmos1950}. We define a volume derivative of the inverse mapping by
$$
J_{\varphi^{-1}}(y) := \lim_{r \to 0} \frac{|\varphi^{-1}(B(y,r))|}{|B(y,r)|},
$$
where $B(y,r)$ is a ball with a center at a point $x$ and with a radius $r$.
This function $J_{\varphi^{-1}}$ belongs to the space $L_{1,\loc}(\Omega)$ and, if $\varphi^{-1} \in W^1_{n,\loc}(\Omega)$, then this volume derivative coincides with the Jacobian a.e. in $\Omega$: $J_{\varphi^{-1}}(y) = \det D\varphi(x)$ for almost all $y\in\Omega$ (see, for example, \cite{Hajlasz1993,VGR1979}).

Let, in addition, the homeomorphism $\varphi : \Omega\to \widetilde\Omega$ possess the Luzin $N^{-1}$-property (the preimage of a set of measure zero has measure zero), then the following change of variables formula 
\begin{equation}
\label{chvf2}
\int\limits_{B}f\circ\varphi (x) dx=\int\limits_{\Omega} f(y) J_{\varphi^{-1}}(y) dy,
\end{equation}
holds for every nonnegative measurable function $f:\Omega\to\mathbb R$.

As before, consider a bounded domain \(\Omega\subset \mathbb R^n\) with locally \((n-1)\)-rectifiable boundary. Now assume that our $\varphi:B \to \Omega$ can be homeomorphicaly extended to the boundary 
$$
    \varphi: \overline{B} \to \overline{\Omega}.
$$
Consider the boundary homeomorphism
$$
    \varphi_\partial:=\varphi|_{\partial B}: \partial B \to \partial\Omega.
$$ 

To pass from the bulk to the boundary, one can no longer rely on the ordinary
Lebesgue differentiation theory in \(\mathbb R^n\). Instead, we use the
set-function approach in metric measure spaces of homogeneous type, which
provides the appropriate notion of volume derivative for the inverse boundary
mapping; for general definitions and results, see
Vodop'yanov--Ukhlov~\cite{VodopyanovUkhlov2004}. In order to work in this
framework, we impose in addition the following doubling assumption on
\(\partial\Omega\).

We say that the boundary $\partial\Omega$ satisfies the \textit{doubling condition}, if there exists a  constant $C$ such that
$$
     \mathcal{H}^{n-1}(\partial\Omega \cap B^\partial(x,2r)) \leq C\mathcal{H}^{n-1}(\partial\Omega \cap B^\partial(x,r)),
$$
where $B^\partial$ is an Eucledian ball on the boundary.

\begin{proposition}\cite{VodopyanovUkhlov2004}\label{cov-bound}
    Let $\Omega \subset \R^n$ be a domain with locally \((n-1)\)-rectifiable boundary $\partial\Omega$ satisfying the doubling condition. Assume that the boundary homeomorphism $\varphi_\partial: \partial B \to \partial\Omega$ satisfy \textit{Luzin $N^{-1}$-property}:
    $$
    \mathcal{H}^{n-1}(E)=0 \quad\Longrightarrow \quad \mathcal{H}^{n-1}(\varphi_\partial^{-1}(E))=0, \quad E \in \partial\Omega.
    $$ 
    
    Then for almost all $t \in \partial\Omega$ there exists a nonnegative measurable function $J^\partial_{\varphi^{-1}}: \partial\Omega \to [0,\infty)$ (the \emph{volume derivative of the inverse boundary mapping}),
    $$
        J^\partial_{\varphi^{-1}} (t) = \lim_{r\to0}
\frac{\sigma\bigl(\psi_\partial(\partial\Omega\cap B(t,r))\bigr)}
     {\mathcal H^{n-1}(\partial\Omega\cap B(t,r))},
    $$
    such that 
\begin{equation}\label{eq:weak-cov-inverse}
\int_{\partial\Omega} f(t)\,J^\partial_{\varphi^{-1}}(t)\,dS
=
\int_{\partial B} f(\varphi_\partial(s))\,d\sigma
\end{equation}
for every nonnegative measurable function \(f:\partial\Omega\to[0,\infty]\)
\end{proposition}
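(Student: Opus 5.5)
We view the boundary change of variables as a statement about two measures on \(\partial\Omega\) and derive it from the Radon--Nikodym theorem and a differentiation theorem on a doubling space. On the Borel sets of \(\partial\Omega\) set
\[
\nu(E):=\sigma\bigl(\varphi_\partial^{-1}(E)\bigr),
\]
writing \(\psi_\partial:=\varphi_\partial^{-1}\) as in the formula for \(J^\partial_{\varphi^{-1}}\). Because \(\varphi_\partial\) is a homeomorphism, \(\psi_\partial\) maps Borel sets to Borel sets, so \(\nu\) is a finite Borel measure on the compact set \(\partial\Omega\), with total mass \(\nu(\partial\Omega)=\sigma(\partial B)\). The Luzin \(N^{-1}\)-property says exactly that \(\mathcal H^{n-1}(E)=0\) implies \(\nu(E)=0\). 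Hence \(\nu\ll S\), where \(S=\mathcal H^{n-1}\lfloor_{\partial\Omega}\).

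We need \(S\) to be \(\sigma\)-finite for Radon--Nikodym. Since \(\partial\Omega\) is compact and locally \((n-1)\)-rectifiable, we get \(S(\partial\Omega)<\infty\), so \(S\) is a finite Borel measure. Radon--Nikodym then gives a nonnegative density \(g\in L^1(\partial\Omega,S)\) with \(d\nu=g\,dS\).

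Next we identify \(g\) with the volume derivative. The space \((\partial\Omega,|\cdot|,S)\) is a metric measure space, and the doubling condition makes it of homogeneous type. On such spaces the Lebesgue differentiation theorem holds, proved through the Vitali covering lemma for doubling measures. Consequently, for \(S\)-a.e. \(t\),
\[
\lim_{r\to0}\frac{\nu(\partial\Omega\cap B(t,r))}{S(\partial\Omega\cap B(t,r))}
=
\lim_{r\to0}\frac{1}{S(\partial\Omega\cap B(t,r))}\int_{\partial\Omega\cap B(t,r)} g\,dS
= g(t).
\]
The quantity \(\nu(\partial\Omega\cap B(t,r))\) equals \(\sigma(\psi_\partial(\partial\Omega\cap B(t,r)))\). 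So the limit exists a.e. and coincides with \(J^\partial_{\varphi^{-1}}(t)\), and \(J^\partial_{\varphi^{-1}}=g\) is measurable and nonnegative. One technical point: \(S(\partial\Omega\cap B(t,r))\) must be positive for every \(t\in\partial\Omega\) and \(r>0\) so that the quotient makes sense. We will take this as implicit in the doubling hypothesis, or else restrict to the support of \(S\), which has full measure.

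Finally we obtain \eqref{eq:weak-cov-inverse}. For \(f=\chi_E\) with \(E\) Borel,
\[
\int_{\partial B}\chi_E\circ\varphi_\partial\,d\sigma
=\sigma(\varphi_\partial^{-1}(E))
=\nu(E)
=\int_E J^\partial_{\varphi^{-1}}\,dS .
\]
Linearity extends this to simple functions, and monotone convergence extends it to all nonnegative Borel \(f\). For merely \(S\)-measurable \(f\), choose a Borel \(\tilde f\) with \(\tilde f=f\) off an \(S\)-null set \(N\). The right-hand side of \eqref{eq:weak-cov-inverse} is unchanged because \(\sigma(\varphi_\partial^{-1}(N))=0\) by \(N^{-1}\), and the left-hand side is unchanged since \(N\) is \(S\)-null.

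We expect the main obstacle to be the differentiation step. The balls are Euclidean balls intersected with a possibly irregular rectifiable set, and doubling is exactly what makes the Vitali-type covering argument, and hence the differentiation theorem, valid. This is the step where we would rely on the cited framework of Vodop'yanov--Ukhlov.
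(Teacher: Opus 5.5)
Your argument is correct. The paper gives no proof of its own and imports the statement from Vodop'yanov--Ukhlov, so the relevant comparison is with their set-function framework, and your route is somewhat different.

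\textbf{Their route.} They work with monotone (countably quasi-)additive set functions on open subsets of a space of homogeneous type. They differentiate $\Phi(U)=\sigma(\psi_\partial(U))$ directly by Vitali-type covering arguments. This yields a.e.\ existence of the volume derivative together with $\int_U J^\partial_{\varphi^{-1}}\,dS\le\Phi(U)$ for open $U$, with no absolute continuity needed. The Luzin $N^{-1}$-property is what then upgrades this inequality to the equality \eqref{eq:weak-cov-inverse}.

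\textbf{Your route.} You observe that $\Phi$ is the restriction of the push-forward Borel measure $\nu=(\varphi_\partial)_\#\sigma$. You use $N^{-1}$ up front to get $\nu\ll S$ and a Radon--Nikodym density. You then identify that density with the volume derivative via the Lebesgue differentiation theorem on the doubling space $(\partial\Omega,S)$.

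\textbf{What each buys.} The set-function route is more general: it is built for quasi-additive set functions arising from composition operators, which need not be measures. Yours is shorter and uses only textbook tools, which suffices here because the set function is an honest measure. It also shows that rectifiability of $\partial\Omega$ is used only to guarantee $\mathcal H^{n-1}(\partial\Omega)<\infty$.

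\textbf{The technical point you flag is automatic.} Suppose $S(\partial\Omega\cap B(t,r))=0$ for some $t\in\partial\Omega$ and $r>0$. Iterating the doubling inequality, which is stated for all radii, gives $S(\partial\Omega)=0$. That is impossible for the boundary of a bounded domain: its orthogonal projection onto any hyperplane contains the projection of $\Omega$, and projection is $1$-Lipschitz. So you need neither an extra assumption nor a restriction to the support of $S$.
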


\subsection{Composition operators and induced compact trace embeddings}

The main tool for transferring problems from the reference domain \(B\) to a
general domain \(\Omega\) is the theory of composition operators on Sobolev
spaces. This theory has been developed over several decades; in the present
context we refer, in particular, to the early work of Ukhlov and
Vodop'yanov--Ukhlov, to the weighted embedding theory of Gol'dshtein--Ukhlov,
and to later applications to spectral and weighted Poincar\'e--Sobolev problems
on singular domains; see, for instance,
\cite{Ukhlov1993, VodopyanovUkhlov1998, GoldshteinUkhlov2009, GoldshteinUkhlov2017, Vodopyanov2024}.

Recall that a homeomorphism $\varphi: B\to\Omega$ is called the \textit{weak $p$-quasiconformal mapping} \cite{GoldshteinGurovRomanov1995, VodopyanovUkhlov1998}, if $\varphi\in W^{1,p}_{loc}(B)$ has finite distortion, i.e. 
$$
    D\varphi(x) = 0 \,\,\,\text{for a.e. on the set } \,\,\{x\in B:J_\varphi(x)=0\}
$$
and 
$$
K_p(\varphi)=\operatorname{esssup}_{B}\frac{|D\varphi(x)|}{|J_\varphi(x)|^{1/p}}<\infty,\,\,1\leq p < \infty.
$$
Note that the weak $p$-quasiconformal mapping $\varphi: B\to\Omega$ has the Luzin $N^{-1}$-property \cite{VodopyanovUkhlov1998} and its Jacobian $J_\varphi>0$ a.e. in $B$.

Let us formulate the following theorem from \cite{VodopyanovUkhlov1998}, adapted to our settings:

\begin{theorem}
\label{comp_w}
Let $\varphi: B\to\Omega$ be a weak $p$-quasiconformal mapping, $1<p<n$, and
$$
J_\varphi \in L^{n/p}(B).
$$ 
Then $\varphi$ generates a bounded composition operator
\[
\varphi^{\ast}:W^{1,p}(\Omega)\to W^{1,p}(B)
\]
by the composition rule $\varphi^{\ast}(u)=u\circ\varphi$.
\end{theorem}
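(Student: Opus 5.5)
The plan is to prove the boundedness of \(\varphi^\ast\) directly, splitting it into a gradient estimate and an \(L^p\) estimate. Throughout, the structural inputs are the distortion bound \(K_p(\varphi)<\infty\), the Luzin \(N^{-1}\)-property, and the change of variables formula \eqref{chvf2}.

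\textbf{Step 1 (gradient estimate).} Take \(u\in W^{1,p}(\Omega)\cap C^\infty(\Omega)\), which is dense in \(W^{1,p}(\Omega)\) by Meyers--Serrin. Since \(\varphi\in W^{1,p}_{\mathrm{loc}}(B)\), the chain rule gives \(u\circ\varphi\in W^{1,1}_{\mathrm{loc}}(B)\) with
\[
|\nabla(u\circ\varphi)(x)|\le |\nabla u|(\varphi(x))\,|D\varphi(x)|
\qquad\text{a.e. in }B.
\]
Using \(|D\varphi|^p\le K_p(\varphi)^p J_\varphi\) and then the change of variables for \(\varphi\) (area formula; \(\varphi\) is a Sobolev homeomorphism, so it has the needed properties up to a null set, and the inequality direction suffices), we get
\[
\int_B|\nabla(u\circ\varphi)|^p\,dx
\le K_p(\varphi)^p\int_B|\nabla u|^p(\varphi(x))\,J_\varphi(x)\,dx
\le K_p(\varphi)^p\int_\Omega|\nabla u|^p\,dy .
\]
This gives the seminorm bound \(\|\nabla(u\circ\varphi)\|_{L^p(B)}\le K_p(\varphi)\|\nabla u\|_{L^p(\Omega)}\).

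\textbf{Step 2 (\(L^p\) estimate).} This is the main obstacle, and it is where \(J_\varphi\in L^{n/p}(B)\) must enter. Only the direct Jacobian \(J_\varphi\) is controlled, so estimating \(\int_B |u\circ\varphi|^p\,dx=\int_\Omega |u|^p J_{\varphi^{-1}}\,dy\) by \eqref{chvf2} would require control of \(J_{\varphi^{-1}}\), which we do not have. Instead I would work in \(B\).

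First, apply the Poincar\'e inequality (Proposition~\ref{prop:poincare-ball}) to \(v=u\circ\varphi\). Together with Step 1 this controls \(\|v-v_B\|_{L^p(B)}\), where \(v_B\) is the mean of \(v\) over \(B\). It remains to bound the constant \(|v_B|\).

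To do so, pick the auxiliary weight \(w=J_\varphi/\|J_\varphi\|_{L^1(B)}\). Note \(\|J_\varphi\|_{L^1(B)}\le|\Omega|\) is finite, and \(w\in L^{n/p}(B)\). By Sobolev embedding on the ball, \(W^{1,p}(B)\hookrightarrow L^{p^*}(B)\), and \(p^*\) is exactly the exponent dual to \(n/p\) in the pairing \(\int |f|^p w\):
\[
\frac{1}{n/p}+\frac{p}{p^*}=\frac pn+\frac{n-p}{n}=1 .
\]
Hence, by H\"older,
\[
\int_B|v-v_B|^p w\,dx\le \|w\|_{L^{n/p}}\,\|v-v_B\|_{L^{p^*}}^p\le C\|\nabla v\|_{L^p(B)}^p .
\]
On the other hand, by change of variables,
\[
\int_B |v|^p w\,dx=\|J_\varphi\|_{L^1(B)}^{-1}\int_\Omega|u|^p\,dy ,
\]
which is controlled by \(\|u\|_{L^p(\Omega)}^p\). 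By the triangle inequality in \(L^p(B,w)\), using \(\int_B w\,dx=1\), we obtain
\[
|v_B|\le \|v\|_{L^p(B,w)}+\|v-v_B\|_{L^p(B,w)}\le C\bigl(\|u\|_{L^p(\Omega)}+\|\nabla u\|_{L^p(\Omega)}\bigr).
\]

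\textbf{Step 3 (conclusion).} Combining the two steps gives
\[
\|u\circ\varphi\|_{W^{1,p}(B)}\le C\|u\|_{W^{1,p}(\Omega)}
\]
for smooth \(u\). For general \(u\in W^{1,p}(\Omega)\), take smooth approximations \(u_k\to u\). Since \(\varphi\) has the \(N^{-1}\)-property, \(u_k\circ\varphi\to u\circ\varphi\) a.e. along a subsequence, so the limit is well defined. The images \(u_k\circ\varphi\) are Cauchy in \(W^{1,p}(B)\) by linearity and the bound just proved, so \(u\circ\varphi\in W^{1,p}(B)\) and satisfies the same bound.
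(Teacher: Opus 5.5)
Your proof is correct. The paper does not prove this statement; it quotes it from Vodop'yanov--Ukhlov. Your argument is the standard one behind that citation. Step~1 is the sufficiency half of the Vodop'yanov--Ukhlov bound for the seminormed space $L^{1,p}$. Step~2 is the usual Gol'dshtein--Ukhlov passage to the full $W^{1,p}$ norm: you combine the $(p^*,p)$-Sobolev--Poincar\'e inequality on $B$ with H\"older's inequality against $J_\varphi$. This is exactly why the hypothesis reads $J_\varphi\in L^{p^*/(p^*-p)}(B)=L^{n/p}(B)$.

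Three small points should be tidied up, though none is a real gap.

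\emph{First}, in Step~2 the identity
\[
\int_B|v|^pw\,dx=\|J_\varphi\|_{L^1(B)}^{-1}\int_\Omega|u|^p\,dy
\]
should be an inequality $\le$. A weak $p$-quasiconformal map with $p<n$ need not have the Luzin $N$-property, so only the area-formula inequality is available. That direction is all you use.

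\emph{Second}, before forming the mean $v_B$ you need $v\in L^1(B)$. A priori you only know $v\in W^{1,p}_{\mathrm{loc}}(B)$ with $\nabla v\in L^p(B)$. On the ball this does imply $v\in W^{1,p}(B)$, a standard fact for Lipschitz (indeed John) domains. Alternatively, run the Sobolev--Poincar\'e estimate on $B(0,r)$, whose constant is scale invariant at the critical exponent $p^*$, and let $r\to1$.

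\emph{Third}, dividing by $\|J_\varphi\|_{L^1(B)}$ requires $\|J_\varphi\|_{L^1(B)}>0$. This holds: if $J_\varphi=0$ a.e., finite distortion would force $D\varphi=0$ a.e., so $\varphi$ would be constant on the connected set $B$, contradicting injectivity.
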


We next transfer compact Sobolev embeddings from \(B\) to \(\Omega\) by means of
the volume derivative \(J_{\varphi^{-1}}\) of the inverse mapping.

\begin{theorem}[Weighted embedding and Poincar\'e inequality induced by \(\varphi\)]
\label{thm:bulk-jacobian}
Let $\varphi: B\to\Omega$ be a weak $p$-quasiconformal mapping, $1<p<n$, $1\le q<\frac{np}{n-p}$.

Then the embedding
\[
W^{1,p}(\Omega)\hookrightarrow L^q(\Omega,J_{\varphi^{-1}})
\]
is compact and \(\Omega\) is an \(J_{\varphi^{-1}}\)-weighted
\((p,q)\)-Poincar\'e domain, i.e. there exists a constant \(C=C(\varphi,p,q)>0\)
such that
\[
\inf_{c\in\mathbb R}\|u-c\|_{L^q(\Omega,J_{\varphi^{-1}})} \le C\,\|\nabla u\|_{L^p(\Omega)} \qquad\text{for all }u\in W^{1,p}(\Omega).
\]
\end{theorem}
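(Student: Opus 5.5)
The plan is to transfer everything to the ball by composition and use the change-of-variables formula \eqref{chvf2}. Fix \(u\in W^{1,p}(\Omega)\). By Theorem~\ref{comp_w} (reading the hypothesis \(J_\varphi\in L^{n/p}(B)\) as part of the standing setting; otherwise we use the weak \(p\)-quasiconformality directly through the pointwise estimate \(|D\varphi|^p\le K_p(\varphi)^p J_\varphi\)), the function \(v:=u\circ\varphi\) belongs to \(W^{1,p}(B)\). Moreover, a.e. in \(B\) we have
\[
|\nabla v|^p\le |\nabla u|^p\circ\varphi\,|D\varphi|^p\le K_p(\varphi)^p\,|\nabla u|^p\circ\varphi\,J_\varphi .
\]
Integrating and using the change of variables for the Sobolev homeomorphism \(\varphi\) (area formula, which holds as an inequality without the Luzin \(N\)-property), we get
\[
\|\nabla v\|_{L^p(B)}\le K_p(\varphi)\|\nabla u\|_{L^p(\Omega)} .
\]
This is the key quantitative bound. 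It only controls the gradient, which is exactly what the Poincaré inequality needs.

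For the weighted norm, apply \eqref{chvf2} with \(f=|u-c|^q\). This is legitimate because weak \(p\)-quasiconformal maps have the Luzin \(N^{-1}\)-property. It gives
\[
\int_\Omega |u-c|^q J_{\varphi^{-1}}\,dy=\int_B |v-c|^q\,dx .
\]
Choose \(c=v_B:=|B|^{-1}\int_B v\,dx\). By Proposition~\ref{prop:poincare-ball}, \(v-v_B\in W^{1,p}(B)\) with \(\|v-v_B\|_{W^{1,p}(B)}\le C\|\nabla v\|_{L^p(B)}\). By the Sobolev embedding on the ball, namely Proposition~\ref{prop:rellich-ball} for \(q<p^*\), we have \(\|v-v_B\|_{L^q(B)}\le C\|v-v_B\|_{W^{1,p}(B)}\). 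Chaining these estimates yields
\[
\inf_c\|u-c\|_{L^q(\Omega,J_{\varphi^{-1}})}\le C(n,p,q)K_p(\varphi)\|\nabla u\|_{L^p(\Omega)},
\]
which is the Poincaré inequality.

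For compactness, let \((u_k)\) be bounded in \(W^{1,p}(\Omega)\). Write \(u_k=(u_k-c_k)+c_k\), where \(c_k=(v_k)_B\). The first step shows that \(v_k-c_k\) is bounded in \(W^{1,p}(B)\). For the constants, note that \(J_{\varphi^{-1}}\in L^1(\Omega)\) with total mass \(|B|\) by \eqref{chvf2} with \(f=1\), so \(L^q(\Omega,J_{\varphi^{-1}})\) contains constants and is a finite measure space. To bound \(c_k\), compare with the unweighted norm. Pick a small ball \(D\Subset\Omega\); then \(\varphi^{-1}(D)\Subset B\) and \(\varphi\) is a homeomorphism there. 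Averaging \(u_k-c_k\) over \(D\) and using \(\|u_k\|_{L^p(D)}\) bounded gives boundedness of \(c_k\), provided \(\|u_k-c_k\|_{L^1(D)}\) is controlled by the \(W^{1,p}\) norm of \(u_k\). I expect this local comparison between Lebesgue measure on \(D\) and the pushforward measure to be the main obstacle.

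To handle it, I will first try the following. On a compact subset, Theorem~\ref{comp_w} applied to the restriction shows \(u_k\circ\varphi\) bounded in \(W^{1,p}(\varphi^{-1}(D'))\) for slightly larger \(D'\). The standard alternative is to work with \(\|v_k\|_{L^p(B)}\) directly, using the boundedness of \(\varphi^*:W^{1,p}(\Omega)\to W^{1,p}(B)\) from Theorem~\ref{comp_w}. This gives \(v_k\) bounded in \(W^{1,p}(B)\) outright, so no separate treatment of constants is needed.

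Granting that, \((v_k)\) is bounded in \(W^{1,p}(B)\). Rellich--Kondrachov (Proposition~\ref{prop:rellich-ball}) gives a subsequence converging in \(L^q(B)\). Since \eqref{chvf2} makes \(u\mapsto u\circ\varphi\) an isometry \(L^q(\Omega,J_{\varphi^{-1}})\to L^q(B)\), \((u_k)\) is Cauchy, hence convergent, in \(L^q(\Omega,J_{\varphi^{-1}})\). This completes the plan. The only delicate point is ensuring the zero-order bound for \(v_k\), which is where Theorem~\ref{comp_w} is used rather than merely the gradient estimate.
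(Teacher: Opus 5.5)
Your proposal is correct and follows the paper's proof. Both transfer to $B$ via \eqref{chvf2}, apply the Poincar\'e inequality on the ball together with $\|\nabla(u\circ\varphi)\|_{L^p(B)}\le K_p(\varphi)\|\nabla u\|_{L^p(\Omega)}$, and obtain compactness from the boundedness of $\varphi^\ast$ (Theorem~\ref{comp_w}), Rellich--Kondrachov, and the isometry $u\mapsto u\circ\varphi$. The detour through a local comparison on $D\Subset\Omega$ is unnecessary once you invoke Theorem~\ref{comp_w}, and your observation that this step tacitly requires $J_\varphi\in L^{n/p}(B)$ (supplied by (A2) in the admissible setting but missing from the statement) applies equally to the paper's own proof.
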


\begin{proof}
By the change-of-variables formula \eqref{chvf2},
\[
\|u\|_{L^q(\Omega,J_{\varphi^{-1}})}=\|u\circ\varphi\|_{L^q(B)}.
\]

Let \(\{u_k\}\) be bounded in \(W^{1,p}(\Omega)\). By Theorem~\ref{comp_w}, the sequence \(v_k:=u_k\circ\varphi\)
is bounded in \(W^{1,p}(B)\). Since \(1\le q<\frac{np}{n-p}\),
the Rellich--Kondrachov theorem on the ball implies that, after passing to a
subsequence,
\[
v_k\to v
\qquad\text{strongly in }L^q(B).
\]
Therefore,
\[
\|u_k-u_m\|_{L^q(\Omega,J_{\varphi^{-1}})}
=
\|v_k-v_m\|_{L^q(B)}\to 0
\qquad (k,m\to\infty),
\]
so \(\{u_k\}\) is Cauchy in \(L^q(\Omega,J_{\varphi^{-1}})\). Since \(L^q(\Omega,J_{\varphi^{-1}})\) is complete,
\(\{u_k\}\) converges strongly in \(L^q(\Omega,J_{\varphi^{-1}})\). Thus the embedding is compact.

To prove Poincar\'e inequality, again by the change-of-variables formula,
\[
\inf_{c\in\mathbb R}\|u-c\|_{L^q(\Omega,J_{\varphi^{-1}})}
=
\inf_{c\in\mathbb R}\|u\circ\varphi-c\|_{L^q(B)}.
\]
By the classical \((p,q)\)-Poincar\'e inequality on the ball,
\[
\inf_{c\in\mathbb R}\|u\circ\varphi-c\|_{L^q(B)}
\le
C_{p,q}(B)\,\|\nabla(u\circ\varphi)\|_{L^p(B)}.
\]
Since $\varphi$ is a measurable weak \(p\)-quasiconformal mapping, we have
\[
\|\nabla(u\circ\varphi)\|_{L^p(B)}
\le
K_p(\varphi)\,\|\nabla u\|_{L^p(\Omega)}.
\]
Combining the above estimates, we obtain
\[
\inf_{c\in\mathbb R}\|u-c\|_{L^q(\Omega,J_{\varphi^{-1}})}
\le
C_{p,q}(B)\,K_p(\varphi)\,\|\nabla u\|_{L^p(\Omega)},
\]
which proves the weighted \((p,q)\)-Poincar\'e inequality.
\end{proof}

\begin{corollary}[Compact embedding into \(L^p(\Omega)\)]
\label{cor:compact-Lp}
Let \(1<p<n\), and let \(\varphi:B\to\Omega\)
be a weak \(p\)-quasiconformal mapping. Assume in addition that
\[
J_\varphi\in L^r(B)
\qquad\text{for some } r>\frac{n}{p}.
\]
Then the embedding
\[
W^{1,p}(\Omega)\hookrightarrow L^p(\Omega)
\]
is compact.
\end{corollary}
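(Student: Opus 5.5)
Since $J_\varphi>0$ a.e., the volume derivative $J_{\varphi^{-1}}(y)$ equals $1/J_\varphi(\varphi^{-1}(y))$ for a.e.\ $y\in\Omega$. Thus, by the change-of-variables formula \eqref{chvf2} with $f=|u|^p J_{\varphi^{-1}}^{-1}$ (understood a.e.), the plan is to write
\[
\int_\Omega |u|^p\,dy=\int_B |u\circ\varphi|^p\,J_\varphi\,dx .
\]
This identity is the starting point. Strictly, one needs \eqref{chvf2} in the direction $\int_\Omega g\,dy=\int_B g\circ\varphi\, J_\varphi\,dx$. This is the standard area formula for Sobolev homeomorphisms: $\varphi\in W^{1,1}_{\rm loc}$ has the Luzin $N$-property off a null set, and for homeomorphisms the inequality $\int_B g\circ\varphi\,J_\varphi \le \int_\Omega g$ always holds, which suffices for upper bounds. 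I will therefore only use the inequality
\[
\|u\|_{L^p(\Omega)}^p\le \int_B |u\circ\varphi|^p J_\varphi\,dx .
\]

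Next I apply H\"older's inequality with exponents $r$ and $r'$:
\[
\int_B |v|^p J_\varphi\,dx\le \|J_\varphi\|_{L^r(B)}\,\|v\|_{L^{pr'}(B)}^p,\qquad v=u\circ\varphi .
\]
The condition $r>n/p$ gives $r'<\frac{n}{n-p}$, hence $pr'<\frac{np}{n-p}=p^*$. Also $J_\varphi\in L^r$ with $r>n/p$ implies $J_\varphi\in L^{n/p}(B)$, since $B$ is bounded. So Theorem~\ref{comp_w} applies and gives $\|v\|_{W^{1,p}(B)}\le C\|u\|_{W^{1,p}(\Omega)}$.

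Now let $\{u_k\}$ be bounded in $W^{1,p}(\Omega)$. Then $v_k=u_k\circ\varphi$ is bounded in $W^{1,p}(B)$. By Proposition~\ref{prop:rellich-ball} with exponent $pr'<p^*$, a subsequence $v_k$ is Cauchy in $L^{pr'}(B)$. Applying the estimate above to the differences $u_k-u_m$, and using $(u_k-u_m)\circ\varphi=v_k-v_m$, gives
\[
\|u_k-u_m\|_{L^p(\Omega)}\le \|J_\varphi\|_{L^r(B)}^{1/p}\|v_k-v_m\|_{L^{pr'}(B)}\to0 .
\]
So $\{u_k\}$ is Cauchy in $L^p(\Omega)$ and converges there, which proves compactness.

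The only delicate point is justifying the change of variables $\int_\Omega g=\int_B g\circ\varphi\,J_\varphi$, or the inequality above, for a homeomorphism of class $W^{1,p}_{\rm loc}$. My plan is to cite the Federer/Haj\l asz area formula: a Sobolev homeomorphism is approximately differentiable a.e., and since it is injective, the multiplicity is at most one, which gives the inequality. Alternatively, one can use $J_{\varphi^{-1}}=1/(J_\varphi\circ\varphi^{-1})$ together with the $N^{-1}$-property, as already recorded for weak $p$-quasiconformal maps.
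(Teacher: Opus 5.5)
Your skeleton is exactly the paper's proof: compose with $\varphi$, apply Theorem~\ref{comp_w}, extract an $L^{pr'}(B)$-Cauchy subsequence by Rellich--Kondrachov, and conclude with H\"older's inequality against $J_\varphi\in L^r(B)$. Your remark that $J_\varphi\in L^r(B)\subset L^{n/p}(B)$ is what makes Theorem~\ref{comp_w} applicable; the paper leaves this implicit. The problem is the step you flag as delicate, because your justification of it runs in the wrong direction. For an injective $W^{1,1}_{\mathrm{loc}}$ map, the area formula of Haj{\l}asz only gives $\int_B g\circ\varphi\,J_\varphi\,dx\le\int_\Omega g\,dy$. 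The defect is $\int_{\varphi(Z)}g\,dy$ for an exceptional null set $Z\subset B$ whose image may have positive measure. That inequality bounds the integral over $B$ by the one over $\Omega$. It does not yield $\|u_k-u_m\|_{L^p(\Omega)}^p\le\int_B|v_k-v_m|^pJ_\varphi\,dx$, which is what your Cauchy argument uses, so your ``therefore'' does not follow. Your alternative route has the same hole. Since $|\varphi^{-1}(\varphi(Z))|=|Z|=0$, the volume derivative $J_{\varphi^{-1}}$ vanishes a.e.\ on $\varphi(Z)$. Hence $J_{\varphi^{-1}}=1/(J_\varphi\circ\varphi^{-1})$ a.e.\ in $\Omega$ fails as soon as $|\varphi(Z)|>0$, and \eqref{chvf2} alone only gives $\int_B|u\circ\varphi|^pJ_\varphi\,dx=\int_{\{J_{\varphi^{-1}}>0\}}|u|^p\,dy$. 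In both versions the missing ingredient is the Luzin $N$-property of $\varphi$ ($|Z|=0\Rightarrow|\varphi(Z)|=0$). This is not the $N^{-1}$-property recorded for weak $p$-quasiconformal maps.

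The gap closes with the hypotheses at hand. By the definition of $K_p(\varphi)$, $|D\varphi|^p\le K_p(\varphi)^pJ_\varphi$ a.e.\ in $B$, so $J_\varphi\in L^r(B)$ gives $|D\varphi|\in L^{pr}(B)$ with $pr>n$. Hence $\varphi\in W^{1,pr}_{\mathrm{loc}}(B)$ with $pr>n$, and continuous maps of this class have the Luzin $N$-property (Marcus--Mizel). The area formula with multiplicity one then yields $\int_\Omega g\,dy=\int_B g\circ\varphi\,J_\varphi\,dx$ for every nonnegative measurable $g$. This is the identity the paper uses, citing only ``the change-of-variables formula''. With this observation in place of your justification, your proof is complete and coincides with the paper's.
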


\begin{proof}
Let \(\{u_k\}\) be a bounded sequence in \(W^{1,p}(\Omega)\), and set \( v_k:=u_k\circ\varphi\).
By Theorem~\ref{comp_w}, the sequence \(\{v_k\}\) is bounded in \(W^{1,p}(B)\).

Let
\[
r'=\frac{r}{r-1}.
\]
Since \(r>\frac{n}{p}\), we have
\[
pr'=\frac{pr}{r-1}<\frac{np}{n-p}=:p^*.
\]
Therefore, by the Rellich--Kondrachov theorem on the ball, after passing to a
subsequence we may assume that
\[
v_k\to v
\qquad\text{strongly in }L^{pr'}(B).
\]

Now let \(k,m\in\mathbb N\). By the change-of-variables formula,
\[
\|u_k-u_m\|_{L^p(\Omega)}^p
=
\int_\Omega |u_k(y)-u_m(y)|^p\,dy
=
\int_B |v_k(x)-v_m(x)|^p J_\varphi(x)\,dx.
\]
Applying H\"older's inequality with exponents \(r\) and \(r'\), we obtain
\[
\|u_k-u_m\|_{L^p(\Omega)}^p
\le
\|J_\varphi\|_{L^r(B)}
\left(
\int_B |v_k-v_m|^{pr'}\,dx
\right)^{1/r'}.
\]
Since \(\{v_k\}\) converges strongly in \(L^{pr'}(B)\), it is a Cauchy sequence in
that space, and hence \(\{u_k\}\) is Cauchy in \(L^p(\Omega)\).

Thus every bounded sequence in \(W^{1,p}(\Omega)\) admits a subsequence convergent
in \(L^p(\Omega)\), which proves the compactness of the embedding
\[
W^{1,p}(\Omega)\hookrightarrow L^p(\Omega).
\]
\end{proof}

Having obtained the bounded composition operator in the bulk, we now pass to the
boundary. The trace on \(\Omega\) will be induced from the classical trace on the
ball via the boundary homeomorphism \(\varphi_\partial\). The corresponding target
space on \(\partial\Omega\) is a weighted Lebesgue space determined by the
boundary volume derivative of \(\varphi_\partial^{-1}\).

\begin{theorem}[Induced compact weighted trace operator]\label{thm:induced-trace-operator}
Let $\Omega \subset \R^n$ be a domain with locally \((n-1)\)-rectifiable boundary $\partial\Omega$ satisfying the doubling condition. Let $\varphi: \overline{B}\to\overline\Omega$ be a weak $p$-quasiconformal mapping and the boundary homeomorphism $\varphi_\partial: \partial B \to \partial\Omega$ satisfy \textit{Luzin $N^{-1}$-property}, \(1<p<n\),\( 1\le q<\frac{p(n-1)}{n-p}\)

Then there exists a compact operator
\[
T:W^{1,p}(\Omega)\to L^q(\partial\Omega,J^\partial_{\varphi^{-1}})
\]
such that for every \(u\in W^{1,p}(\Omega)\),
\[
Tu = \bigl(T_B(u\circ\varphi)\bigr)\circ\varphi^{-1}_\partial \qquad \text{in }L^q(\partial\Omega,J^\partial_{\varphi^{-1}}).
\]
\end{theorem}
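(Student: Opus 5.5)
We define \(T\) as the composition of three maps. For \(u\in W^{1,p}(\Omega)\), first apply the composition operator \(\varphi^*\) of Theorem~\ref{comp_w}, obtaining \(u\circ\varphi\in W^{1,p}(B)\). Next apply the classical trace \(T_B:W^{1,p}(B)\to W^{1-1/p,p}(\partial B)\), followed by the compact embedding \(W^{1-1/p,p}(\partial B)\hookrightarrow L^q(\partial B)\) of Proposition~\ref{compsph}. Finally, pull back along \(\varphi_\partial^{-1}\) using the operator
\[
R:L^q(\partial B)\to L^q(\partial\Omega,J^\partial_{\varphi^{-1}}),\qquad Rg:=g\circ\varphi_\partial^{-1}.
\]
Setting \(T:=R\circ\iota\circ T_B\circ\varphi^*\), the asserted formula \(Tu=(T_B(u\circ\varphi))\circ\varphi_\partial^{-1}\) holds by construction.

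The key step is to show that \(R\) is well defined and isometric. We apply the boundary change-of-variables formula \eqref{eq:weak-cov-inverse} of Proposition~\ref{cov-bound} to the nonnegative function \(f=|g\circ\varphi_\partial^{-1}|^q\), which gives
\[
\int_{\partial\Omega}|g\circ\varphi_\partial^{-1}|^q J^\partial_{\varphi^{-1}}\,dS=\int_{\partial B}|g|^q\,d\sigma .
\]
Two points must be checked for this to make sense. The first is measurability: \(\varphi_\partial\) is a homeomorphism, so \(g\circ\varphi_\partial^{-1}\) is Borel whenever \(g\) is a Borel representative. The second is independence of the representative. If \(g_1=g_2\) \(\sigma\)-a.e., then the same identity applied to \(f=\chi_{\{g_1\circ\varphi_\partial^{-1}\ne g_2\circ\varphi_\partial^{-1}\}}\) shows that the compositions agree \(J^\partial_{\varphi^{-1}}dS\)-a.e. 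Hence \(R\) is a linear isometry into \(L^q(\partial\Omega,J^\partial_{\varphi^{-1}})\). The Luzin \(N^{-1}\)-property and the doubling condition enter only through Proposition~\ref{cov-bound}, which guarantees the existence of \(J^\partial_{\varphi^{-1}}\) and the validity of the formula.

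With \(R\) in hand, compactness follows quickly. The operator \(\varphi^*\) is bounded by Theorem~\ref{comp_w}; this theorem requires \(J_\varphi\in L^{n/p}(B)\). The statement here does not list that assumption explicitly, so we take it as part of the weak \(p\)-quasiconformal setting, as in the admissibility hypotheses. The trace \(T_B\) is bounded, \(\iota\) is compact, and \(R\) is an isometry. A product of bounded operators with one compact factor is compact, so \(T\) is compact.

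One could ask whether \(T\) agrees with restriction to \(\partial\Omega\) for functions continuous up to the boundary. If needed, this follows because \(u\circ\varphi\) is continuous on \(\overline B\), so \(T_B(u\circ\varphi)=u\circ\varphi_\partial\). The statement, however, only asks for existence of \(T\) with the displayed formula.

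The main obstacle is the well-definedness of \(R\) on equivalence classes together with the applicability of \eqref{eq:weak-cov-inverse}. Everything else is formal composition of results already established in the paper.
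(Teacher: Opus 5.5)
Your proposal is correct and is essentially the paper's proof. The paper likewise writes $T=(\varphi_\partial^{-1})^*\circ T_B\circ\varphi^*$, uses \eqref{eq:weak-cov-inverse} to see that the pullback $L^q(\partial B)\to L^q(\partial\Omega,J^\partial_{\varphi^{-1}})$ is bounded, and gets compactness from the compact embedding $W^{1-1/p,p}(\partial B)\hookrightarrow L^q(\partial B)$. Your extra checks (Borel representatives, independence of the representative, isometry of the pullback) make the same argument more careful, and so does your remark that Theorem~\ref{comp_w} needs $J_\varphi\in L^{n/p}(B)$, which the paper's proof also uses without stating it (it holds under (A2) since $B$ is bounded).
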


\begin{proof}
By Theorem~\ref{comp_w}, the composition operator
\[
\varphi^*:W^{1,p}(\Omega)\to W^{1,p}(B)
\]
is bounded.
By the trace theorem on the ball,
\[
T:W^{1,p}(B)\to W^{1-1/p,p}(\partial B)
\]
is bounded, and by the compact trace embedding on the sphere,
\[
W^{1-1/p,p}(\partial B)\hookrightarrow L^q(\partial B)
\]
is compact for \(1\le q<\frac{p(n-1)}{n-p}\).

Next, by the boundary change-of-variables formula \eqref{eq:weak-cov-inverse}, the operator
\[
(\varphi^{-1}_\partial)^*:L^q(\partial B)\to L^q(\partial\Omega,J_{\varphi^{-1}}^\partial),
\qquad
(\varphi^{-1}_\partial)^*(g)=g\circ\varphi^{-1}_\partial,
\]
is well defined and bounded.

Therefore the composition
\[
(\varphi^{-1}_\partial)^*\circ T_B\circ \varphi^*
\]
defines a bounded operator
\[
T:W^{1,p}(\Omega)\to L^q(\partial\Omega,J_{\varphi^{-1}}^\partial).
\]
Since the middle embedding through \(W^{1-1/p,p}(\partial B)\) is compact, the
operator \(T\) is compact as well.
\end{proof}

\begin{remark}
If, in addition, \(\Omega\) is a bounded Lipschitz domain, then the induced trace
operator \(T\) coincides with the classical trace operator on
\(\Omega\).
Indeed, for every \(u\in C(\overline\Omega)\cap W^{1,p}(\Omega)\), one has
\[
Tu=u|_{\partial\Omega},
\]
while the classical trace operator has the same property. Since both operators are
continuous on \(W^{1,p}(\Omega)\), they coincide on the whole space.
\end{remark}

\subsection{Admissible domains and induced concentrating weights}

We now collect the standing assumptions on the domain \(\Omega\) and on the
transfer mapping \(\varphi\). Unless stated otherwise, these assumptions will
be in force throughout the paper.

\begin{definition}[Admissible transfer mapping]
\label{def:trace-compatible-transfer}
Let \(1<p<n\), let \(\Omega\subset\mathbb R^n\) be a bounded domain with
locally \((n-1)\)-rectifiable boundary, and let
\[
\varphi:\overline B\to\overline\Omega
\]
be a homeomorphism. We say that \(\varphi\) is a
\emph{admissible transfer mapping} if the following
properties hold:
\begin{enumerate}
\item[(A1)] the restriction \(\varphi|_B:B\to\Omega\) is a weak
\(p\)-quasiconformal mapping;

\item[(A2)] \(J_\varphi\in L^r(B)\) for some \(r>\dfrac{n}{p}\);

\item[(A3)] the boundary map
\[
\varphi_\partial:=\varphi|_{\partial B}:\partial B\to\partial\Omega
\]
satisfies the Luzin \(N^{-1}\)-property, i.e.
\[
\mathcal H^{n-1}(E)=0,\quad E\subset\partial\Omega
\qquad\Longrightarrow\qquad
\sigma(\varphi_\partial^{-1}(E))=0 .
\]
\end{enumerate}
\end{definition}

\begin{definition}[Admissible domain]
\label{def:admissible-domain}
We say that a bounded domain \(\Omega\subset\mathbb R^n\) is
\emph{admissible} if the following properties hold:
\begin{enumerate}
\item[(B1)] \(\partial\Omega\) is locally \((n-1)\)-rectifiable;

\item[(B2)] \(\partial\Omega\) satisfies the doubling condition;

\item[(B3)] \(\Omega\) is the image of the unit ball \(B\) under a
admissible transfer mapping.
\end{enumerate}
\end{definition}

The class of admissible domains is fairly broad, but it is not completely
geometric in nature: it is defined through the existence of a transfer map
\(\varphi\) with the properties listed above.

A basic source of examples is given by bilipschitz images of the unit ball.
A second basic family is formed by the standard power-type outward cuspidal
domains. For these domains one has explicit weak
\(p\)-quasiconformal parametrizations \(\varphi:B\to\Omega\) such that
\(J_\varphi\in L^r(B)\) for some \(r>n/p\), and the boundary satisfies the
required rectifiability and doubling assumptions (see, e.g. \cite{GarainGoldshteinUkhlov2025, MenovschikovUkhlov2026, LambertiUkhlov2026}).

On the other hand, several natural classes of domains are excluded.
Fractal-type boundaries, such as the Koch snowflake, are not admissible since
they fail local \((n-1)\)-rectifiability. Multiply connected domains, annuli,
and domains with holes are not admissible because condition \textup{(B3)}
requires \(\overline\Omega\) to be the image of \(\overline B\) under a
homeomorphism.

\begin{remark}
The assumption that \(\varphi\) extends to a homeomorphism
\[
\varphi:\overline B\to\overline\Omega
\]
is stronger than what is actually used in the proofs. In fact, for the boundary
part of the argument it would be enough to have, in addition to the interior
homeomorphism \(\varphi:B\to\Omega\), a bijection
\[
\varphi_\partial:\partial B\to\partial\Omega
\]
with measurable inverse \(\psi_\partial=\varphi_\partial^{-1}\), satisfying the
Luzin \(N^{-1}\)-property and the boundary change-of-variables formula, and
compatible with the interior map in the natural sense that it represents the
boundary values of \(\varphi\).

We nevertheless keep the stronger homeomorphic extension hypothesis,
since it is geometrically natural and allows us to avoid introducing a separate
compatibility condition between the interior and boundary maps.
\end{remark}

We now define the class of concentrating bulk weights and the induced boundary
weight.

\begin{definition}[Admissible weights]
\label{def:admissible-weights}
Let \(\alpha\in L^\infty(\partial B)\), \(\alpha\ge0\), \(\alpha\not\equiv0\),
and let \(\widetilde\alpha\) be its radial extension to \(B\):
\[
\widetilde\alpha(x)=
\begin{cases}
\alpha(x/|x|), & x\in B\setminus\{0\},\\
0, & x=0.
\end{cases}
\]
For \(a\in(0,1]\), define
\[
\rho_a(x):=\frac{n}{a}|x|^{n/a-n},
\qquad
\mu_a(x):=\widetilde\alpha(x)\rho_a(x),
\qquad x\in B.
\]
The corresponding induced weights on \(\partial\Omega\) and \(\Omega\) are
\[
\beta(t):=\alpha(\varphi_\partial^{-1}(t))\,J^\partial_{\varphi^{-1}}(t),
\qquad t\in\partial\Omega,
\]
and
\[
\gamma_a(y):=\widetilde\alpha(\varphi^{-1}(y))\,\rho_a(\varphi^{-1}(y))\,
J_{\varphi^{-1}}(y),
\qquad y\in\Omega.
\]
We call the triple \((\mu_a,\gamma_a,\beta)\) the family of \textit{admissible weights}
associated with \(\alpha\) and \(\varphi\).
\end{definition}

By \eqref{chvf2},
\[
\int_\Omega f(y)\,\gamma_a(y)\,dy
=
\int_B f(\varphi(x))\,\mu_{a}(x)\,dx
\]
for every measurable \(f:\Omega\to\mathbb R\) for which the integrals are well defined.

The total mass of all induced weighted measures coincides with the total mass of
the initial boundary weight \(\alpha\), and by \eqref{chvf2} and \eqref{eq:weak-cov-inverse}
\[
\int_B \mu_{a}\,dx
=
\int_\Omega \gamma_a\,dy
=
\int_{\partial\Omega}\beta\,dS
=
\int_{\partial B}\alpha\,d\sigma
=:A_\alpha>0.
\]
In particular, all induced weights are nontrivial and have finite positive mass.

Since \(\alpha\in L^\infty(\partial B)\) and, for every fixed \(a\in(0,1]\), the radial density
\(\rho_a\) is bounded on \(B\), the induced bulk weight \(\gamma_a\) differs from
\(J_{\varphi^{-1}}\) only by a bounded nonnegative factor. Likewise, the induced boundary
weight \(\beta\) differs from \(J^\partial_{\varphi^{-1}}\) only by the bounded factor
\(\alpha\circ\varphi_\partial^{-1}\). Therefore, by Lemma~\ref{lem:bounded-weight-embedding}, all compactness and continuity
statements established above for the weights \(J_{\varphi^{-1}}\) and \(J^\partial_{\varphi^{-1}}\)
remain valid for \(\gamma_a\) and \(\beta\) as well (for fixed \(a\) in the bulk case). 

\begin{remark}
The assumptions \(\alpha\in L^\infty(\partial B)\) and
\[
\rho_a(x)=\frac{n}{a}\,|x|^{n/a-n}
\]
are chosen for clarity rather than sharpness. The argument extends to more general
boundary weights and concentrating families. In particular, the condition
\(\alpha\in L^\infty(\partial B)\) may be weakened to \(\alpha\in L^r(\partial B)\),
\(r>1\), provided
\[
q\,r'<\frac{p(n-1)}{n-p},
\qquad r'=\frac{r}{r-1},
\]
so that the compact trace embedding on \(\partial B\) still yields compactness in
\(L^q(\partial B,\alpha\,d\sigma)\) via H\"older's inequality.

Similarly, the specific densities \(\rho_a\) may be replaced by other concentrating
densities, or even by a family of finite measures in \(B\), as long as the proof still
has the following three ingredients: concentration to the boundary measure
\(\alpha\,d\sigma\) as \(a\to0\), a boundary-layer estimate of the form
\[
\|w\|_{L^q(B,\mu_a)} \le \varepsilon_a \|\nabla w\|_{L^p(B)},
\qquad \varepsilon_a\to0,
\qquad w\in W^{1,p}_0(B),
\]
and compatibility with the boundary approximation scheme on the ball (in the
present setting, through the radial extension and the Poisson semigroup).
\end{remark}

\section{Variational setting for the weighted Neumann and Steklov problems}

In this section we recall the variational framework for the weighted Neumann and weighted Steklov problems associated with the weights \(\gamma_a\) in \(\Omega\) and \(\beta\) on \(\partial\Omega\). The strong formulations below are written only to indicate the structure of the corresponding Euler--Lagrange equations. In the present paper, the actual objects are weak solutions in \(W^{1,p}(\Omega)\). 

We write
\[
\Delta_p u:=\operatorname{div}(|\nabla u|^{p-2}\nabla u).
\]

Let $\Omega \subset \R^n$ be admissible. The Neumann \((p,q)\)-problem with weight $\gamma_a$ is formally written as
\begin{equation}\label{eq:strong-neumann-pq}
\begin{cases}
-\Delta_pu
=
\lambda\,\|u\|_{L^q(\Omega,\gamma_a)}^{\,p-q}\,
|u|^{q-2}u\,\gamma_a
& \text{in }\Omega,
\\[0.4em]
|\nabla u|^{p-2}\nabla u\cdot \nu = 0
& \text{on }\partial\Omega.
\end{cases}
\end{equation}

The corresponding Steklov \((p,q)\)-problem with weight $\beta$ is formally written as
\begin{equation}\label{eq:strong-steklov-pq}
\begin{cases}
-\Delta_pu=0
& \text{in }\Omega,
\\[0.4em]
|\nabla u|^{p-2}\nabla u\cdot \nu
=
\lambda\,\|Tu\|_{L^q(\partial\Omega,\beta)}^{\,p-q}\,
|Tu|^{q-2}Tu\,\beta
& \text{on }\partial\Omega.
\end{cases}
\end{equation}

\subsection{The first non-trivial Neumann eigenvalue}

A pair \((\lambda,u)\in \mathbb R\times W^{1,p}(\Omega)\), \(u\not\equiv 0\), is called
a weak eigenpair of \eqref{eq:strong-neumann-pq} if
\begin{equation}\label{eq:weak-neumann-pq}
\int_\Omega |\nabla u|^{p-2}\nabla u\cdot \nabla v\,dy
=
\lambda\,\|u\|_{L^q(\Omega,\gamma_a)}^{\,p-q}
\int_\Omega |u|^{q-2}u\,v\,\gamma_a\,dy
\end{equation}
for every \(v\in W^{1,p}(\Omega)\).

Further we define reduced variational classes.
For \(a\in(0,1]\), define
\[
V^N_{p,q}(\gamma_a)
:=
\left\{
u\in W^{1,p}(\Omega)\setminus\{0\}:
\int_\Omega |u|^{q-2}u\,\gamma_a\,dy=0
\right\},
\]
and
\begin{equation}\label{eq:def-LambdaN}
\Lambda^N_{p,q}(\gamma_a)
:=
\inf\left\{
\frac{\displaystyle\int_\Omega |\nabla u|^p\,dy}
{\left(\displaystyle\int_\Omega |u|^q\gamma_a\,dy\right)^{p/q}}
:\,
u\in V^N_{p,q}(\gamma_a)
\right\}.
\end{equation}

The orthogonality condition in the definition of \(V^N_{p,q}(\gamma_a)\)
removes constants and singles out the first nontrivial variational level of the
Neumann problem. Indeed, for
\[
F_u(c):=\int_\Omega |u-c|^q\,\gamma_a\,dy,
\]
the minimizer in \(c\in\mathbb R\) is uniquely characterized by
\[
\int_\Omega |u-c|^{q-2}(u-c)\,\gamma_a\,dy=0,
\]
so for \(u\in V^N_{p,q}(\gamma_a)\) the unique minimizing constant is \(c=0\).

We start with the generalization of the standard Friedrichs inequality.

\begin{proposition}[Friedrichs inequality on the reduced Neumann class]
\label{prop:friedrichs-neumann}
Let \(1<p<n\), \(1<q<\frac{pn}{n-p}\), and $\Omega \subset \R^n$ be admissible.

Then there exists a constant \(C_a>0\) such that
\[
\|u\|_{L^p(\Omega)}\le C_a \|\nabla u\|_{L^p(\Omega)}
\]
for every \(u\in W^{1,p}(\Omega)\) satisfying
\[
\int_\Omega |u|^{q-2}u\,\gamma_a\,dy=0.
\]
\end{proposition}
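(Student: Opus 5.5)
We argue by contradiction, using the standard compactness scheme and the two compact embeddings already available on admissible domains.

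Suppose the inequality fails for a fixed \(a\in(0,1]\). Then there are \(u_k\in W^{1,p}(\Omega)\) with
\[
\int_\Omega |u_k|^{q-2}u_k\,\gamma_a\,dy=0,\qquad \|u_k\|_{L^p(\Omega)}=1,\qquad \|\nabla u_k\|_{L^p(\Omega)}\to0 .
\]
The constraint is invariant under multiplication by positive scalars, so this normalization is allowed. The sequence \(\{u_k\}\) is bounded in \(W^{1,p}(\Omega)\).

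By Corollary~\ref{cor:compact-Lp}, which applies because of (A1) and (A2), a subsequence converges in \(L^p(\Omega)\) to some \(u\). Since the gradients tend to zero in \(L^p\), the weak gradient of \(u\) vanishes: for every test field,
\[
\int u\,\operatorname{div}\Phi\,dy=\lim_k\int u_k\,\operatorname{div}\Phi\,dy=-\lim_k\int\nabla u_k\cdot\Phi\,dy=0 .
\]
As \(\Omega\) is connected, \(u\equiv c\) is constant. Moreover \(\|u\|_{L^p(\Omega)}=1\), so \(c\neq0\); indeed \(|c|=|\Omega|^{-1/p}\). In particular \(u_k\to c\) strongly in \(W^{1,p}(\Omega)\).

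Next we pass to the limit in the constraint. By Theorem~\ref{thm:bulk-jacobian}, the embedding \(W^{1,p}(\Omega)\hookrightarrow L^q(\Omega,J_{\varphi^{-1}})\) is continuous, using \(q<np/(n-p)\). For fixed \(a\), the factor \(\widetilde\alpha\,\rho_a\circ\varphi^{-1}\) is bounded by \(\|\alpha\|_\infty\, n/a\). Lemma~\ref{lem:bounded-weight-embedding} therefore gives \(u_k\to c\) in \(L^q(\Omega,\gamma_a)\). The map \(t\mapsto|t|^{q-2}t\) sends convergent sequences in \(L^q(\gamma_a)\) to convergent sequences in \(L^{q'}(\gamma_a)\); this is the continuity of the Nemytskii operator, via an a.e. subsequence and Vitali, or via the elementary inequality for \(|t|^{q-2}t\). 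Since \(\gamma_a\) has finite mass \(A_\alpha\), we may integrate against \(1\in L^q(\gamma_a)\) and obtain
\[
0=\int_\Omega |u_k|^{q-2}u_k\gamma_a\,dy\to |c|^{q-2}c\,A_\alpha .
\]
Because \(A_\alpha>0\), this forces \(c=0\), a contradiction. Hence \(C_a\) exists.

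The only delicate point is this passage to the limit in the nonlinear constraint. It requires convergence in the \(\gamma_a\)-weighted \(L^q\) space rather than merely in \(L^p(\Omega)\), which is exactly why we invoke Theorem~\ref{thm:bulk-jacobian} together with the boundedness of \(\rho_a\) for fixed \(a\). The constant \(C_a\) may blow up as \(a\to0\); the argument gives no control on it, and none is claimed.
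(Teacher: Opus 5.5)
Your proof is correct and follows the paper's argument essentially step by step. Both proceed by contradiction with the normalization \(\|u_k\|_{L^p(\Omega)}=1\), \(\|\nabla u_k\|_{L^p(\Omega)}\to0\), use the compact embedding of Corollary~\ref{cor:compact-Lp} to get a nonzero constant limit, and then pass to the limit in the orthogonality constraint via the embedding into \(L^q(\Omega,\gamma_a)\) and the continuity of \(t\mapsto|t|^{q-2}t\), which forces \(c=0\). The only difference is minor: you first observe that \(u_k\to c\) strongly in \(W^{1,p}(\Omega)\), so mere continuity of the weighted embedding suffices and the limit in \(L^q(\Omega,\gamma_a)\) is identified at once, whereas the paper invokes compactness of that embedding again and passes to a further subsequence.
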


\begin{proof}
Assume, by contradiction, that the stated inequality is false. Then there exists a
sequence \(\{u_k\}\subset W^{1,p}(\Omega)\) such that
\[
\int_\Omega |u_k|^{q-2}u_k\,\gamma_a\,dy=0,
\qquad
\|u_k\|_{L^p(\Omega)} > k \|\nabla u_k\|_{L^p(\Omega)}.
\]
After normalization we may assume that
\[
\|u_k\|_{L^p(\Omega)}=1,
\qquad
\|\nabla u_k\|_{L^p(\Omega)}\to 0.
\]
Hence \(\{u_k\}\) is bounded in \(W^{1,p}(\Omega)\). By Corolary \ref{cor:compact-Lp} the embedding
\(W^{1,p}(\Omega)\hookrightarrow L^p(\Omega)\) is compact and after passing to a subsequence,
\[
u_k\to u
\qquad\text{strongly in }L^p(\Omega).
\]
Since \(\|\nabla u_k\|_{L^p(\Omega)}\to0\), it follows that \(\nabla u=0\) a.e. in
\(\Omega\), and therefore \(u\equiv c\) is constant, because \(\Omega\) is connected.

On the other hand, by Theorem \ref{thm:bulk-jacobian} the embedding
\[
W^{1,p}(\Omega)\hookrightarrow L^q(\Omega,\gamma_a),
\]
is compact and after passing to a further subsequence if necessary,
\[
u_k\to u
\qquad\text{strongly in }L^q(\Omega,\gamma_a).
\]
Since the Nemytskii map
\[
\Phi_q(t):=|t|^{q-2}t
\]
is continuous from \(L^q(\Omega,\gamma_a)\) to \(L^{q'}(\Omega,\gamma_a)\),
\(q'=\frac{q}{q-1}\), we may pass to the limit in the orthogonality condition:
\[
0=\lim_{k\to\infty}\int_\Omega |u_k|^{q-2}u_k\,\gamma_a\,dy
=
\int_\Omega |u|^{q-2}u\,\gamma_a\,dy.
\]
Since \(u\equiv c\), this gives
\[
|c|^{q-2}c \int_\Omega \gamma_a\,dy=0.
\]
By the assumption \(\int_\Omega \gamma_a\,dy>0\), we conclude that \(c=0\). Hence
\(u\equiv0\).

But this contradicts the strong convergence in \(L^p(\Omega)\), because
\[
1=\|u_k\|_{L^p(\Omega)}\to \|u\|_{L^p(\Omega)}=0.
\]
The contradiction proves the proposition.
\end{proof}

The variational characterization of the first nontrivial Neumann level considered
here fits into a broader nonlinear spectral literature. For the classical
\(p\)-Laplacian with homogeneous Neumann boundary condition, see for example
\cite{Huang1990}. A broader treatment of nonlinear eigenvalue problems
for the \(p\)-Laplacian under several types of boundary conditions was given by
L\^e~\cite{Le2006}. More recent results for \((p,q)\)-type Neumann
problems in singular or irregular geometries, closer to the present framework,
can be found for instance in \cite{GarainPchelintsevUkhlov2024, MenovschikovUkhlov2026}.
The next proposition records the corresponding existence statement for the
weighted problem \eqref{eq:weak-neumann-pq} on admissible domains.

\begin{proposition}[First nontrivial weighted Neumann eigenvalue]\label{prop:first-neumann} 
Let \(1<p<n\), \(1<q<\frac{pn}{n-p}\), and $\Omega \subset \R^n$ be admissible.
Then \( \Lambda^N_{p,q}(\gamma_a)>0\), the infimum in \eqref{eq:def-LambdaN} is attained, and every minimizer \(u_a\in V^N_{p,q}(\gamma_a)\) yields a weak eigenpair \( \bigl(\Lambda^N_{p,q}(\gamma_a),u_a\bigr) \) of \eqref{eq:weak-neumann-pq}. In particular, 
\[ 
\Lambda^N_{p,q}(\gamma_a)=\inf\{\lambda>0: \lambda\,\, \text{ is an eigenvalue of \eqref{eq:weak-neumann-pq}}\}. 
\] 
\end{proposition}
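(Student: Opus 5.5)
We argue by the direct method of the calculus of variations on the reduced class \(V^N_{p,q}(\gamma_a)\), using the compact embedding \(W^{1,p}(\Omega)\hookrightarrow L^q(\Omega,\gamma_a)\) (Theorem~\ref{thm:bulk-jacobian} combined with Lemma~\ref{lem:bounded-weight-embedding}, for fixed \(a\)), the compact embedding into \(L^p(\Omega)\) (Corollary~\ref{cor:compact-Lp}), and the Friedrichs inequality of Proposition~\ref{prop:friedrichs-neumann}.

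\textbf{Positivity.} For \(u\in V^N_{p,q}(\gamma_a)\) the minimizing constant in \(\inf_c\|u-c\|_{L^q(\Omega,\gamma_a)}\) is \(c=0\), as observed after \eqref{eq:def-LambdaN}. Hence the weighted Poincar\'e inequality of Theorem~\ref{thm:bulk-jacobian}, with the bounded factor \(\widetilde\alpha\rho_a\circ\varphi^{-1}\) absorbed by Lemma~\ref{lem:bounded-weight-embedding}, gives
\[
\|u\|_{L^q(\Omega,\gamma_a)}=\inf_c\|u-c\|_{L^q(\Omega,\gamma_a)}\le C\|\nabla u\|_{L^p(\Omega)} .
\]
Therefore \(\Lambda^N_{p,q}(\gamma_a)\ge C^{-p}>0\). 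We should also record that \(V^N_{p,q}(\gamma_a)\) contains functions with \(\|u\|_{L^q(\Omega,\gamma_a)}>0\), so that the infimum is finite. For a nonconstant \(w\), the function \(w-c_w\), with \(c_w\) the minimizing constant, belongs to the class. If the weighted norm were zero for all of these, \(\gamma_a\) would vanish a.e., contradicting \(A_\alpha>0\).

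\textbf{Attainment.} Take a minimizing sequence normalized by \(\|u_k\|_{L^q(\Omega,\gamma_a)}=1\), so that \(\|\nabla u_k\|_p^p\to\Lambda\).
\begin{itemize}
\item Proposition~\ref{prop:friedrichs-neumann} bounds \(\|u_k\|_{L^p}\), so \((u_k)\) is bounded in \(W^{1,p}(\Omega)\), which is reflexive.
\item Extract a subsequence with \(u_k\rightharpoonup u\) weakly in \(W^{1,p}\) and strongly in \(L^q(\Omega,\gamma_a)\).
\item Continuity of the Nemytskii map \(t\mapsto|t|^{q-2}t\) from \(L^q(\gamma_a)\) to \(L^{q'}(\gamma_a)\) preserves the orthogonality constraint in the limit.
\item Strong convergence gives \(\|u\|_{L^q(\gamma_a)}=1\), so \(u\neq0\) and \(u\in V^N_{p,q}(\gamma_a)\).
\item Weak lower semicontinuity of \(\|\nabla\cdot\|_p^p\) shows that \(u\) is a minimizer.
\end{itemize}

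\textbf{Euler--Lagrange equation.} This is the main obstacle, because the constraint set is not a linear space, so arbitrary variations \(u+tv\) leave it. We handle it by removing the constraint. Define \(c(w)\) as the unique zero of \(c\mapsto\int_\Omega|w-c|^{q-2}(w-c)\gamma_a\), and consider the unconstrained Rayleigh functional
\[
R(w)=\frac{\|\nabla w\|_p^p}{\inf_c\|w-c\|_{L^q(\gamma_a)}^p},
\]
defined on nonconstant \(w\). By the characterization above, \(R(w)=R(w-c(w))\), so \(\inf R=\Lambda\) and it is attained at \(u\), where \(c(u)=0\).

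Next we differentiate \(g(t)=\inf_c\|u+tv-c\|_q^q\) at \(t=0\). Since \(F(t,c)=\|u+tv-c\|_q^q\) is \(C^1\) for \(q>1\), strictly convex in \(c\), and has its minimizer \(c(t)\to0\), an envelope (Danskin-type) argument gives
\[
g'(0)=\partial_tF(0,0)=q\int_\Omega|u|^{q-2}u\,v\,\gamma_a .
\]
Here we use that \(\partial_cF(0,0)=0\) and bracket \(g(t)\) between \(F(t,c(t))\) and \(F(t,0)\). Setting \(\frac{d}{dt}R(u+tv)|_{t=0}=0\) and using the normalization then yields \eqref{eq:weak-neumann-pq} with \(\lambda=\Lambda^N_{p,q}(\gamma_a)\) for every \(v\in W^{1,p}(\Omega)\).

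\textbf{Minimality among eigenvalues.} Let \((\lambda,w)\) be an eigenpair.
\begin{itemize}
\item Testing \eqref{eq:weak-neumann-pq} with \(v\equiv1\) gives \(\int|w|^{q-2}w\gamma_a=0\) whenever \(\lambda\neq0\), so \(w\in V^N_{p,q}(\gamma_a)\).
\item Testing with \(v=w\) gives \(\|\nabla w\|_p^p=\lambda\|w\|_{L^q(\gamma_a)}^p\).
\item Hence \(\lambda\ge\Lambda^N_{p,q}(\gamma_a)\) whenever \(\lambda>0\).
\end{itemize}
Together with the fact that the minimizer is an eigenfunction for \(\Lambda^N_{p,q}(\gamma_a)\), this shows the infimum over positive eigenvalues equals \(\Lambda^N_{p,q}(\gamma_a)\).
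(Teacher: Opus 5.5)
Your proposal is correct. Positivity, attainment, and minimality among positive eigenvalues follow the paper's proof essentially step by step; you also check that the infimum is finite, which the paper leaves implicit.

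The genuine difference is the Euler--Lagrange step. The paper views $u_a$ as a minimizer of $\int_\Omega|\nabla u|^p\,dy$ on $\{\int_\Omega|u|^q\gamma_a\,dy=1,\ \int_\Omega|u|^{q-2}u\,\gamma_a\,dy=0\}$. It invokes a multiplier rule with two multipliers $\lambda,\mu$ and kills $\mu$ by testing with $v\equiv1$. You instead eliminate the orthogonality constraint by passing to the quotient-seminorm Rayleigh functional $R$, whose infimum is $\Lambda^N_{p,q}(\gamma_a)$ exactly as in Proposition~\ref{prop:sharp-constants}. You then differentiate $g(t)=\inf_c\|u+tv-c\|^q_{L^q(\Omega,\gamma_a)}$ by an envelope argument. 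The optimality of $c=0$ for $u$ is precisely why no second multiplier appears.

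The paper's route is shorter, but its multiplier rule needs $u\mapsto\int_\Omega|u|^{q-2}u\,\gamma_a\,dy$ to be $C^1$, and this fails for $1<q<2$. In that range the formal derivative $(q-1)\int_\Omega|u|^{q-2}v\,\gamma_a\,dy$ can diverge. So can $\int_\Omega|u_a|^{q-2}\gamma_a\,dy$, which is used to conclude $\mu=0$. Your route only uses that $c\mapsto\int_\Omega|w-c|^q\gamma_a\,dy$ is $C^1$ and strictly convex with a continuously varying minimizer. That holds for all $q>1$, so your argument covers the full stated range rigorously.

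Two presentational points:
\begin{enumerate}
\item The bracketing should read $F(t,c(t))-F(0,c(t))\le g(t)-g(0)\le F(t,0)-F(0,0)$. Combined with the mean value theorem in $t$, continuity of $\partial_tF$, and $c(t)\to0$, this gives $g'(0)=\partial_tF(0,0)$. A Taylor expansion using only $\partial_cF(0,0)=0$ would additionally require $c(t)=O(t)$.
\item Since \eqref{eq:weak-neumann-pq} is $(p-1)$-homogeneous in $u$, your argument applies to every minimizer after normalization, as the statement requires.
\end{enumerate}
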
 
\begin{proof}
By the weighted \((p,q)\)-Poincar\'e inequality on \((\Omega,\gamma_a)\), for every
\(u\in V^N_{p,q}(\gamma_a)\), due to the orthogonality condition one has
\[
\|u\|_{L^q(\Omega,\gamma_a)}
=
\inf_{c\in\mathbb R}\|u-c\|_{L^q(\Omega,\gamma_a)}
\le C\,\|\nabla u\|_{L^p(\Omega)},
\]
Therefore
\[
\Lambda^N_{p,q}(\gamma_a)
=
\inf_{u\in V^N_{p,q}(\gamma_a)}
\frac{\displaystyle\int_\Omega |\nabla u|^p\,dy}
{\displaystyle\left(\int_\Omega |u|^q\gamma_a\,dy\right)^{p/q}}
\ge C^{-p}>0.
\]

Let \(\{u_k\}\subset V^N_{p,q}(\gamma_a)\) be a minimizing sequence, normalized by
\[
\int_\Omega |u_k|^q\gamma_a\,dy=1.
\]
Then
\[
\int_\Omega |\nabla u_k|^p\,dy\to \Lambda^N_{p,q}(\gamma_a),
\]
so in particular \(\{\nabla u_k\}\) is bounded in \(L^p(\Omega)\). By the Friedrichs
inequality on the reduced Neumann class, there exists \(C_a>0\) such that
\[
\|u_k\|_{L^p(\Omega)}\le C_a\|\nabla u_k\|_{L^p(\Omega)}.
\]
Hence \(\{u_k\}\) is bounded in \(W^{1,p}(\Omega)\).

Passing to a subsequence, we may assume that \(u_k\) converges weakly to \(u_a\) in \(W^{1,p}(\Omega)\).
Since the embeddings
\[
W^{1,p}(\Omega)\hookrightarrow L^p(\Omega)
\qquad\text{and}\qquad
W^{1,p}(\Omega)\hookrightarrow L^q(\Omega,\gamma_a)
\]
are compact, after passing to a further subsequence if necessary we also have
\[
u_k\to u_a \quad\text{strongly in }L^p(\Omega) \quad \text{and}\quad u_k\to u_a \quad\text{strongly in }L^q(\Omega,\gamma_a).
\]
In particular,
\[
\int_\Omega |u_a|^q\gamma_a\,dy=1.
\]
As in the previous proposition, we may pass to the limit in the orthogonality condition:
\[
0=\lim_{k\to\infty}\int_\Omega |u_k|^{q-2}u_k\,\gamma_a\,dy
=
\int_\Omega |u_a|^{q-2}u_a\,\gamma_a\,dy.
\]
Hence \(u_a\in V^N_{p,q}(\gamma_a)\).

By the weak lower semicontinuity of the \(L^p\)-norm of the gradient,
\[
\int_\Omega |\nabla u_a|^p\,dy
\le
\liminf_{k\to\infty}\int_\Omega |\nabla u_k|^p\,dy
=
\Lambda^N_{p,q}(\gamma_a).
\]
Since \(u_a\) is admissible and normalized, the reverse inequality is automatic:
\[
\Lambda^N_{p,q}(\gamma_a)\le \int_\Omega |\nabla u_a|^p\,dy.
\]
Therefore
\[
\int_\Omega |\nabla u_a|^p\,dy=\Lambda^N_{p,q}(\gamma_a),
\]
so \(u_a\) attains the infimum in \eqref{eq:def-LambdaN}.

It remains to show that \(u_a\) yields a weak eigenpair of
\eqref{eq:weak-neumann-pq}. The minimizer \(u_a\) minimizes \(\int_\Omega |\nabla u|^p\,dy\) on the constraint set
\[
 \{u\in W^{1,p}(\Omega): \int_\Omega |u|^q\gamma_a\,dy=1,\ \int_\Omega |u|^{q-2}u\,\gamma_a\,dy=0\}.
\]
A standard constrained-variation argument yields
\[
\int_\Omega |\nabla u_a|^{p-2}\nabla u_a\cdot \nabla v\,dy
=
\lambda \int_\Omega |u_a|^{q-2}u_a\,v\,\gamma_a\,dy
+
\mu \int_\Omega |u_a|^{q-2}v\,\gamma_a\,dy
\]
for all \(v\in W^{1,p}(\Omega)\), with suitable constants \(\lambda,\mu\in\mathbb R\).

Choosing \(v\equiv 1\), we obtain
\[
0
=
\lambda \int_\Omega |u_a|^{q-2}u_a\,\gamma_a\,dy
+
\mu \int_\Omega |u_a|^{q-2}\gamma_a\,dy.
\]
The first integral vanishes because \(u_a\in V^N_{p,q}(\gamma_a)\), while the second
one is strictly positive. Hence \(\mu=0\). Therefore
\[
\int_\Omega |\nabla u_a|^{p-2}\nabla u_a\cdot \nabla v\,dy
=
\lambda \int_\Omega |u_a|^{q-2}u_a\,v\,\gamma_a\,dy
\qquad\text{for all }v\in W^{1,p}(\Omega).
\]
Finally, taking \(v=u_a\) and using the normalization, we obtain
\[
\lambda
=
\int_\Omega |\nabla u_a|^p\,dy
=
\Lambda^N_{p,q}(\gamma_a).
\]
Thus \(\bigl(\Lambda^N_{p,q}(\gamma_a),u_a\bigr)\) is a weak eigenpair of
\eqref{eq:weak-neumann-pq}.

Conversely, let \((\lambda,u)\) be a weak eigenpair of \eqref{eq:weak-neumann-pq}
with \(\lambda>0\). Taking \(v\equiv 1\) in \eqref{eq:weak-neumann-pq}, we get
\[
\int_\Omega |u|^{q-2}u\,\gamma_a\,dy=0,
\]
so \(u\in V^N_{p,q}(\gamma_a)\). Taking \(v=u\) in
\eqref{eq:weak-neumann-pq}, we obtain
\[
\int_\Omega |\nabla u|^p\,dy
=
\lambda \left(\int_\Omega |u|^q\gamma_a\,dy\right)^{p/q},
\]
and therefore
\[
\lambda
=
\frac{\displaystyle\int_\Omega |\nabla u|^p\,dy}
{\displaystyle\left(\int_\Omega |u|^q\gamma_a\,dy\right)^{p/q}}
\ge
\Lambda^N_{p,q}(\gamma_a).
\]
Since the opposite inequality is already realized by the minimizer \(u_a\), we conclude
that
\[
\Lambda^N_{p,q}(\gamma_a)
=
\inf\{\lambda>0:\ \lambda \text{ is an eigenvalue of \eqref{eq:weak-neumann-pq}}\}.
\]
This completes the proof.
\end{proof}

\subsection{The first non-trivial Steklov eigenvalue}

A pair \((\lambda,u)\in \mathbb R\times W^{1,p}(\Omega)\), \(Tu\not\equiv 0\), is called
a weak eigenpair of \eqref{eq:strong-steklov-pq} if
\begin{equation}\label{eq:weak-steklov-pq}
\int_\Omega |\nabla u|^{p-2}\nabla u\cdot \nabla v\,dy
=
\lambda\,\|Tu\|_{L^q(\partial\Omega,\beta)}^{\,p-q}
\int_{\partial\Omega}|Tu|^{q-2}Tu\,Tv\,\beta\,dS
\end{equation}
for every \(v\in W^{1,p}(\Omega)\).

Similarly to Neumann case, define
\[
V^{St}_{p,q}(\beta)
:=
\left\{
u\in W^{1,p}(\Omega):
Tu\not\equiv 0,\ 
\int_{\partial\Omega}|Tu|^{q-2}Tu\,\beta\,dS=0
\right\},
\]
and
\begin{equation}\label{eq:def-LambdaSt}
\Lambda^{St}_{p,q}(\beta)
:=
\inf\left\{
\frac{\displaystyle\int_\Omega |\nabla u|^p\,dy}
{\left(\displaystyle\int_{\partial\Omega}|Tu|^q\beta\,dS\right)^{p/q}}
:\,
u\in V^{St}_{p,q}(\beta)
\right\}.
\end{equation}

\begin{proposition}[Friedrichs inequality on the reduced Steklov class]
\label{prop:friedrichs-steklov}
Let \(1<p<n\), \(1<q<\frac{p(n-1)}{n-p}\), and \(\Omega\subset\mathbb R^n\) be admissible.

Then there exists a constant \(C>0\) such that
\[
\|u\|_{L^p(\Omega)}\le C \|\nabla u\|_{L^p(\Omega)}
\]
for every \(u\in W^{1,p}(\Omega)\) satisfying
\[
\int_{\partial\Omega} |Tu|^{q-2}Tu\,\beta\,dS=0.
\]
\end{proposition}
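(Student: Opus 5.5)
I will argue by contradiction, in the same way as for Proposition~\ref{prop:friedrichs-neumann}, with the bulk weighted orthogonality replaced by the boundary one. Suppose the inequality fails. Then there is a sequence \(\{u_k\}\subset W^{1,p}(\Omega)\) with
\[
\int_{\partial\Omega}|Tu_k|^{q-2}Tu_k\,\beta\,dS=0,\qquad \|u_k\|_{L^p(\Omega)}>k\|\nabla u_k\|_{L^p(\Omega)}.
\]
The constraint is invariant under multiplication by positive scalars, since it is \((q-1)\)-homogeneous. So we may normalize to \(\|u_k\|_{L^p(\Omega)}=1\) and \(\|\nabla u_k\|_{L^p(\Omega)}\to0\). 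The sequence is then bounded in \(W^{1,p}(\Omega)\).

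By Corollary~\ref{cor:compact-Lp}, which uses (A2), after passing to a subsequence \(u_k\to u\) strongly in \(L^p(\Omega)\). Since the gradients tend to zero, \(u_k\to u\) in \(W^{1,p}(\Omega)\) with \(\nabla u=0\). Because \(\Omega\) is connected, \(u\equiv c\) is constant.

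Next I pass to the boundary. By Theorem~\ref{thm:induced-trace-operator}, \(T\) is bounded, indeed compact, from \(W^{1,p}(\Omega)\) into \(L^q(\partial\Omega,J^\partial_{\varphi^{-1}})\). Since \(\beta=(\alpha\circ\varphi_\partial^{-1})J^\partial_{\varphi^{-1}}\) with \(\alpha\) bounded, Lemma~\ref{lem:bounded-weight-embedding} shows that \(T\) is also bounded into \(L^q(\partial\Omega,\beta)\). As \(u_k\to c\) in \(W^{1,p}(\Omega)\), we get \(Tu_k\to Tc\) in \(L^q(\partial\Omega,\beta)\).

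I must check that \(Tc=c\). By the defining formula, \(Tc=(T_B(c\circ\varphi))\circ\varphi_\partial^{-1}\). The function \(c\circ\varphi\) is the constant \(c\) on \(B\), its ball trace is \(c\), and so \(Tc=c\). By continuity of the Nemytskii map \(t\mapsto|t|^{q-2}t\) from \(L^q(\partial\Omega,\beta)\) to \(L^{q'}(\partial\Omega,\beta)\), and because \(\beta\) has finite mass, the constraint passes to the limit:
\[
0=|c|^{q-2}c\int_{\partial\Omega}\beta\,dS=|c|^{q-2}c\,A_\alpha .
\]
Since \(A_\alpha>0\), this forces \(c=0\). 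Then \(1=\|u_k\|_{L^p(\Omega)}\to\|u\|_{L^p(\Omega)}=0\), a contradiction.

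The only delicate points are the identification \(Tc=c\) and the fact that strong \(W^{1,p}\)-convergence transfers to the boundary in the weighted \(\beta\)-norm. Both follow directly from the construction of the induced trace operator and from the boundedness of \(\alpha\), so I expect no real obstacle.
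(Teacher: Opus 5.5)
Your proposal is correct and follows the paper's argument. The paper's proof simply refers back to the contradiction proof of Proposition~\ref{prop:friedrichs-neumann}, with the weighted bulk embedding replaced by the trace operator \(T:W^{1,p}(\Omega)\to L^q(\partial\Omega,\beta)\), and that is exactly what you carry out; your remarks that only boundedness of \(T\) is needed (since \(u_k\to c\) strongly in \(W^{1,p}(\Omega)\)) and your explicit check that \(Tc=c\) make explicit details the paper leaves implicit.
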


\begin{proof}
The proof is completely analogous to Proposition~\ref{prop:friedrichs-neumann},
with the compact embedding
\[
W^{1,p}(\Omega)\hookrightarrow L^q(\Omega,\gamma_a)
\]
replaced by the compact trace operator
\[
T:W^{1,p}(\Omega)\to L^q(\partial\Omega,\beta).
\]
\end{proof}

Related nonlinear Steklov-type eigenvalue problems have also been studied from
the viewpoint of trace embeddings and variational methods; see for example
\cite{BonderRossi2002, PaganiPierotti2010, GarainGoldshteinUkhlov2025, LambertiUkhlov2026}. We now record the analogous result on the first nontrivial weighted Steklov eigenvalue introduced above.

\begin{proposition}[First nontrivial weighted Steklov eigenvalue]\label{prop:first-steklov} 
Let \(1<p<n\), \(1<q<\frac{p(n-1)}{n-p}\), and \(\Omega\subset\mathbb R^n\) be admissible.
Then \( \Lambda^{St}_{p,q}(\beta)>0, \) the infimum in \eqref{eq:def-LambdaSt} is attained, and every minimizer \(u\in V^{St}_{p,q}(\beta)\) yields a weak eigenpair \( \bigl(\Lambda^{St}_{p,q}(\beta),u\bigr) \) of \eqref{eq:weak-steklov-pq}. In particular, 
\[ 
\Lambda^{St}_{p,q}(\beta)=\inf\{\lambda>0: \lambda\,\, \text{ is an eigenvalue of \eqref{eq:weak-steklov-pq}}\}. 
\] 
\end{proposition}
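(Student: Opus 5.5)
The plan is to copy the proof of Proposition~\ref{prop:first-neumann}, replacing the bulk embedding into \(L^q(\Omega,\gamma_a)\) by the compact trace operator \(T:W^{1,p}(\Omega)\to L^q(\partial\Omega,\beta)\). This operator comes from Theorem~\ref{thm:induced-trace-operator} together with Lemma~\ref{lem:bounded-weight-embedding}, since \(\beta=(\alpha\circ\varphi_\partial^{-1})J^\partial_{\varphi^{-1}}\) and \(\alpha\in L^\infty\).

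\textbf{Positivity.} First I prove a weighted trace Poincar\'e inequality:
\[
\inf_{c\in\mathbb R}\|Tu-c\|_{L^q(\partial\Omega,\beta)}\le C\|\nabla u\|_{L^p(\Omega)} .
\]
I would argue by contradiction, as in Proposition~\ref{prop:friedrichs-neumann}. Take a sequence with \(\inf_c\|Tu_k-c\|=1\) and \(\|\nabla u_k\|_{L^p}\to0\). Subtracting the optimal constant \(c_k\) (note \(T1=1\)) gives \(u_k\) in the reduced class with \(\|Tu_k\|_{L^q(\beta)}=1\). Proposition~\ref{prop:friedrichs-steklov} then bounds \(u_k\) in \(W^{1,p}\). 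Using compactness of \(T\) and of \(W^{1,p}\hookrightarrow L^p\) (Corollary~\ref{cor:compact-Lp}), a subsequence converges to a constant \(c\). Continuity of the Nemytskii map \(t\mapsto|t|^{q-2}t\) from \(L^q(\beta)\) to \(L^{q'}(\beta)\) passes the orthogonality to the limit. This gives \(|c|^{q-2}c\,A_\alpha=0\), so \(c=0\), contradicting the normalization. For \(u\in V^{St}_{p,q}(\beta)\) the optimal constant is \(0\), by the same strict convexity remark as in the Neumann case. Hence \(\Lambda^{St}_{p,q}(\beta)\ge C^{-p}>0\).

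\textbf{Existence of a minimizer.} Take a minimizing sequence normalized by \(\int_{\partial\Omega}|Tu_k|^q\beta\,dS=1\). The gradients are bounded, so Proposition~\ref{prop:friedrichs-steklov} bounds \(u_k\) in \(W^{1,p}(\Omega)\), and a subsequence converges weakly to some \(u\). Because \(T\) is compact, \(Tu_k\to Tu\) strongly in \(L^q(\partial\Omega,\beta)\). This strong convergence is exactly what is needed: it preserves the normalization (so \(Tu\not\equiv0\)) and the orthogonality constraint. Weak lower semicontinuity of \(\|\nabla\cdot\|_{L^p}^p\) then shows that \(u\) attains the infimum.

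\textbf{Euler--Lagrange equation.} The constraint functionals \(G(u)=\int|Tu|^q\beta\) and \(H(u)=\int|Tu|^{q-2}Tu\,\beta\) are \(C^1\) on \(W^{1,p}\) since \(T\) is bounded and linear. For \(H\) with \(1<q<2\) this needs care: I would instead use the equivalent unconstrained formulation. There \(u\) minimizes
\[
\frac{\|\nabla u\|_p^p}{\inf_c\|Tu-c\|_{L^q(\beta)}^p}
\]
over all \(u\) with \(Tu\) nonconstant. I would differentiate this using the envelope property at the optimal \(c=0\), which is unique, so the derivative of the infimum is the derivative at \(c=0\). This gives
\[
\int_\Omega|\nabla u|^{p-2}\nabla u\cdot\nabla v
=\lambda\int_{\partial\Omega}|Tu|^{q-2}Tu\,Tv\,\beta
\]
for all \(v\), where \(\lambda=\Lambda^{St}_{p,q}(\beta)\) after testing with \(v=u\). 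Alternatively, one can use Lagrange multipliers \(\lambda,\mu\) and kill \(\mu\) with \(v\equiv1\), exactly as in Proposition~\ref{prop:first-neumann}; there the second integral \(\int|Tu|^{q-2}\beta\) must be justified as positive and finite, which again suggests the envelope route.

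\textbf{Identification with the least positive eigenvalue.} Conversely, testing \eqref{eq:weak-steklov-pq} with \(v\equiv1\) puts any eigenfunction \(u\) with \(\lambda>0\) in \(V^{St}_{p,q}(\beta)\). Testing with \(v=u\) shows \(\lambda\) equals its Rayleigh quotient, so \(\lambda\ge\Lambda^{St}_{p,q}(\beta)\).

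The main obstacle is the rigorous derivation of the Euler--Lagrange equation with the nonsmooth constraint when \(q<2\).
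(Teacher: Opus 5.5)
Your proposal is correct. For positivity, existence and the identification with the least positive eigenvalue it follows the paper, whose proof simply reruns Proposition~\ref{prop:first-neumann} with the compact trace operator and Proposition~\ref{prop:friedrichs-steklov}.

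Your contradiction argument for the trace Poincar\'e inequality is valid but longer than needed. For \(u\) in the reduced class, Proposition~\ref{prop:friedrichs-steklov} and boundedness of \(T\) already give \(\|Tu\|_{L^q(\partial\Omega,\beta)}\le \|T\|(1+C)\|\nabla u\|_{L^p(\Omega)}\).

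The genuine difference is the Euler--Lagrange step. The paper imposes Lagrange multipliers for both constraints and kills \(\mu\) by testing with \(v\equiv1\). This tacitly requires \(H\) to be \(C^1\) and \(0<\int_{\partial\Omega}|Tu|^{q-2}\beta\,dS<\infty\). That holds for \(q\ge2\), but, as you observe, it is not justified for \(1<q<2\).

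Your envelope route drops the second constraint. It needs only two facts:
\begin{enumerate}
\item the map \(t\mapsto|t|^q\) is \(C^1\);
\item the optimal constant \(c(g)\) is unique and depends continuously on \(g\in L^q(\partial\Omega,\beta)\). Continuity follows from uniqueness together with the bound \(|c(g)|\,(\int_{\partial\Omega}\beta\,dS)^{1/q}\le 2\|g\|_{L^q(\partial\Omega,\beta)}\).
\end{enumerate}
Set \(g=Tu\), \(h=Tv\) and \(\Phi(t)=\min_c\int_{\partial\Omega}|g+th-c|^q\beta\,dS\), with minimizers \(c_t\to c_0=0\). Convexity of \(|\cdot|^q\) gives
\[
\Phi(0)+qt\int_{\partial\Omega}|g-c_t|^{q-2}(g-c_t)\,h\,\beta\,dS\le\Phi(t)\le\int_{\partial\Omega}|g+th|^q\beta\,dS
\]
for all small \(t\) of either sign. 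Hence \(\Phi'(0)=q\int_{\partial\Omega}|g|^{q-2}g\,h\,\beta\,dS\). Stationarity of the quotient at \(t=0\) then yields the weak equation with multiplier \(\Lambda^{St}_{p,q}(\beta)\) directly.

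So your version is rigorous on the whole range \(1<q<\frac{p(n-1)}{n-p}\), whereas the paper's Lagrange-multiplier shortcut is justified only for \(q\ge2\).
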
 

\begin{proof}
The proof is completely analogous to Proposition~\ref{prop:first-neumann}, with the compact embedding $W^{1,p}(\Omega) \hookrightarrow L^q(\Omega, \gamma_a)$ replaced by the compact weighted trace operator $T: W^{1,p}(\Omega) \hookrightarrow L^q(\partial\Omega, \beta)$, and with Proposition~\ref{prop:friedrichs-neumann} replaced by Proposition~\ref{prop:friedrichs-steklov}.
\end{proof}

\subsection{Equivalent formulation in terms of sharp inequalities}
It is convenient to rewrite the variational quantities
\(\Lambda^N_{p,q}(\gamma_a)\) and \(\Lambda^{St}_{p,q}(\beta)\) in terms of the best
constants in the corresponding weighted Poincar\'e and trace inequalities.

For \(a\in(0,1]\), let \(C^N_{p,q}(\gamma_a)\) denote the best constant in the
weighted \((p,q)\)-Poincar\'e inequality
\begin{equation}\label{eq:best-poincare-constant}
\inf_{c\in\mathbb R}\|u-c\|_{L^q(\Omega,\gamma_a)}
\le
C^N_{p,q}(\gamma_a)\,\|\nabla u\|_{L^p(\Omega)},
\qquad u\in W^{1,p}(\Omega).
\end{equation}

Similarly, let \(C^{St}_{p,q}(\beta)\) denote the best constant in the weighted trace
inequality
\begin{equation}\label{eq:best-trace-constant}
\inf_{c\in\mathbb R}\|Tu-c\|_{L^q(\partial\Omega,\beta)}
\le
C^{St}_{p,q}(\beta)\,\|\nabla u\|_{L^p(\Omega)},
\qquad u\in W^{1,p}(\Omega).
\end{equation}

\begin{proposition}[Sharp constants and first nontrivial eigenvalues]
\label{prop:sharp-constants}
Let \(1<p<n\), \(1<q<\frac{p(n-1)}{n-p}\), and \(\Omega\subset\mathbb R^n\) be admissible. Then
\[
C^N_{p,q}(\gamma_a)=\Lambda^N_{p,q}(\gamma_a)^{-1/p},
\qquad
C^{St}_{p,q}(\beta)=\Lambda^{St}_{p,q}(\beta)^{-1/p}.
\]
\end{proposition}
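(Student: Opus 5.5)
The plan is to prove the Neumann identity; the Steklov one is identical after replacing \(L^q(\Omega,\gamma_a)\) by \(L^q(\partial\Omega,\beta)\) and \(u\) by \(Tu\) in the denominators. The key observation is the one already used after the definition of \(V^N_{p,q}(\gamma_a)\). For fixed \(u\in W^{1,p}(\Omega)\), the function
\[
F_u(c)=\int_\Omega|u-c|^q\gamma_a\,dy
\]
is strictly convex and coercive in \(c\), since \(q>1\) and \(\int_\Omega\gamma_a\,dy=A_\alpha>0\). It is finite by Theorem~\ref{thm:bulk-jacobian} combined with Lemma~\ref{lem:bounded-weight-embedding}. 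Hence \(F_u\) has a unique minimizer \(c_u\), characterized by
\[
\int_\Omega|u-c_u|^{q-2}(u-c_u)\gamma_a\,dy=0 .
\]

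First I will show that \(C^N_{p,q}(\gamma_a)\le\Lambda^N_{p,q}(\gamma_a)^{-1/p}\). Take any \(u\in W^{1,p}(\Omega)\). If \(u-c_u\equiv0\), meaning \(u\) is constant \(\gamma_a\)-a.e., the left side of \eqref{eq:best-poincare-constant} vanishes and there is nothing to prove. Otherwise \(w:=u-c_u\in V^N_{p,q}(\gamma_a)\) and \(\nabla w=\nabla u\). By the definition \eqref{eq:def-LambdaN},
\[
\inf_c\|u-c\|_{L^q(\Omega,\gamma_a)}=\|w\|_{L^q(\Omega,\gamma_a)}\le\Lambda^N_{p,q}(\gamma_a)^{-1/p}\|\nabla u\|_{L^p(\Omega)} .
\]
This uses \(\Lambda^N_{p,q}(\gamma_a)>0\) from Proposition~\ref{prop:first-neumann}.

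One subtlety needs a remark here. An element \(w\in V^N_{p,q}(\gamma_a)\) is nonzero in \(W^{1,p}\), but it might vanish \(\gamma_a\)-a.e. I will argue that this cannot happen for \(w=u-c_u\) in the non-constant case, since we chose that case precisely so that \(w\not\equiv0\) \(\gamma_a\)-a.e. For the quotient to make sense in general, I will also note that \(\gamma_a>0\) a.e. This holds wherever \(\widetilde\alpha\circ\varphi^{-1}>0\), and otherwise the quotient is read as \(+\infty\). The inequality direction used here is unaffected by this issue.

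Next I will show the reverse bound \(C^N_{p,q}(\gamma_a)\ge\Lambda^N_{p,q}(\gamma_a)^{-1/p}\). I will use the minimizer \(u_a\) from Proposition~\ref{prop:first-neumann}, normalized by \(\|u_a\|_{L^q(\Omega,\gamma_a)}=1\). Since \(u_a\in V^N_{p,q}(\gamma_a)\), uniqueness of the minimizing constant gives \(c_{u_a}=0\). Therefore
\[
\inf_c\|u_a-c\|_{L^q(\Omega,\gamma_a)}=1 .
\]
Substituting \(u_a\) into \eqref{eq:best-poincare-constant} gives
\[
1\le C^N_{p,q}(\gamma_a)\,\Lambda^N_{p,q}(\gamma_a)^{1/p},
\]
which is the reverse bound. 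Alternatively, I could use any element of \(V^N_{p,q}(\gamma_a)\) and take the infimum, which avoids needing attainment.

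For the Steklov case I will run the same argument with \(F_u(c)=\int_{\partial\Omega}|Tu-c|^q\beta\,dS\). Here \(\int_{\partial\Omega}\beta\,dS=A_\alpha>0\), finiteness comes from Theorem~\ref{thm:induced-trace-operator}, and I will use Proposition~\ref{prop:first-steklov}. One more point is needed: the infimum in \eqref{eq:best-trace-constant} is \(0\) when \(Tu\) is constant, and such \(u\) are excluded from \(V^{St}_{p,q}(\beta)\) by the condition \(Tu\not\equiv0\) after subtracting the constant. So they cause no trouble.

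I expect the main obstacle to be only the bookkeeping of degenerate cases, where the numerator or the denominator vanishes. The core step is the uniqueness and characterization of the minimizing constant \(c_u\) via strict convexity of \(F_u\).
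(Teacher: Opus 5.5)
Your proposal is correct and follows essentially the paper's route: the unique minimizer \(c_u\) of the strictly convex, coercive function \(F_u\) puts \(u-c_u\) in the reduced class \(V^N_{p,q}(\gamma_a)\) with the same gradient, which identifies \(\Lambda^N_{p,q}(\gamma_a)\) with \(C^N_{p,q}(\gamma_a)^{-p}\); the paper states this as a single identity of infima over nonconstant \(u\) rather than as two inequalities, and the Steklov case is the same argument with \(Tu\) and \(\beta\). Your extra bookkeeping of vanishing denominators is a harmless refinement, though the remark about \(\gamma_a>0\) a.e. is unnecessary and false in general where \(\alpha\) vanishes — a case your \(+\infty\) convention for the quotient already covers.
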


\begin{proof}
We first consider the Neumann side. For a fixed \(u\in W^{1,p}(\Omega)\), define
\[
F_u(c):=\int_\Omega |u-c|^q\,\gamma_a\,dy,
\qquad c\in\mathbb R.
\]
Since \(\gamma_a\ge 0\) and \(\int_\Omega \gamma_a\,dy>0\), the function \(F_u\) is
continuous, strictly convex, and satisfies \(F_u(c)\to\infty\) as \(|c|\to\infty\).
Hence there exists a unique minimizer \(c(u)\in\mathbb R\).
By differentiation with respect to \(c\), this minimizer is characterized by
\[
\int_\Omega |u-c(u)|^{q-2}(u-c(u))\,\gamma_a\,dy=0.
\]
Therefore \(u-c(u)\in V^N_{p,q}(\gamma_a)\), and
\[
\inf_{c\in\mathbb R}\|u-c\|_{L^q(\Omega,\gamma_a)}
=
\|u-c(u)\|_{L^q(\Omega,\gamma_a)}.
\]
Since \(\nabla(u-c(u))=\nabla u\), it follows that
\[
\Lambda^N_{p,q}(\gamma_a)
=
\inf_{\substack{u\in W^{1,p}(\Omega)\\ \nabla u\not\equiv 0}}
\frac{\|\nabla u\|_{L^p(\Omega)}^p}
{\left(\inf_{c\in\mathbb R}\|u-c\|_{L^q(\Omega,\gamma_a)}\right)^p}.
\]
This is equivalent to saying that the best constant in
\eqref{eq:best-poincare-constant} is exactly
\[
C^N_{p,q}(\gamma_a)=\Lambda^N_{p,q}(\gamma_a)^{-1/p}.
\]

The proof for the Steklov side is completely analogous.
\end{proof}

Consequently, the convergence result proved below for the first nontrivial Neumann
and Steklov eigenvalues may be equivalently reformulated as the convergence of the
corresponding sharp constants:
\[
\Lambda^N_{p,q}(\gamma_a)\to \Lambda^{St}_{p,q}(\beta)
\quad\Longleftrightarrow\quad
C^N_{p,q}(\gamma_a)\to C^{St}_{p,q}(\beta),
\qquad a \to 0.
\]
Indeed, by Propositions~\ref{prop:first-neumann} and~\ref{prop:first-steklov} both quantities are strictly positive, and the
function \(t\mapsto t^{-1/p}\) is continuous on \((0,\infty)\).

\section{Boundary concentration}

\subsection{Boundary concentration on the ball}

We begin with the reference case of the unit ball, where the boundary
concentration mechanism can be analyzed explicitly. The goal of this subsection is
to isolate the ingredients that will later be transferred to a general domain
\(\Omega\) by means of the map \(\varphi\): the boundary-layer estimate for
zero-trace functions, the approximation by the radial extension of the trace,
and the convergence of weighted bulk moments to the corresponding boundary
moments.

We start with auxiliary results on the Poisson extension.
Let \(\{\mathcal P_r\}_{0<r<1}\) be the Poisson semigroup on \(\partial B\), i.e.
\[
\mathcal P_r g(s)
:=
\int_{\partial B} P(rs,\eta)\,g(\eta)\,d\sigma,
\qquad s\in\partial B,
\]
where
\[
P(x,\eta)=\frac{1}{\sigma(\partial B)}\frac{1-|x|^2}{|x-\eta|^n}
\]
is the Poisson kernel of the unit ball; see, for example,
\cite[Chapter~1]{ABR01}.

For smooth \(g\in C^\infty(\partial B)\), define the \textit{Poisson extension operator}
$$
\mathcal{P}: C^\infty(\partial B) \to C^\infty(B)
$$
as
\[
\mathcal P[g](x):=\int_{\partial B}P(x,\eta)\,g(\eta)\,d\sigma(\eta),
\quad x\in B; \quad \mathcal P[g](rs):=\mathcal P_r g(s), \quad s\in\partial B.
\]

For \(g\in L^q(\partial B)\), define its radial extension by
\[
\mathcal R[g](rs):=g(s),
\qquad 0<r<1,\ s\in\partial B.
\]

First let us recall the standard fact that the Poisson extension operator is the right inverse to the trace operator.

\begin{lemma}[Poisson extension on the ball]\label{lem:poisson-extension-trace}
Let \(1<p<\infty\). For \(g\in C^\infty(\partial B)\), the function \(\mathcal P[g]\) is harmonic in \(B\), extends continuously to \(\overline B\),
and
\[
T(\mathcal P[g])=g.
\]
Moreover, \(\mathcal P\) extends uniquely to a bounded linear operator
\[
\mathcal P:W^{1-1/p,p}(\partial B)\to W^{1,p}(B)
\]
such that
\[
T(\mathcal P[g])=g
\qquad\text{for all }g\in W^{1-1/p,p}(\partial B).
\]
\end{lemma}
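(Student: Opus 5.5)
The plan has two parts: first handle smooth data classically, then pass to $W^{1-1/p,p}(\partial B)$ by density, using the trace theorem on the ball to identify the boundary values.

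\emph{Smooth data.} For $g\in C^\infty(\partial B)$, harmonicity of $\mathcal P[g]$ follows by differentiating under the integral sign. This is allowed because $x\mapsto P(x,\eta)$ is harmonic in $B$ for each fixed $\eta$, and it is smooth locally uniformly in $x$ on compact subsets of $B$. Continuity up to the boundary, with $\mathcal P[g]|_{\partial B}=g$, is the classical approximate-identity argument: $P(x,\cdot)\ge0$, $\int_{\partial B}P(x,\eta)\,d\sigma=1$, and the mass of $P(x,\cdot)$ concentrates at $\xi\in\partial B$ as $x\to\xi$ (see \cite[Chapter~1]{ABR01}). To conclude $T(\mathcal P[g])=g$ we also need $\mathcal P[g]\in W^{1,p}(B)$. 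For smooth $g$ I would argue as follows. Extend $g$ smoothly to a function $G\in C^\infty(\overline B)$. Then $\mathcal P[g]-G$ solves $\Delta w=-\Delta G$ in $B$ with $w=0$ on $\partial B$. Classical regularity for the Dirichlet problem on the ball gives $w\in C^\infty(\overline B)$, so $\mathcal P[g]\in C^\infty(\overline B)\subset W^{1,p}(B)$. Since $T$ agrees with restriction on $C(\overline B)\cap W^{1,p}(B)$, this gives $T(\mathcal P[g])=g$.

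\emph{The estimate, which is the main step.} The key point is the bound
\[
\|\mathcal P[g]\|_{W^{1,p}(B)}\le C\|g\|_{W^{1-1/p,p}(\partial B)},\qquad g\in C^\infty(\partial B).
\]
I would prove it in the following order.
\begin{enumerate}
\item By surjectivity of the trace operator and the open mapping theorem, choose $U\in W^{1,p}(B)$ with $TU=g$ and $\|U\|_{W^{1,p}(B)}\le C\|g\|_{W^{1-1/p,p}(\partial B)}$.
\item Let $h$ be the unique weak harmonic function with $h-U\in W^{1,p}_0(B)$. The $L^p$ solvability of the Dirichlet problem for the Laplacian on the smooth domain $B$ gives $\|h\|_{W^{1,p}(B)}\le C\|U\|_{W^{1,p}(B)}$. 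This comes from Calder\'on--Zygmund estimates for $\Delta w=\operatorname{div}F$ with $w\in W^{1,p}_0(B)$ and $F=-\nabla U\in L^p$. For $p=2$ it is just Lax--Milgram.
\item Show that $h=\mathcal P[g]$. When $U$ is smooth, both functions are harmonic, continuous on $\overline B$ and equal to $g$ on $\partial B$, so uniqueness for the classical Dirichlet problem gives $h=\mathcal P[g]$. For general $U$, both $h$ and $\mathcal P[g]$ are harmonic, lie in $W^{1,p}(B)$ and have the same trace, so their difference is a harmonic function in $W^{1,p}_0(B)$. It therefore vanishes, by uniqueness of the weak Dirichlet problem in $W^{1,p}_0$, which also follows from the $L^p$ theory.
\end{enumerate}
I expect the invocation of the $W^{1,p}$ Dirichlet theory for $p\ne2$ to be the main obstacle. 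If one prefers to avoid it, an alternative is a direct kernel estimate: bound $|\nabla\mathcal P[g](x)|$ by
\[
\int_{\partial B}|\nabla_xP(x,\eta)|\,|g(\eta)-g(x/|x|)|\,d\sigma(\eta),
\]
using that $\int_{\partial B}\nabla_xP(x,\eta)\,d\sigma(\eta)=0$, and then integrate in $x$ to recover the Gagliardo seminorm by a Hardy-type argument.

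\emph{Extension by density.} Since $C^\infty(\partial B)$ is dense in $W^{1-1/p,p}(\partial B)$, the bounded linear map $\mathcal P$ extends uniquely to a bounded operator $W^{1-1/p,p}(\partial B)\to W^{1,p}(B)$. Now take $g_k\to g$ with $g_k$ smooth. Then $\mathcal P[g_k]\to\mathcal P[g]$ in $W^{1,p}(B)$, and by continuity of $T$,
\[
T(\mathcal P[g])=\lim_k T(\mathcal P[g_k])=\lim_k g_k=g .
\]
Uniqueness of the extension is automatic from density.
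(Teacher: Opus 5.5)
Your proposal is correct and takes essentially the same route as the paper. Both arguments rest on the $W^{1,p}$ well-posedness of the Dirichlet problem for the Laplacian in $B$, identify the resulting harmonic function with $\mathcal P[g]$ by uniqueness, and conclude by density of $C^\infty(\partial B)$. The paper simply cites Miyazaki's bounded solution operator, whereas you derive it from a bounded trace lift plus the $L^p$ theory for $\Delta w=\operatorname{div}F$ in $W^{1,p}_0(B)$. Your extra check that $\mathcal P[g]\in C^\infty(\overline B)\subset W^{1,p}(B)$ for smooth $g$ makes explicit a point the paper leaves implicit.
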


\begin{proof}
For smooth boundary data, the result is the classical Poisson representation for
the Dirichlet problem in the unit ball; see \cite[Theorem~1.17]{ABR01}.

Next, by the trace theorem on the ball, the trace operator
\[
T:W^{1,p}(B)\to W^{1-1/p,p}(\partial B)
\]
is bounded and surjective. Moreover, \cite[Theorem~1.2]{Miyazaki2016} yields a bounded
solution operator for the Dirichlet problem in the ball: for every
\(g\in W^{1-1/p,p}(\partial B)\) there exists a unique harmonic function
\(u\in W^{1,p}(B)\) such that \(Tu=g\), and
\[
\|u\|_{W^{1,p}(B)}\le C\|g\|_{W^{1-1/p,p}(\partial B)}.
\]
Since \(C^\infty(\partial B)\) is dense in \(W^{1-1/p,p}(\partial B)\), this
solution operator extends uniquely from smooth data to all
\(g\in W^{1-1/p,p}(\partial B)\). By uniqueness of the Dirichlet solution, this
extension coincides with the Poisson extension on smooth data.
\end{proof}

The next estimate is the quantitative Poisson-to-radial comparison that will be
combined below with the boundary-layer estimate for zero-trace functions.

\begin{lemma}[Quantitative comparison of Poisson and radial extensions]
\label{lem:poisson-to-radial-in-mu}
Let \(1\le q<\infty\) and \(0<s<1\). Then there exists a constant
\[
C=C(n,q,s,\|\alpha\|_{L^\infty(\partial B)})>0
\]
such that
\[
\|\mathcal P[g]-\mathcal R[g]\|_{L^q(B,\mu_a)}
\le
C a^s\|g\|_{W^{s,q}(\partial B)}
\]
for every \(g\in W^{s,q}(\partial B)\) and every \(a\in(0,1]\).
\end{lemma}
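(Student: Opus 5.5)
Write the weighted norm in polar coordinates. Since $\mu_a(rs)=\alpha(s)\frac{n}{a}r^{n/a-n}$ and $dx=r^{n-1}dr\,d\sigma(s)$,
\[
\|\mathcal P[g]-\mathcal R[g]\|_{L^q(B,\mu_a)}^q
=\int_0^1 \frac{n}{a}r^{n/a-1}\int_{\partial B}\alpha(s)\,|\mathcal P_r g(s)-g(s)|^q\,d\sigma(s)\,dr .
\]
Because $\alpha\in L^\infty(\partial B)$, this is at most
\[
\|\alpha\|_{L^\infty(\partial B)}\int_0^1 \frac{n}{a}r^{n/a-1}\,\|\mathcal P_r g-g\|_{L^q(\partial B)}^q\,dr .
\]
Note that $\frac na r^{n/a-1}\,dr$ is a probability measure on $(0,1)$. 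The problem therefore reduces to two steps: a radial approximation estimate for the Poisson semigroup, followed by a moment computation.

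\textbf{Step 1 (the main estimate).} I claim that
\[
\|\mathcal P_r g-g\|_{L^q(\partial B)}\le C(n,q,s)\,(1-r)^s\,\|g\|_{W^{s,q}(\partial B)},\qquad 0<r<1.
\]
Since $\int_{\partial B}P(rs,\eta)\,d\sigma(\eta)=1$, we can write
\[
\mathcal P_rg(s)-g(s)=\int_{\partial B}P(rs,\eta)\bigl(g(\eta)-g(s)\bigr)\,d\sigma(\eta).
\]
Applying Jensen's inequality (the kernel is a probability density) gives
\[
|\mathcal P_rg(s)-g(s)|^q\le\int_{\partial B} P(rs,\eta)\,|g(\eta)-g(s)|^q\,d\sigma(\eta).
\]
Next use the standard kernel bound $P(rs,\eta)\lesssim \frac{1-r}{((1-r)+|s-\eta|)^n}$ and set $t=1-r$. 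Because $s<1$, the elementary inequality
\[
\frac{t}{(t+|s-\eta|)^{n}}\le t^{sq}\,\frac{t^{1-sq}}{(t+|s-\eta|)^{n}}\lesssim \frac{t^{sq}}{|s-\eta|^{n-1+sq}}
\]
holds when $sq\le1$. For $sq>1$ it fails, so I would instead split the integral into $|s-\eta|<t$ and $|s-\eta|\ge t$. On the near region, bound $P\lesssim t^{1-n}\le t^{sq}|s-\eta|^{-(n-1+sq)}$. On the far region, bound $P\lesssim t\,|s-\eta|^{-n}$, and note $t|s-\eta|^{-n}\le t^{sq}|s-\eta|^{-(n-1+sq)}$ exactly when $|s-\eta|^{1-sq}\le t^{sq-1}$; this holds on the far region only when $sq\le1$.

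For $sq>1$ a cruder argument suffices. Here $\|\mathcal P_rg-g\|_{L^q}\le 2\|g\|_{L^q}$ always holds, but that gives no decay, so this is the delicate case. I would handle it by interpolation: use the bound $\lesssim t\,\|g\|_{W^{1,q}}$ for $g\in W^{1,q}(\partial B)$, obtained by differentiating along the semigroup, together with the $L^q$ bound. Real interpolation then gives $t^{s}$ for the Besov space $B^{s}_{q,q}=W^{s,q}$ with $0<s<1$, and this works uniformly in $s$. This interpolation route is the approach I would actually commit to, because it sidesteps the case split. After integrating in $s$, the claim follows from the Gagliardo seminorm in the regime where the direct kernel bound works, and from interpolation otherwise. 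This is the step I expect to be the main obstacle, specifically verifying the $W^{1,q}$ endpoint bound $\|\mathcal P_rg-g\|_{L^q}\lesssim (1-r)\|\nabla_Tg\|_{L^q}$ on the sphere. I would prove it via $\partial_r\mathcal P[g]$ and the fact that tangential derivatives commute with the rotation-invariant Poisson operator.

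\textbf{Step 2 (moment bound).} Combining the two steps,
\[
\|\mathcal P[g]-\mathcal R[g]\|_{L^q(B,\mu_a)}^q\le C\|\alpha\|_\infty\|g\|_{W^{s,q}}^q\int_0^1\frac na r^{n/a-1}(1-r)^{sq}\,dr .
\]
The last integral equals $\frac na B\!\left(\frac na,sq+1\right)=\Gamma(sq+1)\frac{\Gamma(n/a+1)}{\Gamma(n/a+sq+1)}\lesssim (a/n)^{sq}$, uniformly for $a\in(0,1]$ by Gautschi/Stirling. Taking $q$-th roots yields $Ca^s\|g\|_{W^{s,q}(\partial B)}$. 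Finally, density of $C^\infty(\partial B)$ in $W^{s,q}(\partial B)$ removes any smoothness needed to make sense of $\mathcal P[g]$.
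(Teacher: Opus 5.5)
Your proposal follows the paper's proof: polar coordinates against the probability measure $\frac na r^{n/a-1}\,dr$, the $L^\infty$ bound on $\alpha$, the Poisson-semigroup estimate $\|\mathcal P_rg-g\|_{L^q(\partial B)}\lesssim(1-r)^s\|g\|_{W^{s,q}(\partial B)}$, and the Beta-function bound $\frac na B(sq+1,\frac na)\lesssim a^{sq}$; the only difference is that the paper cites Dai--Xu for Step~1, while you try to prove it. Your Jensen/Gagliardo argument settles the case $sq\le1$, which includes $q=1$, but for $sq>1$ commuting tangential derivatives with $\mathcal P_r$ does not control $\partial_r\mathcal P[g]$; writing $\mathcal P_r=r^{D}$ with $D=k$ on degree-$k$ harmonics, the linear endpoint reduces to the Riesz-transform estimate $\|Dg\|_{L^q(\partial B)}\lesssim\|\nabla_Tg\|_{L^q(\partial B)}$, which holds for $1<q<\infty$ (and $sq>1$ forces $q>1$), whereas for $q=1$ the linear endpoint fails (only $(1-r)\log\frac{1}{1-r}$ holds), so interpolation cannot replace the case split.
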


\begin{proof}
By the standard approximation estimate for the Poisson semigroup on the sphere
(see, for example, \cite[Chapters~4]{DaiXu2013}),
\[
\|\mathcal P_r g-g\|_{L^q(\partial B)}
\le
C(n,q,s)(1-r)^s\|g\|_{W^{s,q}(\partial B)}.
\]
Since \(\alpha\in L^\infty(\partial B)\), it follows that
\[
\|\mathcal P_r g-g\|_{L^q(\partial B,\alpha)}
\le
C(n,q,s,\|\alpha\|_{L^\infty})(1-r)^s\|g\|_{W^{s,q}(\partial B)}.
\]

Using polar coordinates, we obtain
\[
\|\mathcal P[g]-\mathcal R[g]\|_{L^q(B,\mu_a)}^q
=
\int_0^1
\|\mathcal P_r g-g\|_{L^q(\partial B,\alpha)}^q
\frac{n}{a}r^{n/a-1}\,dr.
\]
Therefore
\[
\|\mathcal P[g]-\mathcal R[g]\|_{L^q(B,\mu_a)}^q
\le
C\|g\|_{W^{s,q}(\partial B)}^q
\int_0^1
(1-r)^{sq}\frac{n}{a}r^{n/a-1}\,dr.
\]

It remains to estimate the last integral. We have
\[
\int_0^1(1-r)^{sq}\frac{n}{a}r^{n/a-1}\,dr
=
\frac{n}{a}B(sq+1,n/a),
\]
where \(B\) is the Euler beta function. Since
\[
B(sq+1,n/a)
=
\frac{\Gamma(sq+1)\Gamma(n/a)}{\Gamma(sq+1+n/a)},
\]
and
\[
\frac{\Gamma(n/a)}{\Gamma(n/a+sq+1)}
\le
C_{n,q,s}\Bigl(\frac{a}{n}\Bigr)^{sq+1}
\qquad\text{for }0<a\le1,
\]
it follows that
\[
\int_0^1(1-r)^{sq}\frac{n}{a}r^{n/a-1}\,dr
\le
Ca^{sq}.
\]
Hence
\[
\|\mathcal P[g]-\mathcal R[g]\|_{L^q(B,\mu_a)}
\le
C a^s\|g\|_{W^{s,q}(\partial B)}.
\]
\end{proof}

We now establish a quantitative
estimate for functions with zero trace. It shows that, when measured against the
concentrating bulk weights \(\mu_a\), the contribution of \(W^{1,p}_0(B)\)-functions
becomes small as \(a\to0\).

\begin{lemma}[Boundary-layer estimate for zero-trace functions]
\label{lem:zero-trace-boundary-layer}
Let \(1<p<n\), \(1<q<\frac{p(n-1)}{n-p}\).
Let also
\[
\delta_{p,q}
:=
\min\left\{
1-\frac1p,\,
1-\frac{n}{p}+\frac{n-1}{q}
\right\}.
\]
Then there exists a constant
\[
C=C(n,p,q,\|\alpha\|_{L^\infty(\partial B)})>0
\]
such that for every \(a\in(0,1]\) and every \(w\in W^{1,p}_0(B)\),
\[
\|w\|_{L^q(B,\mu_a)}
\le
C\,a^{\delta_{p,q}}\|\nabla w\|_{L^p(B)}.
\]
\end{lemma}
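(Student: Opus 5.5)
The plan is to reduce the weighted bulk norm to a family of $L^q$-norms on the concentric spheres $\{|x|=r\}$. We then bound each of these by a power of the distance $1-r$ to the boundary, using that $w$ vanishes on $\partial B$. The $r$-integral against $\rho_a$ is handled by the same Beta-function computation as in Lemma~\ref{lem:poisson-to-radial-in-mu}.

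First, since $\mu_a\le\|\alpha\|_{L^\infty(\partial B)}\rho_a$, polar coordinates give
\[
\|w\|_{L^q(B,\mu_a)}^q\le \|\alpha\|_{L^\infty}\int_0^1 f(r)^q\,\frac{n}{a}r^{n/a-1}\,dr,\qquad f(r):=\Bigl(\int_{\partial B}|w(rs)|^q\,d\sigma(s)\Bigr)^{1/q}.
\]
By density we may take $w\in C_0^\infty(B)$, extended by zero outside $B$. The core claim is the layer estimate
\[
f(r)\le C(n,p,q)\,(1-r)^{\delta_{p,q}}\,\|\nabla w\|_{L^p(B)},\qquad 0<r<1. \tag{$\ast$}
\]
Granting $(\ast)$, we get $\|w\|_{L^q(B,\mu_a)}^q\le C\|\nabla w\|_{L^p}^q\,\frac{n}{a}B(\delta_{p,q}q+1,n/a)$. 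The Gamma-ratio bound used in the proof of Lemma~\ref{lem:poisson-to-radial-in-mu} (with $sq$ replaced by $\delta_{p,q}q$) shows this is at most $Ca^{\delta_{p,q}q}\|\nabla w\|_{L^p}^q$, which is the claim.

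To prove $(\ast)$ for $r\ge 1/2$, set $h=1-r$ and split according to whether $q\ge p$ or $q<p$. This split matches the two terms in $\delta_{p,q}$, since $1-\frac np+\frac{n-1}{q}\le 1-\frac1p$ exactly when $q\ge p$.

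If $q<p$, Hölder on $\partial B$ gives $f(r)\le C\|w(r\cdot)\|_{L^p(\partial B)}$. Writing $w(rs)=-\int_r^1\partial_t w(ts)\,dt$ and applying Hölder in $t$, we get
\[
\int_{\partial B}|w(rs)|^p\,d\sigma\le h^{p-1}\int_r^1\!\!\int_{\partial B}|\nabla w(ts)|^p\,d\sigma\,dt\le C h^{p-1}\|\nabla w\|_{L^p(B)}^p,
\]
using $t\ge1/2$. This gives the exponent $1-\frac1p$.

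If $q\ge p$, cover the shell $\{r<|x|<1+h\}$ by boundedly overlapping curvilinear cubes $Q_i$ of side $\sim h$, each meeting $\{|x|=r\}$ in a face $F_i$ and containing a region outside $B$ where $w=0$. On a unit cube, the trace–Sobolev inequality (valid since $q<p(n-1)/(n-p)$) combined with Poincaré, for functions vanishing on a set of positive measure, gives $\|v\|_{L^q(F)}\le C\|\nabla v\|_{L^p(Q)}$. Rescaling to side $h$ yields
\[
\|w\|_{L^q(F_i)}\le C h^{1-\frac np+\frac{n-1}{q}}\|\nabla w\|_{L^p(Q_i)}.
\]
Summing the $q$-th powers and using $\sum a_i^q\le(\sum a_i^p)^{q/p}$ for $q\ge p$ together with the bounded overlap gives $(\ast)$ with exponent $1-\frac np+\frac{n-1}{q}$.

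For $r<1/2$, the factor $(1-r)^{\delta_{p,q}}$ is bounded below. It then suffices to show $f(r)\le C\|\nabla w\|_{L^p}$. The previous one-dimensional and trace arguments with $h\sim 1$ give this (equivalently, use the trace theorem on the ball of radius $r$, Proposition~\ref{prop:sobolev-ball}, and the Hardy inequality of Proposition~\ref{prop:hardy-ball}), with care near $r\to0$, where the weight $r^{n/a-1}$ makes that region negligible anyway.

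The main obstacle is the $q\ge p$ case of $(\ast)$. It requires a uniform, scale-invariant trace–Poincaré inequality on the curvilinear cubes of size $h$, with constants independent of $h$ and of the cube. I would handle this by flattening with bi-Lipschitz polar charts, whose distortion is uniform for $r\ge1/2$. I would also exploit that $w$ vanishes on the exterior half of each cube, so that no mean value has to be subtracted.
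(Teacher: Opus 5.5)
Your argument is correct in substance, but it follows a genuinely different route from the paper, and one step needs repair near $r=0$ (second paragraph).

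\textbf{Comparison of routes.} The paper never slices $B$ into spheres. It uses only the pointwise bound $\sup_B d^m\rho_a\le C_m a^{m-1}$, with $d=1-|x|$. For $q=p$ it then applies Hardy's inequality directly, and for $q<p$ it uses H\"older with the finite mass $\mu_a(B)$. For $p<q$ it writes $|w|^q=(|w|^p/d^p)^\theta|w|^{(1-\theta)p^*}d^{p\theta}$ with $\theta=(p^*-q)/(p^*-p)$ and interpolates Hardy against Sobolev, which produces the exponent $(p\theta-1)/q=\delta_{p,q}$.

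You instead first prove an $a$-independent decay estimate $(\ast)$ for the spherical $L^q$-norms of zero-trace functions. For $q\le p$ this comes from the fundamental theorem of calculus in the radial variable. For $q\ge p$ it comes from a rescaled trace--Poincar\'e inequality on boundary-layer cells combined with $\ell^p\subset\ell^q$. Only then do you integrate against the radial profile via the Beta function.

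The paper's route is shorter and needs no covering or uniform-constant bookkeeping on curvilinear cells. It also uses the weight only through $\|\alpha\|_{L^\infty}$, $\mu_a(B)$ and $\sup_B d^m\rho_a$, so it applies verbatim to non-radial densities. Your route isolates a sharper, weight-independent intermediate statement. That statement serves any product-type weight $\widetilde\alpha(x)\nu_a(|x|)$ whose radial profile has the right moments near $r=1$. It also makes visible where the two regimes of $\delta_{p,q}$ come from: radial integration versus trace scaling.

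\textbf{Correction near $r=0$.} As stated, $(\ast)$ cannot hold uniformly as $r\to0$, and neither can $f(r)\le C\|\nabla w\|_{L^p(B)}$ on $(0,1/2)$. Take $w_\varepsilon(x)=\eta(x/\varepsilon)$ with $\eta\in C_0^\infty(B)$ and $\eta(0)=1$. Then $\|\nabla w_\varepsilon\|_{L^p}=\varepsilon^{n/p-1}\|\nabla\eta\|_{L^p}\to0$, while $f(r)\to\sigma(\partial B)^{1/q}$ for $r\ll\varepsilon$.

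What the trace theorem on $B_r$ plus Sobolev actually gives, by scaling $v(x)=w(rx)$, is $f(r)\le C\,r^{1-n/p}\|\nabla w\|_{L^p(B)}$. This is enough. Since $q<\frac{p(n-1)}{n-p}<p^*$, we have $q(n/p-1)<n\le n/a$. Hence $\int_0^{1/2}f(r)^q\,\frac{n}{a}r^{n/a-1}\,dr\le C\,2^{-n/a}\|\nabla w\|_{L^p}^q\le C\,a^{q\delta_{p,q}}\|\nabla w\|_{L^p}^q$. With this replacement on $\{r<1/2\}$, your proof goes through.
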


\begin{proof}
Let
\[
d(x):=\dist(x,\partial B)=1-|x|,
\qquad x\in B.
\]
We first record the elementary estimate
\begin{equation}\label{eq:rho-layer-sup}
\sup_{x\in B} d(x)^m\rho_a(x)\le C_m a^{m-1},
\qquad m>0,\ a\in(0,1].
\end{equation}
Indeed, writing \(r=|x|\in(0,1)\), it is enough to estimate
\[
\sup_{0<r<1}(1-r)^m\frac{n}{a}r^{n/a-n}.
\]
Set
\[
f_a(r):=(1-r)^m\frac{n}{a}r^{n/a-n}.
\]
If \(a=1\), then \(f_1(r)=n(1-r)^m\le n\). Let now \(0<a<1\). A direct
calculation shows that the maximum is attained at
\[
r_a=\frac{n/a-n}{n/a-n+m},
\]
and therefore
\[
\sup_{0<r<1}f_a(r)
=
\frac{n}{a}
\left(\frac{m}{n/a-n+m}\right)^m
\left(\frac{n/a-n}{n/a-n+m}\right)^{n/a-n}.
\]
The last factor is bounded by \(1\), while
\[
\frac{m}{n/a-n+m}
=
\frac{ma}{n(1-a)+ma}
\le C_m a.
\]
Hence
\[
\sup_{0<r<1}f_a(r)\le C_m a^{m-1},
\]
which proves \eqref{eq:rho-layer-sup}.

We first treat the case \(q=p\). Since
\[
\mu_a(x)=\alpha_e(x)\rho_a(x),
\]
we have
\[
\int_B |w|^p\,\mu_a\,dx
\le
\|\alpha\|_{L^\infty(\partial B)}
\int_B |w|^p\rho_a\,dx.
\]
Using \eqref{eq:rho-layer-sup} with \(m=p\), we obtain
\[
\rho_a(x)
=
d(x)^{-p}\,d(x)^p\rho_a(x)
\le
C a^{p-1}d(x)^{-p}.
\]
Hence
\[
\int_B |w|^p\,\mu_a\,dx
\le
C a^{p-1}\int_B \frac{|w|^p}{d(x)^p}\,dx.
\]
By the Hardy inequality on the ball,
\[
\int_B \frac{|w|^p}{d(x)^p}\,dx
\le
C\int_B |\nabla w|^p\,dx.
\]
Therefore
\begin{equation}\label{eq:boundary-layer-p}
\|w\|_{L^p(B,\mu_a)}
\le
C a^{(p-1)/p}\|\nabla w\|_{L^p(B)}.
\end{equation}

If \(1<q<p\), then, since
\[
\mu_a(B)=\int_B \mu_a\,dx=\int_{\partial B}\alpha\,d\sigma<\infty,
\]
H\"older's inequality gives
\[
\|w\|_{L^q(B,\mu_a)}
\le
\mu_a(B)^{1/q-1/p}\|w\|_{L^p(B,\mu_a)}.
\]
Combining this with \eqref{eq:boundary-layer-p}, we obtain
\[
\|w\|_{L^q(B,\mu_a)}
\le
C a^{(p-1)/p}\|\nabla w\|_{L^p(B)}.
\]
This proves the claim for \(1<q\le p\).

It remains to consider the case
\[
p<q<\frac{p(n-1)}{n-p}.
\]
Let
\[
p^*:=\frac{np}{n-p}.
\]
Since \(p<q<p^*\), there exists a unique \(\theta\in(0,1)\) such that
\[
q=\theta p+(1-\theta)p^*,
\qquad
\theta=\frac{p^*-q}{p^*-p}.
\]
For a.e. \(x\in B\),
\[
|w(x)|^q
=
\left(\frac{|w(x)|^p}{d(x)^p}\right)^\theta
|w(x)|^{(1-\theta)p^*}d(x)^{p\theta}.
\]
Therefore
\[
\int_B |w|^q\,\mu_a\,dx
\le
\|\alpha\|_{L^\infty(\partial B)}
\int_B
\left(\frac{|w|^p}{d^p}\right)^\theta
|w|^{(1-\theta)p^*}d^{p\theta}\rho_a\,dx.
\]
Applying H\"older's inequality with exponents \(1/\theta\) and \(1/(1-\theta)\),
we get
\[
\int_B |w|^q\,\mu_a\,dx
\le
\|\alpha\|_{L^\infty(\partial B)}
\left(\int_B \frac{|w|^p}{d^p}\,dx\right)^\theta
\left(
\int_B |w|^{p^*}\bigl(d^{p\theta}\rho_a\bigr)^{1/(1-\theta)}\,dx
\right)^{1-\theta}.
\]
Now
\[
\left(
\int_B |w|^{p^*}\bigl(d^{p\theta}\rho_a\bigr)^{1/(1-\theta)}\,dx
\right)^{1-\theta}
\le
\left(\sup_{x\in B}d(x)^{p\theta}\rho_a(x)\right)
\left(\int_B |w|^{p^*}\,dx\right)^{1-\theta}.
\]
Hence, by \eqref{eq:rho-layer-sup} with \(m=p\theta\),
\[
\int_B |w|^q\,\mu_a\,dx
\le
C a^{p\theta-1}
\left(\int_B \frac{|w|^p}{d^p}\,dx\right)^\theta
\left(\int_B |w|^{p^*}\,dx\right)^{1-\theta}.
\]
Using again the Hardy inequality and the Poincar\'e inequality,
we obtain
\[
\int_B |w|^q\,\mu_a\,dx
\le
C a^{p\theta-1}
\|\nabla w\|_{L^p(B)}^{p\theta}
\|\nabla w\|_{L^p(B)}^{(1-\theta)p^*}.
\]
Since
\[
p\theta+(1-\theta)p^*=q,
\]
it follows that
\[
\int_B |w|^q\,\mu_a\,dx
\le
C a^{p\theta-1}\|\nabla w\|_{L^p(B)}^q.
\]
A direct computation shows that
\[
\frac{p\theta-1}{q}
=
\frac{p(n-1)-q(n-p)}{pq}
=
\delta_{p,q}.
\]
This completes the proof.
\end{proof}

We now combine the two auxiliary ingredients established above: the
Poisson-to-radial comparison from Lemma~\ref{lem:poisson-to-radial-in-mu}
and the boundary-layer estimate from
Lemma~\ref{lem:zero-trace-boundary-layer}. This yields a quantitative
approximation of a Sobolev function by the radial extension of its trace with
respect to the concentrating measures \(\mu_a\).

\begin{proposition}[Quantitative approximation by the radial trace extension]
\label{prop:quantitative-ball-trace-approx}
Let \(1<p<n\), \(1<q<\frac{p(n-1)}{n-p}\), and \(\delta_{p,q}\) be as in
Lemma~\ref{lem:zero-trace-boundary-layer}. Then for every
\(s\in(0,\delta_{p,q})\) there exists a constant
\[
C_s=C_s(n,p,q,s,\|\alpha\|_{L^\infty(\partial B)})>0
\]
such that
\[
\|u-\mathcal R[Tu]\|_{L^q(B,\mu_a)}
\le
C_s a^s \|\nabla u\|_{L^p(B)}
\]
for every \(u\in W^{1,p}(B)\) and every \(a\in(0,1]\).
\end{proposition}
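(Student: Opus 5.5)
The plan is to split $u-\mathcal R[Tu]$ into a zero-trace part and a Poisson-versus-radial part, and to estimate each with the two preceding lemmas. First I remove the lower-order terms. Both sides of the desired inequality are invariant under $u\mapsto u-c$ for a constant $c$: indeed $T c=c$, and $\mathcal P[c]=\mathcal R[c]=c$. So I may replace $u$ by $u-u_B$, where $u_B:=\frac{1}{|B|}\int_B u\,dx$. By the Poincar\'e inequality on the ball (Proposition~\ref{prop:poincare-ball}),
\[
\|u-u_B\|_{W^{1,p}(B)}\le C\|\nabla u\|_{L^p(B)} .
\]
From now on I assume $u_B=0$. Set $g:=Tu\in W^{1-1/p,p}(\partial B)$. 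The trace theorem on the ball then gives $\|g\|_{W^{1-1/p,p}(\partial B)}\le C\|\nabla u\|_{L^p(B)}$.

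Next I decompose
\[
u-\mathcal R[g]=\bigl(u-\mathcal P[g]\bigr)+\bigl(\mathcal P[g]-\mathcal R[g]\bigr).
\]
For the first term, Lemma~\ref{lem:poisson-extension-trace} gives $T(\mathcal P[g])=g$. Hence $w:=u-\mathcal P[g]$ has zero trace, so $w\in W^{1,p}_0(B)$, using $\ker T=W^{1,p}_0(B)$ on the smooth domain $B$. Boundedness of $\mathcal P$ gives
\[
\|\nabla w\|_{L^p(B)}\le \|\nabla u\|_{L^p(B)}+C\|g\|_{W^{1-1/p,p}(\partial B)}\le C\|\nabla u\|_{L^p(B)}.
\]
Lemma~\ref{lem:zero-trace-boundary-layer} then yields $\|w\|_{L^q(B,\mu_a)}\le Ca^{\delta_{p,q}}\|\nabla u\|_{L^p(B)}$. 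Since $a\le 1$ and $s<\delta_{p,q}$, this is at most $Ca^s\|\nabla u\|_{L^p(B)}$.

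For the second term, I apply Lemma~\ref{lem:poisson-to-radial-in-mu} with the given $s\in(0,\delta_{p,q})\subset(0,1)$:
\[
\|\mathcal P[g]-\mathcal R[g]\|_{L^q(B,\mu_a)}\le Ca^s\|g\|_{W^{s,q}(\partial B)}.
\]
Because $s<\delta_{p,q}$, Proposition~\ref{prop:fractional-sphere-embedding} gives $\|g\|_{W^{s,q}(\partial B)}\le C\|g\|_{W^{1-1/p,p}(\partial B)}\le C\|\nabla u\|_{L^p(B)}$. This is exactly where the strict inequality $s<\delta_{p,q}$ enters. Adding the two estimates by the triangle inequality in $L^q(B,\mu_a)$ proves the proposition. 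The constant depends on $n,p,q,s$ and $\|\alpha\|_{L^\infty(\partial B)}$ only.

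The step that needs care is the reduction to a pure gradient bound. The trace norm and the Poisson extension are controlled by full $W^{1,p}$ norms, not by $\|\nabla u\|_{L^p}$ alone. The constant-invariance of $u-\mathcal R[Tu]$, combined with the Poincar\'e inequality, is what closes this gap. I would also check that the Poisson extension of a constant is that constant, which holds since $\int_{\partial B}P(x,\eta)\,d\sigma(\eta)=1$.
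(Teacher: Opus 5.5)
Your proposal is correct and follows the paper's proof essentially step for step. You subtract the mean $u_B$ and split $u-\mathcal R[Tu]$ into two pieces. The zero-trace piece $u-\mathcal P[Tu]$ is handled by Lemma~\ref{lem:zero-trace-boundary-layer}, together with boundedness of $\mathcal P$, the trace theorem and Poincar\'e. The Poisson-minus-radial piece is handled by Lemma~\ref{lem:poisson-to-radial-in-mu} and the fractional embedding $W^{1-1/p,p}(\partial B)\hookrightarrow W^{s,q}(\partial B)$ for $s<\delta_{p,q}$.
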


\begin{proof}
Let
\[
u_B:=\fint_B u\,dx,
\qquad
u_0:=u-u_B.
\]
Since both \(u-\mathcal R[Tu]\) and \(\nabla u\) are invariant under adding
constants, we have
\[
u-\mathcal R[Tu]=u_0-\mathcal R[Tu_0],
\qquad
\|\nabla u_0\|_{L^p(B)}=\|\nabla u\|_{L^p(B)}.
\]

We decompose
\[
u_0-\mathcal R[Tu_0]
=
\bigl(u_0-\mathcal P[Tu_0]\bigr)
+
\bigl(\mathcal P[Tu_0]-\mathcal R[Tu_0]\bigr).
\]

Since
\[
T\bigl(u_0-\mathcal P[Tu_0]\bigr)=0,
\]
we have
\[
u_0-\mathcal P[Tu_0]\in W^{1,p}_0(B).
\]
Hence Lemma~\ref{lem:zero-trace-boundary-layer} yields
\[
\|u_0-\mathcal P[Tu_0]\|_{L^q(B,\mu_a)}
\le
C a^{\delta_{p,q}}
\|\nabla(u_0-\mathcal P[Tu_0])\|_{L^p(B)}.
\]
By Lemma~\ref{lem:poisson-extension-trace},
\[
\|\mathcal P[Tu_0]\|_{W^{1,p}(B)}
\le
C\|Tu_0\|_{W^{1-1/p,p}(\partial B)}.
\]
Using the boundedness of the trace operator and the Poincar\'e inequality,
we obtain
\[
\|\mathcal P[Tu_0]\|_{W^{1,p}(B)}
\le
C\|u_0\|_{W^{1,p}(B)}
\le
C\|\nabla u\|_{L^p(B)}.
\]
Therefore
\[
\|u_0-\mathcal P[Tu_0]\|_{L^q(B,\mu_a)}
\le
C a^{\delta_{p,q}}\|\nabla u\|_{L^p(B)}.
\]
Since \(0<a\le1\) and \(s<\delta_{p,q}\),
\(a^{\delta_{p,q}}\le a^s\),
and hence
\begin{equation}\label{eq:first-piece-quant-ball}
\|u_0-\mathcal P[Tu_0]\|_{L^q(B,\mu_a)}
\le
C a^s\|\nabla u\|_{L^p(B)}.
\end{equation}

Next, by Lemma~\ref{lem:poisson-to-radial-in-mu},
\[
\|\mathcal P[Tu_0]-\mathcal R[Tu_0]\|_{L^q(B,\mu_a)}
\le
C a^s\|Tu_0\|_{W^{s,q}(\partial B)}.
\]
Since \(s<\delta_{p,q}\), the fractional Sobolev embedding on \(\partial B\)
recalled in Section~2 gives
\[
W^{1-1/p,p}(\partial B)\hookrightarrow W^{s,q}(\partial B).
\]
Therefore
\[
\|Tu_0\|_{W^{s,q}(\partial B)}
\le
C\|Tu_0\|_{W^{1-1/p,p}(\partial B)}
\le
C\|u_0\|_{W^{1,p}(B)}
\le
C\|\nabla u\|_{L^p(B)}.
\]
Thus
\begin{equation}\label{eq:second-piece-quant-ball}
\|\mathcal P[Tu_0]-\mathcal R[Tu_0]\|_{L^q(B,\mu_a)}
\le
C a^s\|\nabla u\|_{L^p(B)}.
\end{equation}

Combining \eqref{eq:first-piece-quant-ball} and
\eqref{eq:second-piece-quant-ball}, we conclude that
\[
\|u-\mathcal R[Tu]\|_{L^q(B,\mu_a)}
=
\|u_0-\mathcal R[Tu_0]\|_{L^q(B,\mu_a)}
\le
C_s a^s\|\nabla u\|_{L^p(B)}.
\]
This proves the proposition.
\end{proof}

From the above proposition immediately follows the next two corollaries.

\begin{corollary}[Qualitative approximation along concentrating measures]
\label{cor:qualitative-ball-trace-approx}
Let \(1<p<n\), \(1<q<\frac{p(n-1)}{n-p}\), \(a_k\to0\), and let \(\{u_k\}\) be bounded in \(W^{1,p}(B)\). Then
\[
u_k-\mathcal R[Tu_k]\to0
\qquad\text{strongly in }L^q(B,\mu_{a_k}).
\]
\end{corollary}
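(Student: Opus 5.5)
This follows directly from Proposition~\ref{prop:quantitative-ball-trace-approx}. Fix any exponent \(s\in(0,\delta_{p,q})\); since \(\delta_{p,q}>0\) in the range \(1<q<\frac{p(n-1)}{n-p}\), such an \(s\) exists, for instance \(s=\delta_{p,q}/2\). Since \(\{u_k\}\) is bounded in \(W^{1,p}(B)\), there is \(M<\infty\) with
\[
\|\nabla u_k\|_{L^p(B)}\le M
\qquad\text{for all }k.
\]

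Apply the proposition with \(u=u_k\) and \(a=a_k\). This is legitimate because \(a_k\in(0,1]\) for all large \(k\), and the finitely many exceptional indices are irrelevant to the limit. We obtain
\[
\|u_k-\mathcal R[Tu_k]\|_{L^q(B,\mu_{a_k})}
\le
C_s a_k^s \|\nabla u_k\|_{L^p(B)}
\le
C_s M a_k^s .
\]
The constant \(C_s\) depends only on \(n,p,q,s,\|\alpha\|_{L^\infty(\partial B)}\), and in particular not on \(k\). Since \(a_k\to0\) and \(s>0\), the right-hand side tends to \(0\). This is the asserted convergence.

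The only point requiring care is interpretive rather than technical. The measure \(\mu_{a_k}\) changes with \(k\), so "strong convergence in \(L^q(B,\mu_{a_k})\)" must be read as the vanishing of the norms \(\|u_k-\mathcal R[Tu_k]\|_{L^q(B,\mu_{a_k})}\). This is exactly what the estimate above gives, with the uniformity of \(C_s\) in \(a\) being the essential input. No compactness or subsequence extraction is needed; only a bound on the gradient norms is used, and boundedness in \(W^{1,p}(B)\) supplies it.
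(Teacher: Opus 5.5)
Your proof is correct and is the argument the paper intends; the paper simply says this corollary follows immediately from Proposition~\ref{prop:quantitative-ball-trace-approx}. That is, for a fixed $s\in(0,\delta_{p,q})$ one has $\|u_k-\mathcal R[Tu_k]\|_{L^q(B,\mu_{a_k})}\le C_s a_k^s\sup_j\|\nabla u_j\|_{L^p(B)}\to0$, and your reading of convergence in the varying spaces $L^q(B,\mu_{a_k})$ as the vanishing of these norms is the right one.
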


\begin{corollary}[Ball concentration of weighted moments]
\label{cor:ball-moment-convergence}
Let \(1<p<n\), \(1<q<\frac{p(n-1)}{n-p}\), \(a_k\to0\), and let \(\{u_k\}\) be bounded in \(W^{1,p}(B)\). Assume that
there exists \(g\in L^q(\partial B,\alpha)\) such that
\[
Tu_k\to g
\qquad\text{strongly in }L^q(\partial B,\alpha).
\]
Then
\[
\int_B |u_k|^q\,\mu_{a_k}\,dx
\longrightarrow
\int_{\partial B}|g|^q\alpha\,d\sigma,
\]
and
\[
\int_B |u_k|^{q-2}u_k\,\mu_{a_k}\,dx
\longrightarrow
\int_{\partial B}|g|^{q-2}g\,\alpha\,d\sigma.
\]
\end{corollary}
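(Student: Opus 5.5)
The statement to prove is Corollary~\ref{cor:ball-moment-convergence}. The plan is to reduce both moment convergences to statements about the radial extensions \(\mathcal R[Tu_k]\). For these, the weighted bulk integral collapses exactly to a boundary integral. The key identity is that for any \(h\in L^1(\partial B,\alpha)\), polar coordinates give
\[
\int_B \mathcal R[h]\,\mu_a\,dx=\int_0^1\frac{n}{a}r^{n/a-1}\,dr\int_{\partial B}h\,\alpha\,d\sigma=\int_{\partial B}h\,\alpha\,d\sigma,
\]
since \(\int_0^1 \frac{n}{a}r^{n/a-1}dr=1\). Applying this to \(h=|f|^q\) shows that \(\|\mathcal R[f]\|_{L^q(B,\mu_a)}=\|f\|_{L^q(\partial B,\alpha)}\) for every \(a\). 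Thus \(\mathcal R\) is an isometry \(L^q(\partial B,\alpha)\to L^q(B,\mu_a)\), uniformly in \(a\).

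For the first moment, set \(g_k:=Tu_k\). By Corollary~\ref{cor:qualitative-ball-trace-approx} we have \(\|u_k-\mathcal R[g_k]\|_{L^q(B,\mu_{a_k})}\to0\). By the triangle inequality and the isometry,
\[
\bigl|\|u_k\|_{L^q(B,\mu_{a_k})}-\|g_k\|_{L^q(\partial B,\alpha)}\bigr|\le \|u_k-\mathcal R[g_k]\|_{L^q(B,\mu_{a_k})}\to0.
\]
Since \(g_k\to g\) in \(L^q(\partial B,\alpha)\), we get \(\|g_k\|_{L^q(\partial B,\alpha)}\to\|g\|_{L^q(\partial B,\alpha)}\). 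Raising to the power \(q\) gives the first claim. Note that \(\|g_k\|\) is bounded, which keeps everything finite.

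For the odd moment, write \(\Phi_q(t)=|t|^{q-2}t\) and split
\[
\int_B \Phi_q(u_k)\mu_{a_k}\,dx-\int_{\partial B}\Phi_q(g)\alpha\,d\sigma
=\int_B\bigl(\Phi_q(u_k)-\Phi_q(\mathcal R[g_k])\bigr)\mu_{a_k}\,dx+\int_{\partial B}\bigl(\Phi_q(g_k)-\Phi_q(g)\bigr)\alpha\,d\sigma,
\]
using the identity above with \(h=\Phi_q(g_k)\) (note \(\Phi_q(\mathcal R[g_k])=\mathcal R[\Phi_q(g_k)]\)). The second term tends to zero because the Nemytskii map \(\Phi_q:L^q(\partial B,\alpha)\to L^{q'}(\partial B,\alpha)\) is continuous and \(\alpha\,d\sigma\) has finite mass, so \(L^{q'}\subset L^1\).

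The first term is the main point, because the measures \(\mu_{a_k}\) vary with \(k\), so continuity of a fixed Nemytskii operator cannot be quoted directly. I would use the elementary pointwise inequality
\[
|\Phi_q(s)-\Phi_q(t)|\le C_q|s-t|\bigl(|s|+|t|\bigr)^{q-2}\quad (q\ge2),\qquad |\Phi_q(s)-\Phi_q(t)|\le C_q|s-t|^{q-1}\quad(1<q<2).
\]
For \(q\ge2\), Hölder's inequality with exponents \(q\) and \(q/(q-1)\) and total mass \(A_\alpha\) bounds the term by
\[
C\,\|u_k-\mathcal R[g_k]\|_{L^q(\mu_{a_k})}\bigl(\|u_k\|_{L^q(\mu_{a_k})}+\|g_k\|_{L^q(\alpha)}\bigr)^{q-2}A_\alpha^{1/q}.
\]
This tends to zero, since the norms in parentheses are bounded by the first step. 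For \(1<q<2\), the term is bounded by \(C\|u_k-\mathcal R[g_k]\|_{L^q(\mu_{a_k})}^{q-1}A_\alpha^{(2-q)/q}\), which also tends to zero. This gives the second convergence.
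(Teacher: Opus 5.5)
Your proposal is correct and follows essentially the same route as the paper: you pass to \(h_k=\mathcal R[Tu_k]\) via Corollary~\ref{cor:qualitative-ball-trace-approx}, use the polar-coordinate identity making \(\mathcal R\) an isometry \(L^q(\partial B,\alpha)\to L^q(B,\mu_a)\), and control \(\Phi_q(u_k)-\Phi_q(h_k)\) with the same two-case pointwise inequalities and H\"older. Your reverse-triangle-inequality treatment of the \(q\)-th moment is a slightly cleaner substitute for the paper's pointwise bound on \(\bigl||s|^q-|t|^q\bigr|\), and there is one harmless slip: for \(1<q<2\), H\"older gives the factor \(A_\alpha^{1/q}\), not \(A_\alpha^{(2-q)/q}\), which does not affect the conclusion.
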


\begin{proof}
Set
\[
h_k:=\mathcal R[Tu_k].
\]
By Corollary~\ref{cor:qualitative-ball-trace-approx},
\[
u_k-h_k\to0
\qquad\text{strongly in }L^q(B,\mu_{a_k}).
\]

Using polar coordinates, we obtain
\[
\int_B |h_k|^q\,\mu_{a_k}\,dx
=
\int_{\partial B}|Tu_k|^q\alpha\,d\sigma
\]
and
\[
\int_B |h_k|^{q-2}h_k\,\mu_{a_k}\,dx
=
\int_{\partial B}|Tu_k|^{q-2}Tu_k\,\alpha\,d\sigma.
\]
Since \(Tu_k\to g\) strongly in \(L^q(\partial B,\alpha)\), it follows that
\[
\int_B |h_k|^q\,\mu_{a_k}\,dx
\longrightarrow
\int_{\partial B}|g|^q\alpha\,d\sigma
\]
and
\[
\int_B |h_k|^{q-2}h_k\,\mu_{a_k}\,dx
\longrightarrow
\int_{\partial B}|g|^{q-2}g\,\alpha\,d\sigma.
\]

It remains to show that the same limits hold with \(u_k\) in place of \(h_k\).

First, by the pointwise inequality
\[
\bigl||s|^q-|t|^q\bigr|
\le
C_q\bigl(|s|^{q-1}+|t|^{q-1}\bigr)|s-t|,
\qquad s,t\in\mathbb R,
\]
and H\"older's inequality, we get
\[
\int_B \bigl||u_k|^q-|h_k|^q\bigr|\,\mu_{a_k}\,dx
\le
C_q\Bigl(
\|u_k\|_{L^q(B,\mu_{a_k})}^{q-1}
+
\|h_k\|_{L^q(B,\mu_{a_k})}^{q-1}
\Bigr)
\|u_k-h_k\|_{L^q(B,\mu_{a_k})}.
\]
Now
\[
\|h_k\|_{L^q(B,\mu_{a_k})}^q
=
\|Tu_k\|_{L^q(\partial B,\alpha)}^q,
\]
hence \(\{h_k\}\) is bounded in \(L^q(B,\mu_{a_k})\). Therefore \(\{u_k\}\) is
also bounded in \(L^q(B,\mu_{a_k})\), since \(u_k-h_k\to0\) in that space.
Consequently,
\[
\int_B \bigl||u_k|^q-|h_k|^q\bigr|\,\mu_{a_k}\,dx\to0,
\]
which proves
\[
\int_B |u_k|^q\,\mu_{a_k}\,dx
\longrightarrow
\int_{\partial B}|g|^q\alpha\,d\sigma.
\]

For the nonlinear moments, set
\[
\Phi_q(t):=|t|^{q-2}t,
\qquad
q':=\frac{q}{q-1}.
\]
We claim that
\[
\|\Phi_q(u_k)-\Phi_q(h_k)\|_{L^{q'}(B,\mu_{a_k})}\to0.
\]
Indeed, if \(q\ge2\), then
\[
|\Phi_q(s)-\Phi_q(t)|
\le
C_q\bigl(|s|^{q-2}+|t|^{q-2}\bigr)|s-t|,
\]
and H\"older's inequality gives
\[
\|\Phi_q(u_k)-\Phi_q(h_k)\|_{L^{q'}(B,\mu_{a_k})}
\le
C_q
\Bigl(
\|u_k\|_{L^q(B,\mu_{a_k})}^{q-2}
+
\|h_k\|_{L^q(B,\mu_{a_k})}^{q-2}
\Bigr)
\|u_k-h_k\|_{L^q(B,\mu_{a_k})}.
\]
If \(1<q<2\), then
\[
|\Phi_q(s)-\Phi_q(t)|
\le
C_q|s-t|^{q-1},
\]
and therefore
\[
\|\Phi_q(u_k)-\Phi_q(h_k)\|_{L^{q'}(B,\mu_{a_k})}^{q'}
\le
C_q\int_B |u_k-h_k|^q\,\mu_{a_k}\,dx.
\]
In both cases we obtain
\[
\|\Phi_q(u_k)-\Phi_q(h_k)\|_{L^{q'}(B,\mu_{a_k})}\to0.
\]

Since
\[
\mu_{a_k}(B)=\int_{\partial B}\alpha\,d\sigma
\]
is finite and independent of \(k\), H\"older's inequality implies
\[
\left|
\int_B \Phi_q(u_k)\,\mu_{a_k}\,dx
-
\int_B \Phi_q(h_k)\,\mu_{a_k}\,dx
\right|
\le
\mu_{a_k}(B)^{1/q}
\|\Phi_q(u_k)-\Phi_q(h_k)\|_{L^{q'}(B,\mu_{a_k})}
\to0.
\]
Hence
\[
\int_B |u_k|^{q-2}u_k\,\mu_{a_k}\,dx
\longrightarrow
\int_{\partial B}|g|^{q-2}g\,\alpha\,d\sigma.
\]
\end{proof}

\subsection{Transfer from the ball to admissible domains}

In the previous subsection we established the concentration mechanism on
the unit ball. We now transfer these results to an admissible domain
\(\Omega\) through the change-of-variables scheme induced by the transfer map
\(\varphi\).

Throughout this subsection we assume that \(\Omega\subset\mathbb R^n\) is admissible, i.e. satisfies
\((B1)\)--\((B3)\), and we fix an admissible transfer mapping
\[
\varphi:\overline B\to\overline\Omega.
\]

\begin{lemma}[Weighted boundary concentration on \(\Omega\)]
\label{lem:omega-concentration}
Let \(1<p<n\), \(1<q<\frac{p(n-1)}{n-p}\), let \(a_k\to0\), and let
\(\{u_k\}\) be bounded in \(W^{1,p}(\Omega)\). Assume that there exists
\(g\in L^q(\partial\Omega,\beta)\) such that
\[
Tu_k\to g
\qquad\text{strongly in }L^q(\partial\Omega,\beta).
\]
Then
\[
\int_\Omega |u_k|^q\,\gamma_{a_k}\,dy
\longrightarrow
\int_{\partial\Omega}|g|^q\,\beta\,dS,
\]
and
\[
\int_\Omega |u_k|^{q-2}u_k\,\gamma_{a_k}\,dy
\longrightarrow
\int_{\partial\Omega}|g|^{q-2}g\,\beta\,dS.
\]
\end{lemma}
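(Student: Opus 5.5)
The plan is to pull everything back to the ball through \(\varphi\) and then apply Corollary~\ref{cor:ball-moment-convergence}. Set \(v_k:=u_k\circ\varphi\). By Theorem~\ref{comp_w} (which applies because of (A1)--(A2), since \(r>n/p\) and \(B\) is bounded give \(J_\varphi\in L^{n/p}(B)\)), the sequence \(\{v_k\}\) is bounded in \(W^{1,p}(B)\). By the change-of-variables identity for \(\gamma_a\) recorded after Definition~\ref{def:admissible-weights},
\[
\int_\Omega |u_k|^q\gamma_{a_k}\,dy=\int_B |v_k|^q\mu_{a_k}\,dx,
\qquad
\int_\Omega |u_k|^{q-2}u_k\gamma_{a_k}\,dy=\int_B |v_k|^{q-2}v_k\mu_{a_k}\,dx.
\]
For the signed integrand this identity is applied separately to the positive and negative parts. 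Both sides are finite because \(v_k\in L^q(B,\mu_{a_k})\) and \(\mu_{a_k}(B)<\infty\).

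Next I transfer the boundary hypothesis. Put \(\tilde g:=g\circ\varphi_\partial\) on \(\partial B\). By Theorem~\ref{thm:induced-trace-operator}, \(Tu_k=(T_Bv_k)\circ\varphi_\partial^{-1}\), so \(T_Bv_k=(Tu_k)\circ\varphi_\partial\). The boundary change-of-variables formula \eqref{eq:weak-cov-inverse} is applied to the nonnegative function \(f=|Tu_k-g|^q\,\alpha\circ\varphi_\partial^{-1}\). Since \(\beta=(\alpha\circ\varphi_\partial^{-1})J^\partial_{\varphi^{-1}}\), this gives
\[
\int_{\partial B}|T_Bv_k-\tilde g|^q\alpha\,d\sigma=\int_{\partial\Omega}|Tu_k-g|^q\beta\,dS\to0 .
\]
The same identity with \(f=|g|^q\alpha\circ\varphi_\partial^{-1}\) shows that \(\tilde g\in L^q(\partial B,\alpha)\). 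Therefore \(T_Bv_k\to\tilde g\) strongly in \(L^q(\partial B,\alpha)\).

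Corollary~\ref{cor:ball-moment-convergence} now yields
\[
\int_B|v_k|^q\mu_{a_k}\,dx\to\int_{\partial B}|\tilde g|^q\alpha\,d\sigma,
\qquad
\int_B|v_k|^{q-2}v_k\mu_{a_k}\,dx\to\int_{\partial B}|\tilde g|^{q-2}\tilde g\,\alpha\,d\sigma.
\]
Applying \eqref{eq:weak-cov-inverse} once more, with the positive and negative parts of \(|g|^{q-2}g\,\alpha\circ\varphi_\partial^{-1}\) treated separately, converts the right-hand sides into \(\int_{\partial\Omega}|g|^q\beta\,dS\) and \(\int_{\partial\Omega}|g|^{q-2}g\,\beta\,dS\). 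Integrability of the second integral follows from \(g\in L^q(\partial\Omega,\beta)\) and \(\int_{\partial\Omega}\beta\,dS<\infty\), via H\"older.

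The main obstacle is the identification \(T_B(u_k\circ\varphi)=(Tu_k)\circ\varphi_\partial\) as elements of \(L^q(\partial B,\alpha)\). This is built into the definition of the induced trace in Theorem~\ref{thm:induced-trace-operator}. The one thing to check is that equality \(\beta\,dS\)-a.e. on \(\partial\Omega\) transfers to equality \(\alpha\,d\sigma\)-a.e. on \(\partial B\). I would obtain this by applying \eqref{eq:weak-cov-inverse} to the indicator of the exceptional set multiplied by \(\alpha\circ\varphi_\partial^{-1}\), which shows that its preimage is \(\alpha\,d\sigma\)-null. All remaining steps are routine applications of the change-of-variables formulas to nonnegative functions.
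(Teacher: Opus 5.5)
Your proposal is correct and follows essentially the same route as the paper. Both arguments pull back to $v_k=u_k\circ\varphi$, use the composition operator together with the definition of the induced trace and \eqref{eq:weak-cov-inverse} to get $T_Bv_k\to g\circ\varphi_\partial$ in $L^q(\partial B,\alpha)$, apply Corollary~\ref{cor:ball-moment-convergence}, and transport both sides back by the change-of-variables formulas. Your extra bookkeeping only makes explicit what the paper leaves implicit: deriving $J_\varphi\in L^{n/p}(B)$ from (A2), splitting the signed integrands, and checking that $\beta\,dS$-null sets pull back to $\alpha\,d\sigma$-null sets.
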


\begin{proof}
Set
\[
v_k:=u_k\circ\varphi.
\]
Since \(\varphi^\ast:u\mapsto u\circ\varphi\) is a bounded composition operator
from \(W^{1,p}(\Omega)\) to \(W^{1,p}(B)\), the sequence \(\{v_k\}\) is bounded
in \(W^{1,p}(B)\).

By the definition of the induced trace operator and the boundary
change-of-variables formula,
\[
T_Bv_k=(Tu_k)\circ\varphi_\partial
\qquad\text{in }L^q(\partial B,\alpha),
\]
where \(T_B\) denotes the classical trace operator on \(B\). Hence
\[
T_Bv_k\to g\circ\varphi_\partial
\qquad\text{strongly in }L^q(\partial B,\alpha).
\]

Applying Corollary~\ref{cor:ball-moment-convergence} to the sequence
\(\{v_k\}\), we obtain
\[
\int_B |v_k|^q\,\mu_{a_k}\,dx
\longrightarrow
\int_{\partial B}|g(\varphi_\partial(s))|^q\,\alpha(s)\,d\sigma(s),
\]
and
\[
\int_B |v_k|^{q-2}v_k\,\mu_{a_k}\,dx
\longrightarrow
\int_{\partial B}|g(\varphi_\partial(s))|^{q-2}g(\varphi_\partial(s))\,
\alpha(s)\,d\sigma(s).
\]

Using the change-of-variables formulae, we
rewrite the left-hand sides as
\[
\int_B |v_k|^q\,\mu_{a_k}\,dx
=
\int_\Omega |u_k|^q\,\gamma_{a_k}\,dy,
\qquad
\int_B |v_k|^{q-2}v_k\,\mu_{a_k}\,dx
=
\int_\Omega |u_k|^{q-2}u_k\,\gamma_{a_k}\,dy,
\]
and the right-hand sides as
\[
\int_{\partial B}|g(\varphi_\partial(s))|^q\,\alpha(s)\,d\sigma(s)
=
\int_{\partial\Omega}|g|^q\,\beta\,dS,
\]
\[
\int_{\partial B}|g(\varphi_\partial(s))|^{q-2}g(\varphi_\partial(s))\,
\alpha(s)\,d\sigma(s)
=
\int_{\partial\Omega}|g|^{q-2}g\,\beta\,dS.
\]
This proves the claim.
\end{proof}

The quantitative approximation obtained on the ball also transfers to
\(\Omega\). As a consequence, one can compare the bulk and boundary quotient
seminorms that define the sharp weighted Poincar\'e and trace constants.

\begin{lemma}[Quantitative comparison of quotient seminorms]
\label{lem:quotient-seminorm-comparison}
Let \(1<p<n\), \(1<q<\frac{p(n-1)}{n-p}\), and \(\Omega\subset\mathbb R^n\) be admissible. Let also
\[
\delta_{p,q}
:=
\min\left\{
1-\frac1p,\,
1-\frac{n}{p}+\frac{n-1}{q}
\right\}.
\]
Then for every \(s\in(0,\delta_{p,q})\) there exists a constant \(C_s>0\) such
that for every \(a\in(0,1]\) and every \(u\in W^{1,p}(\Omega)\),
\[
\left|
\inf_{c\in\mathbb R}\|u-c\|_{L^q(\Omega,\gamma_a)}
-
\inf_{c\in\mathbb R}\|Tu-c\|_{L^q(\partial\Omega,\beta)}
\right|
\le
C_s a^s\|\nabla u\|_{L^p(\Omega)}.
\]
\end{lemma}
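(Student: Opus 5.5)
The plan is to transfer the quotient seminorms to the ball through \(\varphi\), compare them there with Proposition~\ref{prop:quantitative-ball-trace-approx}, and return to \(\Omega\). Fix \(u\in W^{1,p}(\Omega)\), set \(v:=u\circ\varphi\in W^{1,p}(B)\), and for \(c\in\mathbb R\) use \(v-c=(u-c)\circ\varphi\). By the bulk change-of-variables formula \eqref{chvf2} and the definition of \(\gamma_a\),
\[
\|u-c\|_{L^q(\Omega,\gamma_a)}=\|v-c\|_{L^q(B,\mu_a)} .
\]
By the definition of the induced trace in Theorem~\ref{thm:induced-trace-operator}, \(Tu\circ\varphi_\partial=T_Bv\). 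The boundary formula \eqref{eq:weak-cov-inverse} and the definition of \(\beta\) then give
\[
\|Tu-c\|_{L^q(\partial\Omega,\beta)}=\|T_Bv-c\|_{L^q(\partial B,\alpha)} .
\]
Finally, polar coordinates together with \(\int_0^1\frac{n}{a}r^{n/a-1}\,dr=1\) yield
\[
\|T_Bv-c\|_{L^q(\partial B,\alpha)}=\|\mathcal R[T_Bv]-c\|_{L^q(B,\mu_a)},
\]
since \(\mathcal R[T_Bv]-c=\mathcal R[T_Bv-c]\) and \(\mu_a=\widetilde\alpha\rho_a\).

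The comparison is then the Lipschitz property of the infimum. For each \(c\), the triangle inequality in \(L^q(B,\mu_a)\) gives
\[
\bigl|\|v-c\|_{L^q(B,\mu_a)}-\|\mathcal R[T_Bv]-c\|_{L^q(B,\mu_a)}\bigr|\le \|v-\mathcal R[T_Bv]\|_{L^q(B,\mu_a)} .
\]
Taking infima over \(c\) on both sides preserves this bound, because \(|\inf f-\inf g|\le\sup|f-g|\) and both infima are finite. Proposition~\ref{prop:quantitative-ball-trace-approx} bounds the right-hand side by \(C_s a^s\|\nabla v\|_{L^p(B)}\) for every \(s\in(0,\delta_{p,q})\), uniformly in \(a\in(0,1]\).

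To return to \(\Omega\), I use the gradient estimate for weak \(p\)-quasiconformal maps already invoked in the proof of Theorem~\ref{thm:bulk-jacobian}:
\[
\|\nabla(u\circ\varphi)\|_{L^p(B)}\le K_p(\varphi)\|\nabla u\|_{L^p(\Omega)} .
\]
This gives the claim with constant \(C_sK_p(\varphi)\). Note that I only need this seminorm bound, not the full \(W^{1,p}\) boundedness of the composition operator. Proposition~\ref{prop:quantitative-ball-trace-approx} is stated purely with \(\|\nabla\cdot\|_{L^p}\) on the right, so no lower-order terms appear.

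The main point to check carefully is the trace identity \(T_B(u\circ\varphi)=(Tu)\circ\varphi_\partial\) holding \(\alpha\,d\sigma\)-a.e. on \(\partial B\), and not merely \(J^\partial_{\varphi^{-1}}\,dS\)-a.e. on \(\partial\Omega\). The plan is to take this as the definition of \(T\), i.e.\ \(Tu:=(T_Bv)\circ\varphi_\partial^{-1}\). The identity on \(\partial B\) is then exact. The \(L^q(\partial\Omega,\beta)\)-norms are computed through \eqref{eq:weak-cov-inverse} applied to the nonnegative function \(|Tu-c|^q\,\alpha\circ\varphi_\partial^{-1}\), whose composition with \(\varphi_\partial\) is \(|T_Bv-c|^q\alpha\). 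So the equalities hold without any ambiguity about null sets.
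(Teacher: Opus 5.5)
Your proposal is correct and follows the paper's proof essentially step for step. You transfer to the ball via $v=u\circ\varphi$, identify the two quotient seminorms as infima over $c$ of $\|v-c\|_{L^q(B,\mu_a)}$ and $\|\mathcal R[T_Bv]-c\|_{L^q(B,\mu_a)}$, compare them via the triangle inequality and Proposition~\ref{prop:quantitative-ball-trace-approx}, and return to $\Omega$ with $\|\nabla(u\circ\varphi)\|_{L^p(B)}\le K_p(\varphi)\|\nabla u\|_{L^p(\Omega)}$. Your estimate $|\inf f-\inf g|\le\sup|f-g|$ just combines the paper's two one-sided inequalities, and your check of the trace identity via the definition of $T$ in Theorem~\ref{thm:induced-trace-operator} spells out what the paper states in one line (the membership $v\in W^{1,p}(B)$ itself still comes from Theorem~\ref{comp_w}).
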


\begin{proof}
Set
\[
v:=u\circ\varphi.
\]
By the bulk change-of-variables formula,
\[
\|u-c\|_{L^q(\Omega,\gamma_a)}
=
\|v-c\|_{L^q(B,\mu_a)}
\qquad\text{for every }c\in\mathbb R.
\]
Similarly, by the definition of the induced trace operator and the boundary
change-of-variables formula,
\[
\|Tu-c\|_{L^q(\partial\Omega,\beta)}
=
\|T_Bv-c\|_{L^q(\partial B,\alpha)},
\]
where \(T_B\) denotes the classical trace operator on \(B\).

By Proposition~\ref{prop:quantitative-ball-trace-approx}, for every
\(s\in(0,\delta_{p,q})\),
\[
\|v-\mathcal R[T_Bv]\|_{L^q(B,\mu_a)}
\le
C_s a^s\|\nabla v\|_{L^p(B)}.
\]
Since \(\varphi\) is admissible, we have
\[
\|\nabla v\|_{L^p(B)}
=
\|\nabla(u\circ\varphi)\|_{L^p(B)}
\le
C_\varphi\|\nabla u\|_{L^p(\Omega)}.
\]
Hence
\begin{equation}\label{eq:quotient-proof-basic-approx}
\|v-\mathcal R[T_Bv]\|_{L^q(B,\mu_a)}
\le
C_s a^s\|\nabla u\|_{L^p(\Omega)}.
\end{equation}

Let \(c\in\mathbb R\). Since
\[
\mathcal R[T_Bv-c]=\mathcal R[T_Bv]-c,
\]
the triangle inequality gives
\[
\|v-c\|_{L^q(B,\mu_a)}
\le
\|\mathcal R[T_Bv-c]\|_{L^q(B,\mu_a)}
+
\|v-\mathcal R[T_Bv]\|_{L^q(B,\mu_a)}.
\]
By the definition of \(\mu_a\),
\[
\|\mathcal R[T_Bv-c]\|_{L^q(B,\mu_a)}
=
\|T_Bv-c\|_{L^q(\partial B,\alpha)},
\]
because the radial density \(\rho_a\) has total mass \(1\) in the radial
variable. Therefore, by \eqref{eq:quotient-proof-basic-approx},
\[
\|v-c\|_{L^q(B,\mu_a)}
\le
\|T_Bv-c\|_{L^q(\partial B,\alpha)}
+
C_s a^s\|\nabla u\|_{L^p(\Omega)}.
\]
Taking the infimum over \(c\in\mathbb R\), we obtain
\[
\inf_{c\in\mathbb R}\|v-c\|_{L^q(B,\mu_a)}
\le
\inf_{c\in\mathbb R}\|T_Bv-c\|_{L^q(\partial B,\alpha)}+C_s a^s\|\nabla u\|_{L^p(\Omega)}.
\]

Conversely, for every \(c\in\mathbb R\),
\[
\|T_Bv-c\|_{L^q(\partial B,\alpha)}
=
\|\mathcal R[T_Bv-c]\|_{L^q(B,\mu_a)}.
\]
Applying the triangle inequality once again, we get
\[
\|\mathcal R[T_Bv-c]\|_{L^q(B,\mu_a)}
\le
\|v-c\|_{L^q(B,\mu_a)}
+
\|v-\mathcal R[T_Bv]\|_{L^q(B,\mu_a)}.
\]
Hence, by \eqref{eq:quotient-proof-basic-approx},
\[
\|T_Bv-c\|_{L^q(\partial B,\alpha)}
\le
\|v-c\|_{L^q(B,\mu_a)}
+
C_s a^s\|\nabla u\|_{L^p(\Omega)}.
\]
Taking the infimum over \(c\in\mathbb R\), we arrive at
\[
\inf_{c\in\mathbb R}\|T_Bv-c\|_{L^q(\partial B,\alpha)}
\le
\inf_{c\in\mathbb R}\|v-c\|_{L^q(B,\mu_a)}+C_s a^s\|\nabla u\|_{L^p(\Omega)}.
\]

Combining the two inequalities and returning to \(u\) by the change-of-variables formulae, we obtain
\[
\left|
\inf_{c\in\mathbb R}\|u-c\|_{L^q(\Omega,\gamma_a)}
-
\inf_{c\in\mathbb R}\|Tu-c\|_{L^q(\partial\Omega,\beta)}
\right|
\le
C_s a^s\|\nabla u\|_{L^p(\Omega)}.
\]
This completes the proof.
\end{proof}

\section{Main limit theorem and corollaries}

\begin{theorem}[Quantitative Neumann-to-Steklov convergence]
\label{thm:quantitative-neumann-to-steklov}
Let \(1<p<n\), \(1<q<\frac{p(n-1)}{n-p}\), and let
\(\Omega\subset\mathbb R^n\) be admissible. Let
\[
\delta_{p,q}
:=
\min\left\{
1-\frac1p,\,
1-\frac{n}{p}+\frac{n-1}{q}
\right\}.
\]
Then for every \(s\in(0,\delta_{p,q})\) there exists a constant \(C_s>0\) such that
\[
\left|
C^N_{p,q}(\gamma_a)-C^{St}_{p,q}(\beta)
\right|
\le
C_s a^s
\qquad\text{for every }a\in(0,1].
\]
Consequently,
\[
C^N_{p,q}(\gamma_a)\to C^{St}_{p,q}(\beta)
\qquad\text{as }a\to0,
\]
and, for all sufficiently small \(a>0\),
\[
\left|
\Lambda^N_{p,q}(\gamma_a)-\Lambda^{St}_{p,q}(\beta)
\right|
\le
C_s^\Lambda a^s,
\]
where
\[
C_s^\Lambda
:=
p\,2^{p+1}\bigl(C^{St}_{p,q}(\beta)\bigr)^{-p-1} C_s.
\]
In particular,
\[
\Lambda^N_{p,q}(\gamma_a)\to \Lambda^{St}_{p,q}(\beta)
\qquad\text{as }a\to0.
\]
\end{theorem}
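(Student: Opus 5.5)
The plan is to read off everything from Lemma~\ref{lem:quotient-seminorm-comparison} combined with the variational description of the sharp constants. By Proposition~\ref{prop:sharp-constants} (and its proof),
\[
C^N_{p,q}(\gamma_a)=\sup_{\nabla u\not\equiv0}\frac{\inf_{c}\|u-c\|_{L^q(\Omega,\gamma_a)}}{\|\nabla u\|_{L^p(\Omega)}},
\qquad
C^{St}_{p,q}(\beta)=\sup_{\nabla u\not\equiv0}\frac{\inf_{c}\|Tu-c\|_{L^q(\partial\Omega,\beta)}}{\|\nabla u\|_{L^p(\Omega)}},
\]
both suprema being taken over the same class \(u\in W^{1,p}(\Omega)\) with \(\nabla u\not\equiv0\). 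If \(\nabla u\equiv0\), then \(u\) is constant on the connected set \(\Omega\), so both quotient seminorms vanish and such \(u\) do not affect either supremum.

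\textbf{Step 1: the constants.} For fixed \(u\) with \(\nabla u\not\equiv0\), Lemma~\ref{lem:quotient-seminorm-comparison} gives
\[
\frac{\inf_c\|u-c\|_{L^q(\Omega,\gamma_a)}}{\|\nabla u\|_{L^p}}\le \frac{\inf_c\|Tu-c\|_{L^q(\partial\Omega,\beta)}}{\|\nabla u\|_{L^p}}+C_sa^s\le C^{St}_{p,q}(\beta)+C_sa^s .
\]
Taking the supremum over \(u\) yields \(C^N_{p,q}(\gamma_a)\le C^{St}_{p,q}(\beta)+C_sa^s\). The symmetric argument yields \(C^{St}_{p,q}(\beta)\le C^N_{p,q}(\gamma_a)+C_sa^s\). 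Together these give the claimed estimate \(|C^N_{p,q}(\gamma_a)-C^{St}_{p,q}(\beta)|\le C_sa^s\), and hence convergence as \(a\to0\). The constant \(C_s\) is the one from Lemma~\ref{lem:quotient-seminorm-comparison}, and it is independent of \(a\in(0,1]\).

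\textbf{Step 2: the eigenvalues.} Set \(A=C^{St}_{p,q}(\beta)>0\), which is positive and finite by Propositions~\ref{prop:first-steklov} and~\ref{prop:sharp-constants}, and set \(A_a=C^N_{p,q}(\gamma_a)\). Choose \(a\) small enough that \(C_sa^s\le A/2\). Then \(A_a\ge A/2\), so \(A_a\in[A/2,2A]\). By Proposition~\ref{prop:sharp-constants}, \(\Lambda=t^{-p}\) with \(t\) the corresponding constant. The mean value theorem for \(f(t)=t^{-p}\) on the interval between \(A_a\) and \(A\) gives
\[
|\Lambda^N_{p,q}(\gamma_a)-\Lambda^{St}_{p,q}(\beta)|\le p\,\xi^{-p-1}|A_a-A|,
\]
for some \(\xi\ge A/2\). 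Hence \(\xi^{-p-1}\le 2^{p+1}A^{-p-1}\), and so
\[
|\Lambda^N_{p,q}(\gamma_a)-\Lambda^{St}_{p,q}(\beta)|\le p\,2^{p+1}A^{-p-1}C_sa^s=C^\Lambda_sa^s .
\]
Convergence of the eigenvalues follows immediately.

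There is no real obstacle here, since the analytic work is in Lemma~\ref{lem:quotient-seminorm-comparison}. The only points needing care are the following:
\begin{itemize}
\item Both suprema must run over the same class with the same denominator. This is why the comparison lemma is stated with \(\|\nabla u\|_{L^p(\Omega)}\) on the right-hand side.
\item The eigenvalue estimate must be restricted to small \(a\), so that \(A_a\) stays bounded away from zero. This is the reason for the phrase ``for all sufficiently small \(a\)'' in the statement, as opposed to the estimate for the constants, which holds for all \(a\in(0,1]\).
\end{itemize}
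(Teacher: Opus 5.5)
Your proposal is correct and follows the paper's proof essentially line by line. The two one-sided bounds for the sharp constants come from Lemma~\ref{lem:quotient-seminorm-comparison} after dividing by $\|\nabla u\|_{L^p(\Omega)}$ and taking suprema, and the eigenvalue estimate follows from $\Lambda=C^{-p}$ together with the Lipschitz bound $p\,(A/2)^{-p-1}$ for $t\mapsto t^{-p}$ on $[A/2,\infty)$, where $A=C^{St}_{p,q}(\beta)$; the only cosmetic difference is that you make the smallness threshold explicit ($C_s a^s\le A/2$) and invoke the mean value theorem, where the paper obtains $a_0$ from the convergence of the constants.
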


\begin{proof}
By Lemma~\ref{lem:quotient-seminorm-comparison}, for every
\(s\in(0,\delta_{p,q})\) and for every \(u\in W^{1,p}(\Omega)\),
\[
\inf_{c\in\mathbb R}\|u-c\|_{L^q(\Omega,\gamma_a)}\le \inf_{c\in\mathbb R}\|Tu-c\|_{L^q(\partial\Omega,\beta)}+C_s a^s\|\nabla u\|_{L^p(\Omega)},
\]
and
\[
\inf_{c\in\mathbb R}\|Tu-c\|_{L^q(\partial\Omega,\beta)}\le \inf_{c\in\mathbb R}\|u-c\|_{L^q(\Omega,\gamma_a)}+C_s a^s\|\nabla u\|_{L^p(\Omega)}.
\]
Dividing by \(\|\nabla u\|_{L^p(\Omega)}\) and taking the supremum over all
\(u\in W^{1,p}(\Omega)\) with \(\nabla u\not\equiv0\), we obtain
\[
C^N_{p,q}(\gamma_a)\le C^{St}_{p,q}(\beta)+C_s a^s
\]
and
\[
C^{St}_{p,q}(\beta)\le C^N_{p,q}(\gamma_a)+C_s a^s.
\]
Therefore
\[
\left|
C^N_{p,q}(\gamma_a)-C^{St}_{p,q}(\beta)
\right|
\le
C_s a^s
\qquad\text{for every }a\in(0,1].
\]
In particular,
\[
C^N_{p,q}(\gamma_a)\to C^{St}_{p,q}(\beta)
\qquad\text{as }a\to0.
\]

It remains to pass from the sharp constants to the variational levels. By
Proposition~\ref{prop:sharp-constants},
\[
\Lambda^N_{p,q}(\gamma_a)=\bigl(C^N_{p,q}(\gamma_a)\bigr)^{-p},
\qquad
\Lambda^{St}_{p,q}(\beta)=\bigl(C^{St}_{p,q}(\beta)\bigr)^{-p}.
\]
Since \(C^{St}_{p,q}(\beta)>0\) and
\(C^N_{p,q}(\gamma_a)\to C^{St}_{p,q}(\beta)\), there exists \(a_0\in(0,1]\)
such that
\[
C^N_{p,q}(\gamma_a)\ge \frac12\,C^{St}_{p,q}(\beta)
\qquad\text{for all }a\in(0,a_0].
\]
The function \(t\mapsto t^{-p}\) is Lipschitz on \([\frac12\,C^{St}_{p,q}(\beta),\infty)\)
Therefore, for all sufficiently small \(a>0\),
\[
\left|
\Lambda^N_{p,q}(\gamma_a)-\Lambda^{St}_{p,q}(\beta)
\right|
\le
p\,2^{p+1}\bigl(C^{St}_{p,q}(\beta)\bigr)^{-p-1}
\left|
C^N_{p,q}(\gamma_a)-C^{St}_{p,q}(\beta)
\right|.
\]
Combining this with the already proved estimate for the sharp constants, we complete the proof.
\end{proof}

\begin{theorem}[Compactness and convergence of minimizers]
\label{thm:convergence-of-minimizers}
Let \(1<p<n\), \(1<q<\frac{p(n-1)}{n-p}\), and let
\(\Omega\subset\mathbb R^n\) be admissible. Let \(a_k\to0\), and let
\(u_k\) be minimizers for \(\Lambda^N_{p,q}(\gamma_{a_k})\), normalized by
\[
\int_\Omega |u_k|^q\,\gamma_{a_k}\,dy=1.
\]
Then, after passing to a subsequence,
\[
u_k\to u
\qquad\text{strongly in }W^{1,p}(\Omega),
\]
where \(u\) is a minimizer for \(\Lambda^{St}_{p,q}(\beta)\), normalized by
\[
\int_{\partial\Omega}|Tu|^q\,\beta\,dS=1.
\]
\end{theorem}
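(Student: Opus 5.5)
The proof will proceed in four steps: uniform bounds, compactness, identification of the limit, and upgrade to strong convergence.

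\emph{Step 1: uniform bounds.} By Theorem~\ref{thm:quantitative-neumann-to-steklov},
\[
\int_\Omega|\nabla u_k|^p\,dy=\Lambda^N_{p,q}(\gamma_{a_k})\to\Lambda^{St}_{p,q}(\beta),
\]
so the gradients are bounded in \(L^p(\Omega)\). The Friedrichs constant \(C_a\) of Proposition~\ref{prop:friedrichs-neumann} is not uniform in \(a\), so it cannot control \(\|u_k\|_{L^p}\) directly. Instead I would write \(u_k=(u_k-m_k)+m_k\) with \(m_k:=\fint_B u_k\circ\varphi\,dx\). By Theorem~\ref{comp_w}, the Poincar\'e inequality on \(B\), and Corollary~\ref{cor:compact-Lp} (Hölder with \(J_\varphi\in L^r\)), \(\|u_k-m_k\|_{W^{1,p}(\Omega)}\) is bounded. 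It remains to bound \(m_k\). Set \(w_k:=u_k-m_k\). Then
\[
\|w_k\|_{L^q(\Omega,\gamma_{a_k})}=\|w_k\circ\varphi\|_{L^q(B,\mu_{a_k})}\le\|T_B(w_k\circ\varphi)\|_{L^q(\partial B,\alpha)}+C a_k^s,
\]
by Proposition~\ref{prop:quantitative-ball-trace-approx} and the polar-coordinate identity for \(\mathcal R\). This is bounded uniformly by the trace theorem. Since
\[
1=\|w_k+m_k\|_{L^q(\gamma_{a_k})}\ge |m_k|A_\alpha^{1/q}-\|w_k\|_{L^q(\gamma_{a_k})},
\]
the sequence \(m_k\) is bounded, and hence \(u_k\) is bounded in \(W^{1,p}(\Omega)\).

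\emph{Step 2: compactness and identification of the limit.} Pass to a subsequence with \(u_k\rightharpoonup u\) weakly in \(W^{1,p}(\Omega)\), and \(Tu_k\to Tu\) strongly in \(L^q(\partial\Omega,\beta)\) by the compactness of \(T\) (Theorem~\ref{thm:induced-trace-operator} with Lemma~\ref{lem:bounded-weight-embedding}). Here the limit of \(Tu_k\) is identified with \(Tu\) because compact operators map weakly convergent sequences to strongly convergent ones. Lemma~\ref{lem:omega-concentration} then gives
\[
1=\int_\Omega|u_k|^q\gamma_{a_k}\,dy\to\int_{\partial\Omega}|Tu|^q\beta\,dS,
\qquad
0=\int_\Omega|u_k|^{q-2}u_k\gamma_{a_k}\,dy\to\int_{\partial\Omega}|Tu|^{q-2}Tu\,\beta\,dS.
\]
So \(u\in V^{St}_{p,q}(\beta)\) and it is normalized. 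Weak lower semicontinuity gives
\[
\int_\Omega|\nabla u|^p\,dy\le\liminf_k\Lambda^N_{p,q}(\gamma_{a_k})=\Lambda^{St}_{p,q}(\beta),
\]
while the definition of \(\Lambda^{St}_{p,q}(\beta)\) gives the reverse inequality. Hence \(u\) is a normalized Steklov minimizer, and moreover \(\|\nabla u_k\|_{L^p}\to\|\nabla u\|_{L^p}\).

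\emph{Step 3: strong convergence.} The gradients converge weakly in \(L^p\) with convergence of norms. Since \(L^p(\Omega;\mathbb R^n)\) is uniformly convex (Radon--Riesz), \(\nabla u_k\to\nabla u\) strongly. Strong convergence \(u_k\to u\) in \(L^p(\Omega)\) follows from the compact embedding of Corollary~\ref{cor:compact-Lp}. Together these give convergence in \(W^{1,p}(\Omega)\).

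The main obstacle is Step 1, namely the uniform \(W^{1,p}\) bound without a uniform Friedrichs inequality. The mean-value decomposition combined with the quantitative radial approximation is the tool I would rely on to obtain it.
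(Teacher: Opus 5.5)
Your proof is correct, and Steps 2 and 3 match the paper's argument almost verbatim. The only real difference is how you bound the constants $m_k$ in Step 1.

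The paper takes $m_k=\fint_\Omega u_k\,dy$ and uses the Poincar\'e inequality on $\Omega$, which follows from the compactness in Corollary~\ref{cor:compact-Lp}. It then rules out $|m_k|\to\infty$ by contradiction, applying Lemma~\ref{lem:omega-concentration} to $z_k=u_k/|m_k|\to\pm1$.

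You normalize on the ball instead, with $m_k=\fint_B u_k\circ\varphi\,dx$, so that $u_k\circ\varphi-m_k$ has mean zero. Then Proposition~\ref{prop:quantitative-ball-trace-approx} and the trace theorem bound $\|u_k-m_k\|_{L^q(\Omega,\gamma_{a_k})}$ uniformly. This yields the explicit estimate $|m_k|\,A_\alpha^{1/q}\le 1+\sup_k\|u_k-m_k\|_{L^q(\Omega,\gamma_{a_k})}$.

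Your route is direct and quantitative, while the paper's needs only the qualitative concentration lemma.

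One small precision in your H\"older step: you need $u_k\circ\varphi-m_k$ bounded in $L^{pr'}(B)$. That comes from the Poincar\'e inequality combined with the Sobolev embedding on $B$, as in the proof of Corollary~\ref{cor:compact-Lp}, not from the $L^p$-Poincar\'e inequality alone.
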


\begin{proof}
By the normalization and the variational characterization of the minimizers,
\[
\int_\Omega |\nabla u_k|^p\,dy
=
\Lambda^N_{p,q}(\gamma_{a_k})
\qquad\text{for all }k.
\]
By Theorem~\ref{thm:quantitative-neumann-to-steklov},
\[
\Lambda^N_{p,q}(\gamma_{a_k})\to \Lambda^{St}_{p,q}(\beta),
\]
hence \(\{\nabla u_k\}\) is bounded in \(L^p(\Omega)\).

We claim that \(\{u_k\}\) is bounded in \(W^{1,p}(\Omega)\). Set
\[
m_k:=\fint_\Omega u_k\,dy,
\qquad
v_k:=u_k-m_k.
\]
Then
\[
\fint_\Omega v_k\,dy=0,
\qquad
\|\nabla v_k\|_{L^p(\Omega)}
=
\|\nabla u_k\|_{L^p(\Omega)}.
\]
By Corollary \ref{cor:compact-Lp}, we have Poincar\'e inequality on \(\Omega\),
\[
\|v_k\|_{L^p(\Omega)}
\le
C(\Omega) \|\nabla v_k\|_{L^p(\Omega)},
\]
so \(\{v_k\}\) is bounded in \(W^{1,p}(\Omega)\).

It remains to show that \(\{m_k\}\) is bounded. Suppose, to the contrary, that
\(|m_k|\to\infty\) along a subsequence. Define
\[
z_k:=\frac{u_k}{|m_k|}
=
\frac{m_k}{|m_k|}+\frac{v_k}{|m_k|}.
\]
Since \(\{v_k\}\) is bounded in \(W^{1,p}(\Omega)\), after passing to a further
subsequence we may assume that
\[
\frac{m_k}{|m_k|}\to s\in\{-1,1\},
\]
and therefore
\[
z_k\to s
\qquad\text{strongly in }W^{1,p}(\Omega).
\]
By continuity of the induced trace operator,
\[
Tz_k\to s
\qquad\text{strongly in }L^q(\partial\Omega,\beta).
\]
Applying Lemma~\ref{lem:omega-concentration} to the bounded sequence \(\{z_k\}\),
we obtain
\[
\int_\Omega |z_k|^q\,\gamma_{a_k}\,dy
\longrightarrow
\int_{\partial\Omega}\beta\,dS
>0.
\]
On the other hand,
\[
\int_\Omega |z_k|^q\,\gamma_{a_k}\,dy
=
\frac{1}{|m_k|^q}
\int_\Omega |u_k|^q\,\gamma_{a_k}\,dy
=
\frac{1}{|m_k|^q}\to0,
\]
a contradiction. Hence \(\{m_k\}\) is bounded, and therefore \(\{u_k\}\) is
bounded in \(W^{1,p}(\Omega)\).

Passing to a subsequence, we may assume that
\[
u_k\rightharpoonup u
\qquad\text{weakly in }W^{1,p}(\Omega).
\]
By Corollary~\ref{cor:compact-Lp},
\[
u_k\to u
\qquad\text{strongly in }L^p(\Omega),
\]
and by Theorem~\ref{thm:induced-trace-operator}, after passing to a further
subsequence if necessary,
\[
Tu_k\to Tu
\qquad\text{strongly in }L^q(\partial\Omega,\beta).
\]

Since \(u_k\in V^N_{p,q}(\gamma_{a_k})\), we have
\[
\int_\Omega |u_k|^{q-2}u_k\,\gamma_{a_k}\,dy=0
\qquad\text{for all }k.
\]
Applying Lemma~\ref{lem:omega-concentration} to \(\{u_k\}\), we obtain
\[
\int_{\partial\Omega}|Tu|^q\,\beta\,dS
=
\lim_{k\to\infty}\int_\Omega |u_k|^q\,\gamma_{a_k}\,dy
=
1,
\]
and
\[
\int_{\partial\Omega}|Tu|^{q-2}Tu\,\beta\,dS
=
\lim_{k\to\infty}\int_\Omega |u_k|^{q-2}u_k\,\gamma_{a_k}\,dy
=
0.
\]
Thus \(u\in V^{St}_{p,q}(\beta)\), and \(u\) is normalized by
\[
\int_{\partial\Omega}|Tu|^q\,\beta\,dS=1.
\]

By weak lower semicontinuity,
\[
\int_\Omega |\nabla u|^p\,dy
\le
\liminf_{k\to\infty}\int_\Omega |\nabla u_k|^p\,dy
=
\liminf_{k\to\infty}\Lambda^N_{p,q}(\gamma_{a_k})
=
\Lambda^{St}_{p,q}(\beta),
\]
where in the last step we used
Theorem~\ref{thm:quantitative-neumann-to-steklov}. Since \(u\in V^{St}_{p,q}(\beta)\)
and
\[
\int_{\partial\Omega}|Tu|^q\,\beta\,dS=1,
\]
the variational definition of \(\Lambda^{St}_{p,q}(\beta)\) yields
\[
\Lambda^{St}_{p,q}(\beta)
\le
\int_\Omega |\nabla u|^p\,dy.
\]
Therefore
\[
\int_\Omega |\nabla u|^p\,dy
=
\Lambda^{St}_{p,q}(\beta),
\]
so \(u\) is a minimizer for \(\Lambda^{St}_{p,q}(\beta)\).

Finally,
\[
\int_\Omega |\nabla u_k|^p\,dy
=
\Lambda^N_{p,q}(\gamma_{a_k})
\to
\Lambda^{St}_{p,q}(\beta)
=
\int_\Omega |\nabla u|^p\,dy.
\]
Since \(\nabla u_k\rightharpoonup \nabla u\) weakly in \(L^p(\Omega)\), the
uniform convexity of \(L^p(\Omega)\) implies
\[
\nabla u_k\to \nabla u
\qquad\text{strongly in }L^p(\Omega).
\]
Together with the already established strong convergence
\[
u_k\to u
\qquad\text{in }L^p(\Omega),
\]
this gives
\[
u_k\to u
\qquad\text{strongly in }W^{1,p}(\Omega).
\]
This completes the proof.
\end{proof}

\begin{corollary}[Unweighted boundary limit]
\label{cor:unweighted-boundary-limit}
Let \(1<p<n\), \(1<q<\frac{p(n-1)}{n-p}\), and let
\(\Omega\subset\mathbb R^n\) be admissible. Assume that the boundary volume
derivative of the inverse boundary mapping satisfies
\[
0<c\le J^\partial_{\varphi^{-1}}(t)\le C<\infty
\qquad\text{for }H^{n-1}\text{-a.e. }t\in\partial\Omega.
\]
Define
\[
\alpha(s):=\frac{1}{J^\partial_{\varphi^{-1}}(\varphi_\partial(s))},
\qquad s\in\partial B,
\]
Then
\[
C^N_{p,q}(\gamma_a)\to C^{St}_{p,q}(1),
\qquad
\Lambda^N_{p,q}(\gamma_a)\to \Lambda^{St}_{p,q}(1)
\qquad\text{as }a\to0.
\]
Moreover, if \(a_k\to0\) and \(u_k\in V^N_{p,q}(\gamma_{a_k})\) are normalized minimizers for
\(\Lambda^N_{p,q}(\gamma_{a_k})\) then, after passing to a subsequence,
\[
u_k\to u
\qquad\text{strongly in }W^{1,p}(\Omega),
\]
where \(u\in V^{St}_{p,q}(1)\) is a normalized minimizer for \(\Lambda^{St}_{p,q}(1)\).
\end{corollary}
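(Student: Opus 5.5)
The corollary is a specialization of Theorem~\ref{thm:quantitative-neumann-to-steklov} and Theorem~\ref{thm:convergence-of-minimizers}. The plan is to check that the particular choice of $\alpha$ is an admissible boundary weight in the sense of Definition~\ref{def:admissible-weights}, and that the induced boundary weight $\beta$ is identically $1$ on $\partial\Omega$.

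First, admissibility of $\alpha$. Since $0<c\le J^\partial_{\varphi^{-1}}\le C$ holds $\mathcal H^{n-1}$-a.e.\ on $\partial\Omega$, the function $\alpha=1/(J^\partial_{\varphi^{-1}}\circ\varphi_\partial)$ satisfies $C^{-1}\le\alpha\le c^{-1}$ outside the set $\varphi_\partial^{-1}(E)$, where $E\subset\partial\Omega$ is the exceptional $\mathcal H^{n-1}$-null set. By the Luzin $N^{-1}$-property (A3), $\sigma(\varphi_\partial^{-1}(E))=0$. Hence $\alpha$ is well defined $\sigma$-a.e.\ and satisfies $\alpha\in L^\infty(\partial B)$, $\alpha\ge0$ and $\alpha\not\equiv0$. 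Measurability of $\alpha$ follows because $J^\partial_{\varphi^{-1}}$ is measurable and $\varphi_\partial$ is a homeomorphism, so it is Borel.

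Second, the identification $\beta\equiv1$. By Definition~\ref{def:admissible-weights},
\[
\beta(t)=\alpha(\varphi_\partial^{-1}(t))\,J^\partial_{\varphi^{-1}}(t)=\frac{J^\partial_{\varphi^{-1}}(t)}{J^\partial_{\varphi^{-1}}(\varphi_\partial(\varphi_\partial^{-1}(t)))}=1
\]
for $\mathcal H^{n-1}$-a.e.\ $t\in\partial\Omega$. Thus $L^q(\partial\Omega,\beta)=L^q(\partial\Omega)$ with the surface measure, and $C^{St}_{p,q}(\beta)=C^{St}_{p,q}(1)$, $\Lambda^{St}_{p,q}(\beta)=\Lambda^{St}_{p,q}(1)$, $V^{St}_{p,q}(\beta)=V^{St}_{p,q}(1)$. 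The bulk weights $\gamma_a$ are the ones generated by this $\alpha$ through the same definition.

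Finally, both theorems hold for every admissible $\alpha$, and their constants depend on $\alpha$ only through $\|\alpha\|_{L^\infty}\le c^{-1}$. Theorem~\ref{thm:quantitative-neumann-to-steklov} then gives $C^N_{p,q}(\gamma_a)\to C^{St}_{p,q}(1)$ and $\Lambda^N_{p,q}(\gamma_a)\to\Lambda^{St}_{p,q}(1)$, in fact at rate $a^s$ for each $s\in(0,\delta_{p,q})$. Theorem~\ref{thm:convergence-of-minimizers} gives strong $W^{1,p}$ convergence of a subsequence of normalized minimizers to a normalized minimizer in $V^{St}_{p,q}(1)$.

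The only point needing real care is the a.e.\ bookkeeping in the first step: null sets for $dS$ must correspond to null sets for $d\sigma$, so that $\alpha$ is defined $\sigma$-a.e.\ and the identity $\beta=1$ holds $dS$-a.e. Property (A3) handles the first direction. For the second, nothing more is needed, since $\beta$ only enters integrals against $dS$.
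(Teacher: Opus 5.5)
Your proposal is correct and follows the paper's own argument: check that $\alpha\in L^\infty(\partial B)$ is nonnegative and nontrivial, compute $\beta\equiv1$ $\mathcal H^{n-1}$-a.e., and invoke Theorems~\ref{thm:quantitative-neumann-to-steklov} and~\ref{thm:convergence-of-minimizers}. Your explicit use of (A3) to show that $\alpha$ is well defined and bounded $\sigma$-a.e. is a welcome piece of bookkeeping that the paper leaves implicit.
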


\begin{proof}
By the two-sided bound on \(J^\partial_{\varphi^{-1}}\), the function
\(\alpha\) belongs to \(L^\infty(\partial B)\) and is nonnegative and
nontrivial. By the definition of the induced boundary weight,
\[
\beta(t)
=
\alpha(\varphi_\partial^{-1}(t))\,J^\partial_{\varphi^{-1}}(t).
\]
Substituting the above choice of \(\alpha\), we obtain
\[
\beta(t)
=
\frac{1}{J^\partial_{\varphi^{-1}}(\varphi_\partial(\varphi_\partial^{-1}(t)))}
\,J^\partial_{\varphi^{-1}}(t)
=
1
\]
for \(H^{n-1}\)-a.e. \(t\in\partial\Omega\).

The stated convergence of the sharp constants and eigenvalues now follows from
Theorem~\ref{thm:quantitative-neumann-to-steklov}, while the convergence of
minimizers follows from Theorem~\ref{thm:convergence-of-minimizers}.
\end{proof}

\paragraph{Acknowledgement.}
The author is grateful to Prof. Alexander Ukhlov for introducing him to spectral theory and for his lasting influence on the author's mathematical development.

\bibliographystyle{amsplain}
\bibliography{Neumann_to_Steklov}

\vskip 0.3cm

Alexander Menovschikov; Department of Mathematics, HSE University, Moscow, Russia
 
\emph{E-mail address:} \email{menovschikovmath@gmail.com}

\end{document}